\documentclass[smallcondensed]{svjour3}     
\usepackage[utf8]{inputenc}
\usepackage{amssymb, amsmath}
\usepackage[english]{babel}
\usepackage{hyperref}
\usepackage{xcolor}

\allowdisplaybreaks

\newcommand{\F}{\mathbb{F}}

\newcommand{\R}{\mathbb{R}}

\newcommand{\Image}{\operatorname{Im}}
\DeclareMathOperator{\Tr}{Tr}

\smartqed  

\begin{document}

\title{Image sets of perfectly nonlinear maps}


\author{Lukas K\"olsch        \and
        Bj\"orn Kriepke \and \\
				Gohar M. Kyureghyan
}


\institute{{Lukas K\"olsch, Bj\"orn Kriepke, Gohar M. Kyureghyan} \at
							University of Rostock, Institute of Mathematics
              Ulmenstrasse 69, 18057 Rostock \\
              \email{\{lukas.koelsch,bjoern.kriepke,gohar.kyureghyan\}@uni-rostock.de}           
}

\date{Received: date / Accepted: date}

\maketitle

\begin{abstract}
We consider  image sets  of  differentially $d$-uniform maps
of finite fields. We present a lower bound on the image size of such maps and
study their preimage distribution, by extending
methods used for planar maps. 
We apply the results to study $d$-uniform Dembowski-Ostrom polynomials.
Further, we focus on a particularly interesting case  of APN maps on binary fields $\F_{2^n}$. 
We show that APN maps with the minimal image size must have a very special preimage distribution. 
We prove that for an even $n$ the image sets of several well-studied families of APN maps are minimal.
We present results connecting the image sets of special maps with their Walsh spectrum. 
Especially, we show that the fact that several  large classes of APN maps have the classical Walsh spectrum
is explained by the minimality of their image sets.
Finally, we present upper bounds on the image size of APN maps.
 For  a non-bijective almost bent map $f$, these results imply
 $\frac{2^n+1}{3}+1 \leq |\Image(f)| \leq 2^n-2^{(n-1)/2}$.

\keywords{Image set \and APN map \and differential uniformity \and Walsh spectrum \and quadratic map 
\and Dembowski-Ostrom polynomial \and plateaued function \and preimage distribution}
\end{abstract}
%
%
%
%
%

\section{Introduction}

Let $p$ be a prime and $q=p^n$. A map $f:\F_q \to \F_q$ is called differentially $d$-uniform (abbreviated $d$-uniform), 
if
\[
	d=\max_{a\neq 0,b\in\F_q} |\{x\in\F_q: f(x+a)-f(x)=b\}|.
\]
A 1-uniform map $f:\F_q \to \F_q$ is called planar, that is $f$ is planar if $f(x+a)-f(x)$ is a permutation for any  $a\in\F_q^*$. Planar maps exist if and only if  $q$ is odd. A map $f$ is called almost perfect nonlinear (APN) if  $f$ is 2-uniform. 
Observe that if $q$ is even, then an equation $f(x+a)+f(x)=b$ has always an even number of solutions,
since $x$ solves it if and only if $x+a$ does so. 
In particular  there are no 1-uniform
maps for $q$ even, and the APN maps have the smallest possible uniformity on binary fields. APN maps and more generally maps in characteristic $2$ with low uniformity are an important research object in cryptography, mainly because they provide good resistance to differential attacks when used as an S-box of a block cipher. For a thorough survey detailing the importance of such maps for cryptography, we refer to~\cite{nybergsurvey}. Moreover, maps with low uniformity are intimately connected to certain codes~\cite{cczpaper,carletdingcodes}. Planar maps can be used for the construction of various structures in combinatorics and algebra, for example difference sets, projective planes and semifields~\cite{pottsurvey}. \\

\noindent
A celebrated result of Ding and Yuan obtained in \cite{ding2006diffset} shows that  image sets of planar maps yield  
skew  Hadamard difference sets which are inequivalent to the Paley–Hada\-mard difference sets. This  disproved  a longstanding conjecture on the
classification of skew Hadamard difference sets and  motivated an interest to a better understanding of
image sets of planar maps, see for example \cite{coulter2011number,kyureghyan2008some,weng2007pseudo}.
The image sets of  $d$-uniform maps with $d>1$  can be used to construct optimal combinatorial objects 
too, as shown in \cite{carlet2016quadratic}. However  the case $d>1$ is less studied compared to $d=1$,
although also $d=1$ is far from being completely understood too. 
Here we extend some of results on the image sets of planar maps with $d=1$ to cover a general $d$. 
The behavior of the image sets of $d$-uniform maps and  proofs  are more
complex  for $d>1$. This is simply explained by the fact that the preimage distribution
of a difference map $f_a: x\mapsto f(x+a)-f(x)$ is not unique and more difficult to control
when $d>1$. The smaller  values  $d$ are easier to handle.\\

\noindent
In this paper we obtain a lower bound for the size of $d$-uniform maps, which is sharp for several classes of $d$-uniform maps. However there are cases, where
we expect  that our bound can be improved. We prove several results on the preimage distribution of $d$-uniform maps.
We observe that some classes of $d$-uniform Dembowski-Ostrom polynomials are uniquely characterized by the size of their image set.
Further we consider in more detail the case $d=2$, that is APN maps, on binary fields. For an APN map $f:\F_{2^n} \to \F_{2^n}$
the lower bound is
\[
		|\Image(f)|\geq 
		\begin{cases}
			\frac{2^n+1}{3} & n \text{ is odd,} \\
			\frac{2^n+2}{3} & n \text{ is even}.
		\end{cases}
	\]
The first published proof for this bound appears  in \cite[Lemma 5]{carlet-heuser-picek}, where a lower bound on the differential uniformity via image set size is presented. 
 Since the study of image sets of APN maps
was not a goal of \cite{carlet-heuser-picek},  the lower bound in it remained unnoticed by most of researchers on APN maps. A systematic
study of the image sets of APN maps is originated  in \cite{czerwinski2020minimal}. Beside the lower bound  in \cite{czerwinski2020minimal}
several  properties and examples of the  image sets of APN maps are presented. In this
paper we develop the study of image sets of APN maps further. Our results indicate 
that the APN maps with minimal image size 
 play a major role for understanding fundamental
properties of APN maps. We believe
 that  a deeper  analysis of the image sets  of APN maps is an interesting research direction which  will allow
to progress in several of current challenges  on APN maps.  
\\

\noindent
Presently, the only known primary families of APN maps are monomials $x\mapsto x^k$ or  Dembowski-Ostrom polynomials. These maps serve as a basis for a handful known secondary constructions of
APN maps \cite{nybergsurvey,pottsurvey}. 
Whereas the image sets of monomial maps are multiplicative subgroups extended with the zero element
and so they are uniquely determined by $\gcd(q-1,k)$,
the behavior of the image sets of Dembowski-Ostrom polynomials is complex and not very well understood yet.
{Results from \cite{carlet2016quadratic} imply that if $n$ is even, then 
 APN Dembowski-Ostrom polynomials of shape $f(x^3)$ 
have the minimal size $(2^n+2)/3$.
For an odd $n$,  the image size of  APN Dembowski-Ostrom polynomials with exponents divisible by $3$ (as integers) is not unique.
We present such families with image sizes $2^n, 2^{n-1}$ and $5\cdot2^{n-3}$ in this paper. }\\

\noindent
At beginning of our studies we were quite certain that having an image set of minimal size
is a  rare property and  not many of known  APN maps will satisfy it.
 Despite our intuition we found that  APN maps constructed by Zhou-Pott and their generalizations suggested
by  G\"olo\u{g}lu are such maps. This is quite remarkable since  these families contain large number
of inequivalent maps as shown in \cite{kaspers}.\\

\noindent
{The set of component-wise plateaued maps includes quadratic maps, and hence also Dembowski-Ostrom polynomials.
For $n$ even,  component-wise plateaued maps with certain preimage distribution 
have a very special  Walsh spectrum. For APN maps this result implies that almost-$3$-to-$1$
component-wise plateaued maps have the classical Walsh spectrum, as observed in \cite{carletplateaued}.
This combined with  the knowledge on the behavior of image sets explains why several important
families of APN maps have 
the classical Walsh spectrum.}
For $n$ odd we find a direct connection between the image set of an almost bent map and the number  of its balanced component functions. As a consequence, we show that any almost bent map has a balanced component function.
We conclude our paper with an upper bound on the image size of non-bijective almost bent maps and component-wise plateaued APN maps. To our knowledge these are the only currently known non-trivial
upper bounds on the image size of APN maps.\\

\section{Images of $d$-uniform functions}\label{general}

In this
section we extend some of the results from  \cite{coulter2011number,kyureghyan2008some,weng2007pseudo} on the image sets of planar maps with $d=1$ to cover a general $d$. 
The behavior of the image sets of $d$-uniform maps and the proofs  are more
complex  for $d>1$. This is simply explained by the fact that the preimage distribution
of a difference map $f_a: x\mapsto f(x+a)-f(x)$ is not unique and more difficult to control
when $d>1$. \\

\noindent
Let $\Image(f)$  be the image set of a map $f:\F_q \to \F_q$. For $r\geq 1$ we denote by  $M_r(f)$  the number of $y\in\F_q$ with exactly $r$ preimages. Further, let $N(f)$ denote the number of pairs $(x,y)\in\F_q^2$, such that $f(x)=f(y)$.
Note $N(f)\geq q$ and
$N(f) =q$ exactly when $f$ is a permutation on $\F_q$.  Let $m$ be the degree of the map $f$, that is the degree
of its polynomial representation of degree not exceeding $q-1$. Then $M_r(f) = 0$ for every $r>m$. 
The following identities follow directly from the definition of $M_r(f)$ and $N(f)$
\begin{align}
	\sum_{r=1}^m M_r(f) &= |\Image(f)| \label{eq_Mr}\\
	\sum_{r=1}^m rM_r(f) &= q \label{eq_rMr}\\
	\sum_{r=1}^m r^2M_r(f) &= N(f) \label{eq_r2Mr}.
\end{align}
The quantities  $M_r$ and $N(f)$ appear naturally when studying the image sets of maps on finite fields, 
see for example \cite{carletdingnonlinearities,coulter2011number,kyureghyan2008some}.  
A map $f:\F_q \to \F_q$ is called $k$-to-1, if every element in the image of $f$ has exactly $k$ preimages, that is
if $M_r(f) =0$ for any $0<r\neq k$.

\begin{lemma}\label{Lem_Cauchy_Schwarz}
	Any map $f:\F_q\to\F_q$ fulfills \[|\Image(f)|\geq\frac{q^2}{N(f)},\]
	with equality  if and only if $f$ is $k$-to-1.
\end{lemma}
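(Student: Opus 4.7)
The plan is to derive the inequality as a direct application of the Cauchy--Schwarz inequality to the two identities
\[
\sum_{r=1}^{m} r\,M_r(f) = q, \qquad \sum_{r=1}^{m} r^2 M_r(f) = N(f),
\]
combined with $\sum_r M_r(f) = |\Image(f)|$. The key idea is to split the summand $rM_r(f)$ as $\sqrt{M_r(f)} \cdot \bigl(r\sqrt{M_r(f)}\bigr)$.

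Concretely, I would first square the identity for $q$ and apply Cauchy--Schwarz to the decomposition just mentioned:
\[
q^2 = \Bigl(\sum_{r=1}^m \sqrt{M_r(f)}\cdot r\sqrt{M_r(f)}\Bigr)^2 \leq \Bigl(\sum_{r=1}^m M_r(f)\Bigr) \Bigl(\sum_{r=1}^m r^2 M_r(f)\Bigr) = |\Image(f)|\cdot N(f).
\]
Rearranging yields $|\Image(f)| \geq q^2/N(f)$, noting that $N(f) \geq q > 0$ so division is legitimate.

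For the equality characterization I would use the standard equality case of Cauchy--Schwarz: it holds if and only if the vectors $(\sqrt{M_r(f)})_r$ and $(r\sqrt{M_r(f)})_r$ are proportional. Since proportionality at every index $r$ with $M_r(f) \neq 0$ forces $r$ to take a single constant value, this is equivalent to saying that there is exactly one $k$ with $M_k(f) \neq 0$, i.e.\ $f$ is $k$-to-$1$. Conversely, if $f$ is $k$-to-$1$, then $M_k(f)=|\Image(f)|$, $kM_k(f)=q$, and $k^2M_k(f)=N(f)$, so $q^2 = k^2 M_k(f)^2 = |\Image(f)| \cdot N(f)$, giving equality.

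There is no real obstacle here; the only subtle point is being careful about which indices $r$ contribute to the Cauchy--Schwarz sum (only those with $M_r(f) > 0$), so that the equality case is correctly identified as $f$ being $k$-to-$1$ for some single $k$ rather than allowing multiple $k$'s.
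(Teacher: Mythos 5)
Your proposal is correct and follows essentially the same route as the paper: the same Cauchy--Schwarz decomposition $rM_r(f)=\sqrt{M_r(f)}\cdot\bigl(r\sqrt{M_r(f)}\bigr)$ applied to the identities \eqref{eq_Mr}, \eqref{eq_rMr}, \eqref{eq_r2Mr}, with the equality case characterized by proportionality of the two vectors forcing a single $k$ with $M_k(f)\neq 0$. Your added remark about restricting attention to indices with $M_r(f)>0$ and the explicit converse check are fine refinements of the paper's slightly terser treatment of the equality case.
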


\begin{proof}
	It follows from the Cauchy-Schwarz inequality with (\ref{eq_Mr}),~(\ref{eq_rMr}) and (\ref{eq_r2Mr}) that
	\[
		q^2=\left(\sum_{r=1}^m r M_r(f)\right)^2 \leq
		\left(\sum_{r=1}^m r^2M_r(f)\right)\left(\sum_{r=1}^m M_r(f)\right)=N(f)|\Image(f)|.
	\]
	The equality above holds if and only if there is a $k \in \R$ such that $r\sqrt{M_r(f)}=k\sqrt{M_r(f)}$ for all $1\leq r\leq m$, that is $M_k(f)=|\Image(f)|$ and $M_r(f)=0$ for $r\neq k$.
	\qed
\end{proof}

The following proof is an adaption for any $d$ of \cite[Lemma 2]{kyureghyan2008some}, where  planar functions with
$d=1$ were considered.

\begin{lemma}\label{Lem_Nfg}
	Let $f:\F_q\to\F_q$ be $d$-uniform. Then
	\[
		N(f) \leq q+d\cdot t_0(f),
	\]
	where $t_0(f)$ is the number of elements $a\ne 0$ in $\F_q$ for which
	$f(x+a)-f(x)=0$ has a solution $x$ in $\F_q$. The equality holds if and
	only if every of these $t_0(f)$ equations has exactly $d$ solutions. 
\end{lemma}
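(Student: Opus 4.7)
The plan is to count $N(f)$ by introducing the difference $a=y-x$ and then exploit the $d$-uniformity bound term-by-term. Concretely, I would rewrite
\[
N(f) = |\{(x,y)\in\F_q^2 : f(x)=f(y)\}| = \sum_{a\in\F_q}\,|\{x\in\F_q : f(x+a)-f(x)=0\}|,
\]
splitting off the contribution of the diagonal $a=0$, which equals $q$ since every $x\in\F_q$ is a solution when $a=0$.

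Next I would analyze the remaining sum over $a\in\F_q^*$. By the definition of $t_0(f)$, exactly $t_0(f)$ of these nonzero $a$ yield an equation $f(x+a)-f(x)=0$ that has at least one solution; the others contribute $0$. For each of the contributing $a$, the $d$-uniformity of $f$ (applied with $b=0$) gives the upper bound
\[
|\{x\in\F_q : f(x+a)-f(x)=0\}| \leq d.
\]
Summing these inequalities yields $N(f)-q \leq d\cdot t_0(f)$, which is the claimed bound.

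For the equality characterization, the inequality above is tight if and only if each of the $t_0(f)$ contributing summands equals $d$, i.e.\ each of those equations $f(x+a)-f(x)=0$ has exactly $d$ solutions. I do not foresee any real obstacle here: the only subtle point is to correctly separate the $a=0$ term from the others and to recognize that the $d$-uniformity of $f$ bounds only the nonzero solution counts, which is precisely why $t_0(f)$ and not $q-1$ appears as the multiplier of $d$.
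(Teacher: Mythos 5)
Your argument is correct and coincides with the paper's proof: both reparametrize the pairs $(x,y)$ with $f(x)=f(y)$ via $a=y-x$, split off the $a=0$ diagonal contributing $q$, and bound each of the $t_0(f)$ nonzero directions with a solution by $d$ using $d$-uniformity with $b=0$, with equality exactly when each such equation has $d$ solutions. No gaps.
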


\begin{proof}
	Note that
	\begin{align*}
		N(f) &= |\{(u, v)\in\F_q^2: f(u) = f(v)\}| \\
				 &= |\{(u, v)\in\F_q^2: f(u)-f(v) = 0\}| \\
				 &= |\{(a, v)\in\F_q^2: f(v+a)-f(v) = 0\}|.
	\end{align*}
	For $a=0$ every pair $(0,v)$ with $v\in\F_q$ contributes to $N(f)$. If $a\neq 0$, then $f(v+a)-f(v)=0$ has at most $d$ solutions because $f$ is $d$-uniform. Therefore
	\[
		N(f)\leq q+ d\cdot t_0(f).
	\]
		\qed
\end{proof}

Observe that for a planar map $N(f) = 2q-1$, since $f(v+a)-f(v)=0$ has a unique solution for every non-zero $a$.
Generalizing this, a map $f:\F_q \to \F_q$ is called zero-difference $d$-balanced if the equation 
$f(x+a)-f(x)=0$ has exactly $d$ solutions for every non-zero $a$, see \cite{carlet2016quadratic}. Hence $N(f) = q + (q-1)d = (d+1)q-d$ for a zero-difference $d$-balanced map.

\begin{corollary}\label{cor:Nf}
	Let $f:\F_q\to\F_q$ be $d$-uniform. Then
	\[
		N(f) \leq (d+1)q-d.
	\]
	The equality holds if and only if $f$ is zero-difference $d$-balanced.
\end{corollary}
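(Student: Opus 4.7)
The plan is to derive the corollary as an almost immediate consequence of Lemma \ref{Lem_Nfg} by observing that $t_0(f)$ is at most the number of nonzero elements in $\F_q$.

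First I would invoke Lemma \ref{Lem_Nfg} to obtain $N(f) \leq q + d \cdot t_0(f)$. Then I would note the trivial bound $t_0(f) \leq q-1$, since $t_0(f)$ counts certain elements of $\F_q^*$. Combining these two inequalities yields
\[
N(f) \leq q + d(q-1) = (d+1)q - d,
\]
which is the claimed upper bound.

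For the equality characterization, the combined estimate is tight if and only if both constituent inequalities are tight: namely, $t_0(f) = q-1$, meaning the equation $f(x+a) - f(x) = 0$ has at least one solution for every $a \in \F_q^*$, and additionally (by the equality clause in Lemma \ref{Lem_Nfg}) each of these equations has exactly $d$ solutions. Taken together, these two conditions say exactly that $f(x+a) - f(x) = 0$ has exactly $d$ solutions for every $a \neq 0$, which is the definition of a zero-difference $d$-balanced map. Conversely, any zero-difference $d$-balanced map satisfies $t_0(f) = q-1$ and meets the equality condition of Lemma \ref{Lem_Nfg}, so $N(f) = (d+1)q - d$.

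There is really no obstacle here: the corollary is a one-line consequence of the preceding lemma once the trivial bound on $t_0(f)$ is recognized. The only point requiring slight care is tracking both equality conditions simultaneously in the converse direction, but both follow directly from the definition of zero-difference $d$-balanced.
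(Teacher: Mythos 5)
Your proof is correct and follows essentially the same route as the paper: combine Lemma~\ref{Lem_Nfg} with the trivial bound $t_0(f)\leq q-1$, and track both equality conditions to recover the zero-difference $d$-balanced characterization. The paper's own proof is the same one-liner (with the equality case already prepared by the remark preceding the corollary), so there is nothing to add.
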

\begin{proof}
The statement follows from Lemma \ref{Lem_Nfg} and $t_0(f) \leq q-1$.
	\qed
\end{proof}

\begin{remark}
	Note that several of  the results in this paper hold  for any map $f$ with $N(f)\leq (d+1)q-d$,
	and not only for $d$-uniform ones. Some of our proofs can easily be adapted if $N(f)=kq\pm \varepsilon$ is known.
\end{remark}

\begin{theorem}\label{thm:Image_Size1}
	Let $f:\F_q\to\F_q$ be $d$-uniform. Then
	\[
		|\Image(f)|\geq \left\lceil\frac{q}{d+1}\right\rceil.
	\]
\end{theorem}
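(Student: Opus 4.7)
The plan is to chain together the two immediately preceding results. By Lemma~\ref{Lem_Cauchy_Schwarz}, any map satisfies $|\Image(f)| \geq q^2/N(f)$, and by Corollary~\ref{cor:Nf}, the $d$-uniformity hypothesis gives $N(f) \leq (d+1)q - d$. Substituting the second bound into the first yields
\[
|\Image(f)| \;\geq\; \frac{q^2}{(d+1)q - d}.
\]

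The remaining step is to verify that this real-valued lower bound is at least $q/(d+1)$. Clearing denominators reduces this to checking $q^2(d+1) \geq q\bigl((d+1)q - d\bigr)$, which simplifies to $qd \geq 0$ and is obviously true. Since $|\Image(f)|$ is an integer, the bound can then be replaced by its ceiling $\lceil q/(d+1)\rceil$, which is the claimed inequality.

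There is essentially no obstacle here: the whole statement drops out of the two preparatory results, and the only thing to notice is the slight gap between the fractional bound $q^2/((d+1)q-d)$ and the cleaner expression $q/(d+1)$, which is absorbed by the ceiling. One could additionally track the equality conditions from Lemma~\ref{Lem_Cauchy_Schwarz} and Corollary~\ref{cor:Nf} (a simultaneously $k$-to-$1$ and zero-difference $d$-balanced map, with $k = d+1$) to describe when the bound is sharp, but this is not required by the statement.
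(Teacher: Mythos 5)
Your proof is correct and follows essentially the same route as the paper: combine Lemma~\ref{Lem_Cauchy_Schwarz} with Corollary~\ref{cor:Nf} to get $|\Image(f)|\geq q^2/((d+1)q-d)$, observe this real bound is at least $q/(d+1)$, and pass to the ceiling since the image size is an integer. Nothing is missing.
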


\begin{proof}
	With Lemma~\ref{Lem_Cauchy_Schwarz} and Corollary~\ref{cor:Nf} it follows that
	\begin{align}\label{equality}
		|\Image(f)|&\geq \left\lceil\frac{q^2}{N(f)}\right\rceil
			\geq \left\lceil\frac{q^2}{(d+1)q-d}\right\rceil 
			\geq \left\lceil\frac{q}{d+1}\right\rceil.
	\end{align}
		\qed
\end{proof}

The proof of Theorem~\ref{thm:Image_Size1} shows that the gap between  $|\Image(f)|$ and 
 $\left\lceil\frac{q}{d+1}\right\rceil$ is small, 
 when $f$ is close to being $k$-to-1 and  $N(f)$ is about $(d+1)q-d$. Furthermore, 
the bound in  Theorem~\ref{thm:Image_Size1} is sharp; if $d+1$ is a divisor of $q-1$, then
the map $m(x)= x^{d+1}$ reaches the lower bound of Theorem~\ref{thm:Image_Size1} and it is $d$-uniform.
To see that $m(x)$ is indeed $d$-uniform observe that for any non-zero $a$ the difference map 
$m(x+a)-m(x) = (x+a)^{d+1}-x^{d+1}$ has degree $d$  and if $\omega \ne 1$ with $\omega ^{d+1}=1$ then
$x:= (\omega -1)^{-1}$ satisfies $(x+1)^{d+1}-x^{d+1} =0$.
\\

Theorem~\ref{thm:Image_Size2} extends \cite[Theorem 2]{kyureghyan2008some} to cover an arbitrary $d$. Besides of giving a different proof for Theorem~\ref{thm:Image_Size1}, it  additionally provides information on the possible preimage distribution of a $d$-uniform map with minimal image set. 

 For a map $f:\F_q\to\F_q$ and $S\subseteq\F_q$, $a\in\F_q$, we denote by $f^{-1}(S)$ the preimage of $S$ under $f$ and by $\omega(a)$ the size of $f^{-1}(\{a\})$. 

\begin{theorem}\label{thm:Image_Size2}
	Let $f:\F_q\to\F_q$ be $d$-uniform.  Then
	\[
		|\Image(f)|\geq \left\lceil\frac{q}{d+1}\right\rceil.
	\]
	If 
	\[
		|\Image(f)| = \left\lceil\frac{q}{d+1}\right\rceil = \frac{q+\varepsilon}{d+1}
	\]
	with $1\leq\varepsilon\leq d$, then
	\begin{equation}\label{eq_min_image_set1}
		\sum_{y\in \Image(f)}(\omega(y)-(d+1))^2 \leq (d+1)(\varepsilon-1) + 1.
	\end{equation}
\end{theorem}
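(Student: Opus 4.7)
The first inequality is exactly Theorem~\ref{thm:Image_Size1}, so only the bound \eqref{eq_min_image_set1} requires new work. The plan is to recognize that the quantity $\sum_{y\in\Image(f)}(\omega(y)-(d+1))^2$ is essentially a variance, and hence admits a clean expression in terms of the three elementary statistics $|\Image(f)|$, $\sum_y\omega(y)$, and $\sum_y\omega(y)^2=N(f)$ that we already control.

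Concretely, I would first rewrite the left-hand side by expanding the square:
\[
\sum_{y\in\Image(f)}(\omega(y)-(d+1))^2 \;=\; \sum_{y\in\Image(f)}\omega(y)^2 \;-\; 2(d+1)\sum_{y\in\Image(f)}\omega(y) \;+\; (d+1)^2|\Image(f)|.
\]
Next I would translate each summand using the identities \eqref{eq_Mr}--\eqref{eq_r2Mr} grouped over fibers: $\sum_{y\in\Image(f)}\omega(y) = \sum_r rM_r(f) = q$ and $\sum_{y\in\Image(f)}\omega(y)^2 = \sum_r r^2 M_r(f) = N(f)$. This reduces the problem to an equality
\[
\sum_{y\in\Image(f)}(\omega(y)-(d+1))^2 \;=\; N(f) - 2(d+1)q + (d+1)^2|\Image(f)|.
\]

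Then I would substitute the hypothesis $(d+1)|\Image(f)| = q+\varepsilon$, which turns the right-hand side into $N(f) - (d+1)q + (d+1)\varepsilon$. Applying Corollary~\ref{cor:Nf}, namely $N(f) \le (d+1)q - d$, collapses the expression to $(d+1)\varepsilon - d = (d+1)(\varepsilon-1)+1$, giving exactly \eqref{eq_min_image_set1}.

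There is no real obstacle here: the whole argument is a direct computation once one notices that the squared deviation around $d+1$ is the natural quantity to expand, because it combines linearly the three sums controlled by \eqref{eq_Mr}, \eqref{eq_rMr}, \eqref{eq_r2Mr} and because the hypothesis on $|\Image(f)|$ is precisely what is needed to eliminate the $(d+1)^2|\Image(f)|$ term against the linear-in-$q$ term. The one point to keep in mind is that equality in \eqref{eq_min_image_set1} corresponds to equality in $N(f) \le (d+1)q-d$, i.e.\ to $f$ being zero-difference $d$-balanced, which is worth recording but not strictly needed for the stated inequality.
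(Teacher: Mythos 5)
Your proposal is correct and follows essentially the same route as the paper: expand $\sum_{y\in\Image(f)}(\omega(y)-(d+1))^2$ as $N(f)-2(d+1)q+(d+1)^2|\Image(f)|$, substitute $(d+1)|\Image(f)|=q+\varepsilon$, and apply $N(f)\leq (d+1)q-d$ from Corollary~\ref{cor:Nf}. The only cosmetic difference is that the paper re-derives the lower bound on $|\Image(f)|$ from the nonnegativity of this same sum of squares rather than citing Theorem~\ref{thm:Image_Size1}, but the computation is identical.
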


\begin{proof}
	By Corollary~\ref{cor:Nf}
	\[
	\sum_{y\in \Image(f)}(\omega(y))^2 = \sum_{y\in\F_q} (\omega(y))^2 = N(f) \leq (d+1)q-d.
	\]
	Since \[\sum_{y\in \Image(f)}\omega(y) = q,\]
we get
	\begin{align*}
		0 &\leq \sum_{y\in \Image(f)}(\omega(y)-(d+1))^2 = \sum_{y\in \Image(f)}((\omega(y))^2-2(d+1)\omega(y)+(d+1)^2) \\
			&= N(f)-2(d+1)q+(d+1)^2|\Image(f)| \leq (d+1)q-d-2(d+1)q+(d+1)^2|\Image(f)| \\
			&= -(d+1)q-d+(d+1)^2|\Image(f)|.
	\end{align*}
Hence
	\[
		(d+1)^2|\Image(f)| \geq (d+1)q+d
	\]
	and
	\[
		|\Image(f)|\geq \left\lceil\frac{(d+1)q+d}{(d+1)^2}\right\rceil
				  = \left\lceil\frac{q}{d+1}+\frac{d}{(d+1)^2}\right\rceil
			 \geq \left\lceil\frac{q}{d+1}\right\rceil.
	\]
	Now let \[|\Image(f)| = \left\lceil\frac{q}{d+1}\right\rceil = \frac{q+\varepsilon}{d+1}\] with $1\leq\varepsilon\leq d$. Then
	\begin{align*}
		\sum_{y\in \Image(f)}(\omega(y)-(d+1))^2 &\leq -(d+1)q-d+(d+1)^2\frac{q+\varepsilon}{d+1}\\
			&= \varepsilon d -d + \varepsilon=(d+1)(\varepsilon-1) + 1.
	\end{align*}
		\qed
\end{proof}

Later we use the following observation: 	Let $f$ be a $d$-uniform map and \[|\Image(f)| = \frac{q+\varepsilon}{d+1}.\] Define
	\[
		D = \{y\in \Image(f): \omega(y)\neq d+1\}.
	\]
	Then we have
	\[
		q = \sum_{y\in \Image(f)} \omega(y)
			= \sum_{y\in \Image(f)\setminus D} \omega(y)  + \sum_{y\in D} \omega(y)
			= \left(\frac{q+\varepsilon}{d+1}-|D|\right)(d+1) + \sum_{y\in D} \omega(y),
	\]
	implying
	\begin{equation}\label{eq_min_image_set2}
		\sum_{y\in D}\omega(y) = |D|(d+1)-\varepsilon.
	\end{equation}

The following theorem is a generalization of \cite[Theorem 1]{coulter2011number} and it provides
information on the possible preimage distribution of a $d$-uniform map.

\begin{theorem}\label{th:2.7}
	Let $f:\F_q \to \F_q$ be $d$-uniform. Then
	\begin{equation} \label{th-1case}
		\sum_{r=1}^d r(d+1-r)M_r(f)\geq d
	\end{equation}
	and
	\begin{equation} \label{th-2case}
		\sum_{r=1}^{d+1}r(d+2-r)M_r(f) \geq q+d.
	\end{equation}
	{
	The equality in \eqref{th-1case} holds if and only if $N(f)=(d+1)q-d$ and $M_r(f)=0$ for all $r \geq d+2$; and the equality in  \eqref{th-2case} holds 
	 if and only if $N(f)=(d+1)q-d$ and $M_r(f)=0$ for $r>d+2$. The latter case reduces to
	\[
		\sum_{r=1}^d r(d+1-r)M_r(f) = 
		(d+2)M_{d+2}(f)+d.
	\]
	}
\end{theorem}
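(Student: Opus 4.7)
The plan is to reduce both inequalities to the bound $N(f)\leq (d+1)q-d$ from Corollary~\ref{cor:Nf} via a single algebraic identity. Combining (\ref{eq_rMr}) and (\ref{eq_r2Mr}) gives
\[
	\sum_{r=1}^m r(d+1-r)M_r(f) = (d+1)q - N(f),
\]
and analogously $\sum_{r=1}^m r(d+2-r)M_r(f) = (d+2)q - N(f)$. By Corollary~\ref{cor:Nf}, these quantities are bounded below by $d$ and $q+d$ respectively, so both inequalities already hold once the sums are taken over all $r$; the remaining work is to restrict the summation range.

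For (\ref{th-1case}), I would exploit that the weight $r(d+1-r)$ vanishes at $r=d+1$ and is nonpositive for $r\geq d+2$. Rewriting
\[
	\sum_{r=1}^d r(d+1-r)M_r(f) = (d+1)q - N(f) + \sum_{r\geq d+2} r(r-d-1)M_r(f)
\]
expresses the left-hand side as a sum of two nonnegative quantities, which immediately yields the bound $\geq d$ and the characterization of equality: $N(f) = (d+1)q-d$ and $M_r(f) = 0$ for every $r\geq d+2$. The argument for (\ref{th-2case}) is structurally identical after shifting $d+1\mapsto d+2$: the weight $r(d+2-r)$ vanishes at $r=d+2$ and is negative for $r\geq d+3$, producing the lower bound $q+d$ with equality iff $N(f) = (d+1)q-d$ and $M_r(f) = 0$ for $r>d+2$.

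The reduction formula in the equality case of (\ref{th-2case}) falls out of the very same identity: under those equality constraints only $M_{d+2}(f)$ can contribute to the tail sum $\sum_{r\geq d+2} r(r-d-1)M_r(f)$, and evaluating the weight at $r = d+2$ gives $(d+2)\cdot 1 = d+2$, so $\sum_{r=1}^d r(d+1-r)M_r(f) = d + (d+2)M_{d+2}(f)$. I do not foresee any substantive obstacle; the entire proof is bookkeeping of signs of the weight polynomials $r(k-r)$ on either side of their zeros, combined with the single inequality from Corollary~\ref{cor:Nf}.
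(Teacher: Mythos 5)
Your proof is correct and follows essentially the same route as the paper: both rest on the bound $N(f)\leq (d+1)q-d$ from Corollary~\ref{cor:Nf} together with the identities \eqref{eq_rMr} and \eqref{eq_r2Mr}, and both obtain the equality characterizations by isolating the terms with $r\geq d+2$ (resp.\ $r>d+2$). The only difference is cosmetic: you derive \eqref{th-2case} directly from the parallel identity $\sum_{r=1}^m r(d+2-r)M_r(f)=(d+2)q-N(f)$, whereas the paper chains it off \eqref{th-1case} via the estimate $r^2-(d+1)r\geq r$ for $r\geq d+2$; the resulting equality conditions and the reduction formula are identical.
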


\begin{proof}
	Let $m$ be the degree of $f$. With Corollary~\ref{cor:Nf} we have $N(f)\leq(d+1)q-d$. Using \eqref{eq_rMr} and \eqref{eq_r2Mr} we get
	\[
		\sum_{r=1}^m r^2M_r(f) = N(f) \leq (d+1)q-d = (d+1)\sum_{r=1}^m rM_r(f) - d,
	\]
	so that
	\begin{equation}\label{eq1}
		\sum_{r=1}^{d+1} (r(d+1)-r^2)M_r(f) -d\geq
		\sum_{r=d+2}^m (r^2-(d+1)r)M_r(f)
	\end{equation}
	As the right hand side is non-negative, we have in particular
	\[
		\sum_{r=1}^d r(d+1-r)M_r(f)\geq d,
	\]
		with equality if and only if $M_r(f)=0$ for all $r \geq d+2$ and $N(f) = (d+1)q-d$.
	Note that for $r\geq d+2$ it holds that $r^2-(d+1)r\geq r$, so that \eqref{eq1} turns into
	\begin{equation}\label{eq2}
		\sum_{r=1}^{d+1} r(d+1-r)M_r(f)\geq
		\sum_{r=d+2}^m (r^2-(d+1)r)M_r(f)+d\geq
		\sum_{r=d+2}^m rM_r(f) + d.
	\end{equation}
	Adding $\sum_{r=1}^{d+1}rM_r(f)$ on both sides of \eqref{eq2} and using \eqref{eq_rMr} gives
	\[
		\sum_{r=1}^{d+1} r(d+2-r)M_r(f) \geq
		\sum_{r=1}^m rM_r(f) +d = q+d.
	\]
	For equality to hold, we need equality in \eqref{eq2}. The first equality in \eqref{eq2} holds if and only if $N(f)=(d+1)q-d$, the second equality holds if and only if
	\[
		\sum_{r=d+2}^m (r^2-(d+1)r)M_r(f) = 
		\sum_{r=d+2}^m rM_r(f),
	\]
	that is $M_r(f)=0$ for $r> d+2$. In that case
	\[
		\sum_{r=1}^d r(d+1-r)M_r(f) = 
		(d+2)M_{d+2}(f)+d.
	\]
		\qed
\end{proof}

\section{Dembowski-Ostrom $d$-uniform polynomials}

A polynomial/map $f \in \F_q[x]$ is called Dembowski-Ostrom (DO), if it can be written as
\[
	f(x) = \sum_{i,j=0}^{n-1}a_{ij}x^{p^i+p^j}
\]
 when $q$ is odd and
 \[
	 f(x) = \sum_{\substack{i,j=0\\i\neq j}}^{n-1}a_{ij}x^{2^i+2^j}
 \]
 when $q$ even. Note that $x^2$ is a DO polynomial for any odd $q$, but not for even $q$.
Maps obtained as the sum of a DO map with an $\F_p$-affine one are called quadratic.
 
Let $k$ be a divisor of $q-1$. We call a map $f$  $k$-divisible, if it can be written as $f(x)=f'(x^k)$ for a suitable $f'$.  
Observe that $f$ is $k$-divisible if and only if  $f(x)=f(\omega x)$ for all $x\in \F_q$ and all $\omega \in \F_q^*$ whose order divides $k$. Further, we call a map $f$ almost-$k$-to-1\footnote{Note that in many papers such maps are called just
$k$-to-1. However we use the terminology almost-$k$-to-1 to avoid confusion with $k$-to-1 maps considered in Section \ref{general}.}, if  there is a unique element in  $\Image(f)$ with exactly 1 preimage and all other images have exactly $k$ preimages. 
Note that if $f(x)$ is $d$-uniform, then so is $f(x+c)+u$ for arbitrary $c,u\in\F_q$. 
Hence we may without loss of generality assume that $f(0)=0$ and that 0 is the unique element with exactly one preimage,
when considering the $d$-uniform property of an almost-$k$-to-$1$ map $f$.\\

For a non-zero $a\in \F_q$ we  define 
$$D_a(f):= \{f(x+a)-f(x) : x\in \F_q\},$$ 
which we call a differential set of $f$ in direction $a$. It is well-known and easy to see
that the differential sets of quadratic maps are $\F_p$-affine subspaces. The following result can be partly
deduced from Proposition 3 and Corollary 1 and their proofs in \cite{carlet2016quadratic}. We include its proof for the convenience of
the reader.

\begin{lemma}\label{Lem_hyperplane}
Let $q=p^n$ with $p$ prime, $d+1$ be a divisor of $q-1$ and $f:\F_q \to \F_q$ be a $(d+1)$-divisible DO polynomial which is almost-$(d+1)$-to-1. Then 
\begin{itemize}
\item[(a)]
$f$ is zero-difference $d$-balanced;
\item[(b)]
 $f$ is $d$-uniform and all its differential sets are $\F_p$-linear subspaces;
\item[(c)] $d=p^i$ for some $i\geq 0$.
\end{itemize}
\end{lemma}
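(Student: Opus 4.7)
The plan rests on three ingredients: the orbit structure forced by $(d+1)$-divisibility, the shape of the almost-$(d+1)$-to-$1$ preimage distribution, and the $\F_p$-affinity of differential maps of DO polynomials. Let $\mu\subseteq\F_q^*$ denote the unique subgroup of order $d+1$, which exists because $(d+1)\mid q-1$. Then $(d+1)$-divisibility is equivalent to $f(\omega x)=f(x)$ for every $x\in\F_q$ and $\omega\in\mu$. Normalising so that $f(0)=0$ is the unique singleton preimage, the almost-$(d+1)$-to-$1$ assumption forces the preimage of every non-zero value in $\Image(f)$ to be a single $\mu$-coset of size $d+1$.

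For part (a), I fix $a\neq 0$ and count the solutions of $f(x+a)=f(x)$. The value $x=0$ would give $f(a)=0$ and hence $a=0$, which is excluded. For a non-zero solution $x$, both $x$ and $x+a$ lie in the same non-zero fibre of $f$, namely the $\mu$-coset $\mu x$, so $x+a=\omega x$ for some $\omega\in\mu$, and $\omega=1$ is excluded by $a\neq 0$. Thus every $\omega\in\mu\setminus\{1\}$ yields the unique candidate $x_\omega=a/(\omega-1)$; conversely each such $x_\omega$ is non-zero and satisfies $f(x_\omega+a)=f(\omega x_\omega)=f(x_\omega)$, so it really is a solution. These $d$ solutions are pairwise distinct, so $f$ is zero-difference $d$-balanced, proving (a).

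For part (b), I invoke the DO shape to decompose the differential $\phi_a(x):=f(x+a)-f(x)=L_a(x)+f(a)$, where $L_a$ is an $\F_p$-linear map; this is the standard calculation $(x+a)^{p^i+p^j}-x^{p^i+p^j}=a^{p^i}x^{p^j}+a^{p^j}x^{p^i}+a^{p^i+p^j}$ extended by linearity. Hence $D_a(f)=\Image(\phi_a)$ is an $\F_p$-affine subspace, and part (a) shows $0\in D_a(f)$, so in fact $D_a(f)$ is an $\F_p$-linear subspace. Every non-empty fibre of $\phi_a$ is a coset of $\ker L_a$ and therefore has a common size; by part (a) that common size equals $d$, proving $d$-uniformity and the structural description.

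For part (c), $\phi_a$ is a $d$-to-$1$ surjection of $\F_q$ onto the $\F_p$-linear subspace $D_a(f)$, so $|D_a(f)|=q/d$. Since cardinalities of $\F_p$-subspaces of $\F_q$ are powers of $p$, this forces $d=p^i$ for some $0\le i\le n$. I do not expect a serious obstacle; the only step requiring care is checking in (a) that the $d$ candidates really land in non-zero fibres, which is automatic because they are non-zero and $0$ has only the preimage $0$.
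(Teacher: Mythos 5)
Your proof is correct and follows essentially the same route as the paper's: you identify the solutions of $f(x+a)=f(x)$ with the points $a(\omega-1)^{-1}$ for $\omega$ in the order-$(d+1)$ subgroup, use the $\F_p$-affinity of the differential of a DO polynomial to get $d$-uniformity and the subspace structure, and read off $d=p^i$ from the size $q/d$ of the differential sets. Your write-up is somewhat more detailed (e.g.\ the explicit fibre-of-$\ker L_a$ argument), and note that the ``normalisation'' $f(0)=0$ with singleton fibre over $0$ is in fact automatic here rather than a choice, since a translate would not remain a $(d+1)$-divisible DO polynomial.
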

\begin{proof} First we prove statements (a) and (b):
Since $f$ is a DO polynomial, it is $d$-uniform in the case  it is zero-difference $d$-balanced.
First we show that for any non-zero $a$ the equation $f_a(x) = f(x+a)-f(x) =0$ has a solution
(equivalently, $D_a(f)$ is a subspace).
Indeed, let $1\neq\omega\in\F_q$ be a zero of $x^{d+1}-1$ and set $x=(\omega-1)^{-1}a$.
This $x$ fulfills $x+a = \omega x$, and hence $f_a(x) = f(\omega x)-f(x) =0$.
In particular, $f_a(x)=0$ hast at least $d$ solutions.
On the other side, since $f$ is $(d+1)$-divisible and almost-$(d+1)$-to-1, the equation $f(x+a)=f(x)$ 
is fulfilled if and only if $x+a = \omega x$ for an element $\omega$ satisfying $\omega^{d+1}=1$.
This implies that a solution $x$ must be given by $a(\omega-1)^{-1}$. And hence there are
at most $d$ solution for $f_a(x)=0$. 

The statement in (c) follows from (b). Indeed,
the differential sets of $f$ are linear subspaces of size $p^n/d$, and hence  $d=p^i$ for some $i \geq 0$.
	\qed
\end{proof}

The following result holds if $f$ is not a DO polynomial too:

 \begin{theorem}\label{Thm_d+1to1_if_d-uniform}
	 Let $d+1$ be a divisor of $q-1$ and $f:\F_q\to\F_q$ be $(d+1)$-divisible and $d$-uniform. Then $f$ is almost-$(d+1)$-to-1.
 \end{theorem}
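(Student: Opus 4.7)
The plan is to combine the structural upper bound on $|\Image(f)|$ coming from $(d+1)$-divisibility with the lower bound of Theorem~\ref{thm:Image_Size2}, and observe that the two forces the preimage distribution. First, I would reduce to the case $f(0)=0$ by replacing $f$ with $f-f(0)$, which preserves both the $(d+1)$-divisibility (the outer polynomial $f'$ just gets shifted by a constant) and the $d$-uniformity, and which also does not change the preimage multiplicity function $\omega$.

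Next, I would exploit the hypothesis that $d+1 \mid q-1$: the set $H=\{\omega\in\F_q^*:\omega^{d+1}=1\}$ is a subgroup of $\F_q^*$ of order exactly $d+1$, and $(d+1)$-divisibility translates to $f(\omega x)=f(x)$ for every $\omega \in H$ and $x\in\F_q$. Hence $f$ is constant on every $H$-orbit in $\F_q^*$, and since all such orbits have cardinality $d+1$, there are exactly $(q-1)/(d+1)$ of them. This immediately gives $|f(\F_q^*)|\leq (q-1)/(d+1)$ and therefore
\[
|\Image(f)|\;\leq\;\frac{q-1}{d+1}+1.
\]

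Now I would invoke Theorem~\ref{thm:Image_Size2} in the opposite direction. Because $q\equiv 1\pmod{d+1}$ we have $\lceil q/(d+1)\rceil=(q-1)/(d+1)+1$, so the lower bound from $d$-uniformity matches the upper bound above with equality. Equality has two consequences: the value $0=f(0)$ cannot also be attained on $\F_q^*$ (otherwise $|\Image(f)|\leq (q-1)/(d+1)$, contradicting the lower bound), and the $(q-1)/(d+1)$ orbits of $H$ must map to pairwise distinct nonzero images. The first fact says $\omega(0)=1$; the second says every nonzero element of $\Image(f)$ is the image of exactly one $H$-orbit and therefore has exactly $d+1$ preimages. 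This is precisely the almost-$(d+1)$-to-$1$ condition.

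The argument is short and its only real content is the matching of the two bounds; I do not expect a serious obstacle. The one point to watch is that the reduction $f\mapsto f-f(0)$ really does preserve $(d+1)$-divisibility and that the uniqueness of the orbit sizes uses $x\neq 0$ (which is why $f(0)$ plays the role of the exceptional image). Both of these are straightforward, so the proof should fit in a handful of lines once the bounds are in place.
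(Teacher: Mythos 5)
Your proof is correct and takes essentially the same route as the paper: the $(d+1)$-divisibility gives the upper bound $|\Image(f)|\le \frac{q-1}{d+1}+1$, the lower bound $\lceil q/(d+1)\rceil$ from $d$-uniformity matches it because $d+1\mid q-1$, and the resulting equality forces the $H$-orbit structure to yield the almost-$(d+1)$-to-$1$ preimage distribution. The only differences are cosmetic: you invoke Theorem~\ref{thm:Image_Size2} where the paper uses Theorem~\ref{thm:Image_Size1} (they give the same bound), and you spell out the final orbit-counting step that the paper leaves implicit.
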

 
 \begin{proof}
	 As $f$ is $(d+1)$-divisible, we have $|\Image(f)|\leq \frac{q+d}{d+1}$. By Theorem~\ref{thm:Image_Size1} we have $|\Image(f)|\geq \frac{q+d}{d+1}$ and therefore $|\Image(f)|= \frac{q+d}{d+1}$ and $f$ is almost-$(d+1)$-to-1.
		\qed
 \end{proof}

A fascinating property of DO planar polynomials proved in \cite{coulter2011number,weng2012further} is:
A DO polynomial is planar if and only if it is  almost-2-to-1. Observe that for an odd $q$ a DO polynomial
is always $2$-divisible. Corollary 1  in \cite{carlet2016quadratic} proves an  analog of this result for the $d$-uniform case;
Theorem \ref{th_divisible} is a reformulation of it using the terminology
introduced in this paper.

\begin{theorem}\label{th_divisible}
	Let $d+1$ be a divisor of $q-1$. A $(d+1)$-divisible DO polynomial $f$ is 
	$d$-uniform if and only if $f$ is almost-$(d+1)$-to-1.
\end{theorem}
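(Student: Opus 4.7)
The plan is to observe that both directions of the claimed equivalence have already been established in this section, so the proof reduces to citing them and is essentially a bookkeeping exercise.

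For the forward implication, suppose $f$ is a $(d+1)$-divisible DO polynomial that is $d$-uniform. Theorem \ref{Thm_d+1to1_if_d-uniform} already guarantees that any $(d+1)$-divisible $d$-uniform map (DO or not) is almost-$(d+1)$-to-1, so this direction is immediate; the DO hypothesis is in fact not needed here.

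For the reverse implication, suppose $f$ is a $(d+1)$-divisible DO polynomial that is almost-$(d+1)$-to-1. This is exactly the hypothesis of Lemma \ref{Lem_hyperplane}, and part (b) of that lemma asserts that such an $f$ is $d$-uniform (and in fact that every difference set $D_a(f)$ is an $\F_p$-linear subspace).

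There is no genuine obstacle: the real work lives in Theorem \ref{Thm_d+1to1_if_d-uniform} (which invokes the image-size lower bound Theorem \ref{thm:Image_Size1}) and in Lemma \ref{Lem_hyperplane} (which uses a nontrivial $(d+1)$-th root of unity $\omega$ to produce, for each nonzero $a$, the zero $x=(\omega-1)^{-1}a$ of $f_a$, and then combines $(d+1)$-divisibility with the almost-$(d+1)$-to-$1$ property to bound the number of solutions above). The bridging quantity in both directions is the minimal image size $(q+d)/(d+1)$: $(d+1)$-divisibility forces $|\Image(f)| \leq (q+d)/(d+1)$, $d$-uniformity forces $|\Image(f)| \geq (q+d)/(d+1)$ via Theorem \ref{thm:Image_Size1}, and under $(d+1)$-divisibility equality is precisely the almost-$(d+1)$-to-$1$ condition. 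So the proof of the theorem consists of exactly two sentences, one per direction, invoking Theorem \ref{Thm_d+1to1_if_d-uniform} and Lemma \ref{Lem_hyperplane}(b) respectively.
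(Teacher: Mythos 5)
Your proposal is correct and matches the paper's own proof, which likewise disposes of the forward direction by citing Theorem~\ref{Thm_d+1to1_if_d-uniform} and the reverse direction by citing Lemma~\ref{Lem_hyperplane}. Your additional commentary on where the real work lies (the image-size bound $(q+d)/(d+1)$ as the bridging quantity) is accurate but not needed for the proof itself.
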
 
\begin{proof}
It follows directly from Theorem~\ref{Thm_d+1to1_if_d-uniform} and  Lemma~\ref{Lem_hyperplane}. \qed
\end{proof}

Note that if $d=p^i$ and $d+1=p^i+1$ is a divisor of $q-1$, the map $x\mapsto x^{d+1}$ is a $d$-uniform DO Polynomial that is almost-$(d+1)$-to-1.
\\

\section{Image sets of APN maps of binary finite fields}

In the following sections we study the image sets of APN maps on binary fields. Such maps are of particular interest
because of their  applications in cryptography and combinatorics.  \\

\noindent
 {The first systematic
study of the image sets of APN maps was originated  in \cite{czerwinski2020minimal}, where it is shown 
  that the image set of an APN map on $\F_{2^n}$ contains at least $\lceil (2^n+1)/3 \rceil$ elements. This lower bound
	is proved by methods of linear programming in \cite{czerwinski2020minimal}, which is a novel approach for studying image sets of maps on finite fields. 
	The APN monomials have the image size  $(2^n+2)/3$ for $n$ even, showing that the lower bound
	is sharp for $n$ even. Several numerical results presented in \cite{czerwinski2020minimal} 
	 suggest that  the minimal image size of APN maps is much larger,
	probably around $2^{n-1}$, if $n$ is odd.}
	{Preprint
	\cite{carlet2020} comments that the lower bound   on the image sets of APN maps appears (in an equivalent form) already in \cite[Lemma 5]{carlet-heuser-picek}, where a lower bound on the differential
	uniformity via image set size is presented. The  arguments proving  Lemma 5 in \cite{carlet-heuser-picek} 
	are similar to
	ours  presented for Lemma \ref{Lem_Cauchy_Schwarz} and Corollary \ref{Lem_Nfg}. 
	These are more or less standard for studying image sets of 
	maps with special additive properties on finite sets, see \cite{kyureghyan2008some,weng2007pseudo} .}
	\\
	
	{Results of Section \ref{general}   allow, beside  the lower bound on the image size, to describe also the possible preimage distributions of APN maps meeting  it, see Theorem \ref{thm:APN_Lower_Bound}. For the APN maps on $\F_{2^n}$ Theorem~\ref{th:2.7}  reduces to:}
	
	\begin{corollary}\label{cor:apn-1-2} 
	Let $f:\F_{2^n} \to \F_{2^n}$ be APN. Then
	\begin{itemize}
	\item[(a)]
	\[
		M_1(f)+M_2(f)\geq 1,
	\]
	and hence there is at least one element with exactly 1 or 2 preimages. 
	 For $n$ even, the inequality is sharp if and only if $f$ is almost-$3$-to-$1$.
	For $n$ odd, the inequality is sharp if and only if  there is a unique element in $\Image(f)$
	with exactly two preimages and the remaining elements have exactly three preimages.
  \item[(b)] 
	\[
		3M_1(f) + 4M_2(f) + 3M_3(f) \geq 2^n+2.
	\]
	The equality holds if and only if $N(f)=3\cdot 2^n-2$ and $M_r(f)=0$ for $r>4$, in which case
	\[
		M_1(f)+M_2(f) = 2M_4(f)+1.
	\]
	\end{itemize} 
\end{corollary}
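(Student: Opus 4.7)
The proof is essentially a direct specialization of Theorem~\ref{th:2.7} to $d=2$, combined with a short case analysis using that $2^n \equiv 1 \pmod 3$ iff $n$ is even and $2^n \equiv 2 \pmod 3$ iff $n$ is odd. So the plan is to plug $d=2$ into both inequalities of Theorem~\ref{th:2.7} and then unwind the equality cases with \eqref{eq_Mr} and \eqref{eq_rMr}.

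For part (a): substituting $d=2$ into \eqref{th-1case} gives $2M_1(f) + 2M_2(f) \geq 2$, i.e.\ $M_1(f) + M_2(f) \geq 1$. For the sharpness claim, I would invoke the equality condition from Theorem~\ref{th:2.7}: equality forces $N(f) = 3 \cdot 2^n - 2$ and $M_r(f) = 0$ for all $r \geq 4$, and moreover $M_1(f) + M_2(f) = 1$. Substituting these into \eqref{eq_rMr} yields $M_1(f) + 2M_2(f) + 3M_3(f) = 2^n$, so either $(M_1,M_2) = (1,0)$ with $M_3 = (2^n-1)/3$ or $(M_1,M_2) = (0,1)$ with $M_3 = (2^n-2)/3$. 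The first is integral exactly when $n$ is even (and describes an almost-$3$-to-$1$ map by the definition recalled just before the statement), and the second exactly when $n$ is odd (and describes a map with a unique $2$-preimage point and all other image points having $3$ preimages). Conversely, any such map obviously satisfies $M_1(f) + M_2(f) = 1$, giving the ``if and only if''.

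For part (b): substituting $d=2$ in \eqref{th-2case} gives $3M_1(f) + 4M_2(f) + 3M_3(f) \geq 2^n + 2$. The equality statement in Theorem~\ref{th:2.7} with $d=2$ reads $N(f) = 3\cdot 2^n - 2$ and $M_r(f) = 0$ for $r > 4$, and in that case the final identity of the theorem specializes to
\[
    2M_1(f) + 2M_2(f) = 4M_4(f) + 2,
\]
which rearranges to $M_1(f) + M_2(f) = 2M_4(f) + 1$, as required. No new calculation is needed beyond dividing by $2$.

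There is no real obstacle here — the only thing to be slightly careful about is stating the sharpness characterization in (a) correctly, since the parity of $n$ dictates which of the two integrality constraints on $M_3(f)$ is compatible with $M_1(f) + M_2(f) = 1$. Both inequalities, and both equality characterizations, are transcribed from Theorem~\ref{th:2.7}; the corollary could be folded into the statement of that theorem but is separated out here because the APN case is the one of primary interest in the sequel.
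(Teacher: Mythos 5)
Your proposal is correct and follows essentially the same route as the paper: both inequalities and the equality characterization in (b) are read off from Theorem~\ref{th:2.7} with $d=2$, and the sharpness dichotomy in (a) is settled by observing that $M_r(f)=0$ for $r\geq 4$ together with \eqref{eq_rMr} and the value of $2^n \bmod 3$ forces $(M_1(f),M_2(f))=(1,0)$ for $n$ even and $(0,1)$ for $n$ odd. The paper's proof is just a terser version of the same argument, so there is nothing to add.
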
 
\begin{proof} {The inequalities as well as the equality case in (b) follow directly from Theorem~\ref{th:2.7}. 
Let $M_1(f)+M_2(f) = 1$. Then $M_r(f) = 0$ for every $r\geq 4$ by Theorem~\ref{th:2.7}.  To complete the proof note that the value  $2^n \pmod 3$ forces $(M_1(f),M_2(f)) =(1,0)$ resp. $(M_1(f),M_2(f)) =(0,1)$
depending on the parity of $n$.  \qed}
\end{proof} 

{The observation that an APN map must have at least one element with exactly 1 or 2 preimages was done
already in \cite{czerwinski2020minimal}.}\\

\noindent	
	The next theorem is a consequence of Theorem~\ref{thm:Image_Size1} and Theorem~\ref{thm:Image_Size2}. 
	Corollary~\ref{cor:apn-1-2} along with 
identity \eqref{eq_min_image_set2}  and inequality \eqref{eq_min_image_set1} yield the possible preimage distributions of  an APN map meeting  the lower bound.

\begin{theorem}\label{thm:APN_Lower_Bound}
	Let $f:\F_{2^n} \to \F_{2^n} $ be APN. Then
	\[
		|\Image(f)|\geq 
		\begin{cases}
			\frac{2^n+1}{3} & n \text{ is odd,} \\
			\frac{2^n+2}{3} & n \text{ is even}.
		\end{cases}
	\]
	If $n$ is odd and \[|\Image(f)| = \frac{2^n+1}{3},\] then $\omega(y_0)=2$ for one element $y_0\in \Image(f)$ and $\omega(y)=3$ for $y\in \Image(f)\setminus\{y_0\}$.
	
	If $n$ is even and \[|\Image(f)| = \frac{2^n+2}{3},\] then  one of the following cases must occur:
	\begin{enumerate}
		\item $\omega(y_0)=1$ for one element $y_0\in \Image(f)$ and $\omega(y)=3$ for all 
		$y\in \Image(f)\setminus\{y_0\}$,
		that is $f$ is almost-3-to-1.
		
		\item $\omega(y_i)=2$ for two elements $y_0, y_1\in \Image(f)$ and $\omega(y)=3$ for all 
		$y\in \Image(f)\setminus\{y_0, y_1\}$. 
		
		\item $\omega(y_i)=2$ for three elements $y_0, y_1, y_2\in \Image(f)$, $\omega(y_3)=4$ for a unique
		$y_3\in \Image(f)\setminus\{y_0, y_1, y_2\}$ and $\omega(y)=3$ for all  
		$y\in \Image(f)\setminus\{y_0, \ldots, y_3\}$. 
	\end{enumerate}
\end{theorem}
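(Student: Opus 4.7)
\medskip

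\noindent\textbf{Proof proposal.} The bound itself is a direct specialization of Theorem~\ref{thm:Image_Size1} with $d=2$: one has $|\Image(f)|\geq\lceil 2^n/3\rceil$, and since $2^n\equiv 2\pmod 3$ when $n$ is odd and $2^n\equiv 1\pmod 3$ when $n$ is even, this ceiling equals $(2^n+1)/3$ and $(2^n+2)/3$ respectively. So the first part of the theorem is immediate, and the work is entirely in the classification when equality holds.

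\medskip

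\noindent For the equality case I would apply Theorem~\ref{thm:Image_Size2} with $d=2$. The parameter $\varepsilon$ is forced by the congruence of $2^n$ mod $3$: $\varepsilon=1$ when $n$ is odd and $\varepsilon=2$ when $n$ is even. Plugging into inequality~\eqref{eq_min_image_set1} yields
\begin{equation*}
\sum_{y\in\Image(f)}(\omega(y)-3)^2 \;\leq\; 3(\varepsilon-1)+1 \;=\; \begin{cases} 1 & n\text{ odd,}\\ 4 & n\text{ even.}\end{cases}
\end{equation*}
Writing $D=\{y\in\Image(f):\omega(y)\neq 3\}$, identity~\eqref{eq_min_image_set2} reads
\begin{equation*}
\sum_{y\in D}\omega(y) \;=\; 3|D|-\varepsilon.
\end{equation*}
Setting $a_y:=\omega(y)-3$ for $y\in D$, the two constraints become $\sum_{y\in D}a_y=-\varepsilon$ and $\sum_{y\in D}a_y^{\,2}\leq 3(\varepsilon-1)+1$, with each $a_y$ a nonzero integer satisfying $a_y\geq -2$ (since $\omega(y)\geq 1$).

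\medskip

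\noindent The rest is a finite case analysis on the integer tuple $(a_y)_{y\in D}$. For $n$ odd the squared-deviation bound is $1$, so $|D|\leq 1$; combined with $\sum a_y=-1$ this forces $|D|=1$ and $\omega(y_0)=2$, giving the stated distribution. For $n$ even the squared-deviation bound is $4$, so $|D|\leq 4$, and one checks: $|D|=0$ is incompatible with $\sum a_y=-2$; $|D|=1$ forces $a_{y_0}=-2$, i.e.\ Case~1; $|D|=2$ forces $(a_{y_0},a_{y_1})=(-1,-1)$, i.e.\ Case~2; $|D|=3$ admits no integer solution with all $a_y\neq 0$, $\sum a_y=-2$ and $\sum a_y^{\,2}\leq 4$; $|D|=4$ forces every $|a_y|=1$, and the sign constraint $\sum a_y=-2$ then yields three $-1$'s and one $+1$, i.e.\ Case~3.

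\medskip

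\noindent The only mildly delicate step is ruling out $|D|=3$ for even $n$: one must argue that no triple of nonzero integers with $a_y\geq -2$ has both sum $-2$ and sum of squares at most $4$. This is elementary (the minimum of $\sum a_y^{\,2}$ subject to $a_y\neq 0$ and $\sum a_y=-2$ on three terms is attained at $(-1,-1,0)$, which is forbidden, and every admissible triple has sum of squares at least $5$), but it is the only place where the enumeration is not completely automatic.
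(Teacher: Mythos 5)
Your proposal is correct and follows essentially the same route as the paper: the lower bound comes from Theorem~\ref{thm:Image_Size2} (equivalently Theorem~\ref{thm:Image_Size1}), and the equality classification is the same finite case analysis on $D=\{y\in\Image(f):\omega(y)\neq 3\}$ driven by \eqref{eq_min_image_set1} and \eqref{eq_min_image_set2}. The only cosmetic difference is in the odd case, where the paper excludes $\omega(y_0)=4$ by invoking Corollary~\ref{cor:apn-1-2}, while you exclude it directly from the sum identity $\sum_{y\in D}\omega(y)=3|D|-\varepsilon$; both arguments are valid.
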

{
\begin{proof}
	Theorem~\ref{thm:Image_Size2} immediately yields the lower bound.
	We apply now \eqref{eq_min_image_set1} and \eqref{eq_min_image_set2} to prove
	the statements on the preimage distribution. Set $D = \{y\in \Image(f): \omega(y)\neq 3\}$.  If $n$ is odd, 
	by \eqref{eq_min_image_set1} we get
		\[\sum_{y\in D}(\omega(y)-3)^2 \leq 1.\]
		Hence there is at most one $y_0 \in \Image(f)$ such that $\omega(y)\neq 3$ and  it must satisfy
		$\omega(y_0) \in\{2,4\}$.  Corollary \ref{cor:apn-1-2} completes the proof.
		Let  $n$ be even. Then from \eqref{eq_min_image_set1} and \eqref{eq_min_image_set2} we get
		\[\sum_{y\in D}(\omega(y)-3)^2 \leq 4\]
	and
		\[\sum_{y\in D}\omega(y) = 3|D|-2.\]
		Clearly, if $|D|=1$, then $f$ is almost-$3$-to-$1$. If $|D|=2$, then $\omega(y)=2$ for every $y \in D$. Note that $|D|=3$ is not possible, since  $\omega(y)\in \{2,4\}$ for all $y \in D$, contradicting the second equation since $3|D|-2$ is odd in this case. If $|D|=4$, we have again $\omega(y)\in \{2,4\}$ for all $y \in D$ and the only solution to the second equation is $\omega(y)=2$ for $3$ elements and $\omega(y)=4$ for one element. $|D|>4$ violates the first equation, so we exhausted all possibilities.
	\qed
\end{proof}}

The  APN monomials meet the lower bound for $n$ even; we present
several further such families  later in this paper. All these examples of  APN maps are almost-3-to-1.
We believe that cases 2. and 3. for $n$ even  never occur. \\

\noindent
{ {\bf Open Problem:} Let $n$ be even and $f:\F_{2^n} \to \F_{2^n} $ be APN map with
$|\Image(f)| = (2^n+2)/3$. Can $f$ have the preimage distribution described in case 2. or 3.
of Theorem \ref{thm:APN_Lower_Bound}\,?}\\

\noindent
 Numerical results  suggest that there are no  APN maps meeting the lower bound for $n$ odd. 
We show later in this paper that the image sizes of almost bent maps never fulfill the 
lower bound in Theorem~\ref{thm:APN_Lower_Bound}.
The APN maps with smallest sizes which we found are 
\begin{itemize}
\item[-] for $n=7$ the map $x \mapsto x^3+x^{64}+x^{16}+x^4$ with the image size $57=2^6-7$; 
\item[-] for $n=11$ the  map $x \mapsto x^3 +x^{256} $  with the image size $1013=2^{10}-11$. 
\end{itemize} 
In \cite{czerwinski2020minimal} it is shown that the APN binomial $b(x)= x^3+x^4$ 
is $2$-to-1 if $n$ is odd. This binomial is studied in \cite{kyureg-mueller-wang}:
For an even $n$  the image set of  $b(x)=x^3+x^4$ satisfies
$M_1(b) = 2(2^n-1)/3, ~ M_2(b)=1$ and $M_4(b)=(2^n-4)/12$, and hence  $|\Image(b)|= 3\cdot 2^{n-2}$.\\

\noindent
The lower bound in Theorem~\ref{thm:APN_Lower_Bound} can be used to prove several structural
results for APN maps. For example, it  gives an easy proof for the following well-known property of
monomial APN maps.

\begin{corollary}
	Let $q=2^n$ and $f(x)=x^k$ be APN on $\F_{q}$. Then $\gcd(k, q-1)=1$ if $n$ is odd and $\gcd(k, q-1)=3$ if $n$ is even.
\end{corollary}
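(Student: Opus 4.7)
The plan is to apply the image-set lower bound of Theorem~\ref{thm:APN_Lower_Bound} to the explicit image size of the monomial. Writing $d := \gcd(k, q-1)$ and using that $x \mapsto x^k$ sends $\F_q^*$ onto its unique subgroup of index $d$ while fixing $0$, one immediately obtains the formula $|\Image(f)| = 1 + (q-1)/d$. The whole proof reduces to chasing the resulting inequality, together with the fact that $d$ divides the odd integer $2^n-1$.

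Substituting into Theorem~\ref{thm:APN_Lower_Bound}, the $n$ odd case becomes $1+(2^n-1)/d \geq (2^n+1)/3$, which rearranges to $d \leq 3(2^n-1)/(2^n-2) < 4$ for $n \geq 3$ (the case $n=1$ being immediate). Since $d$ divides $2^n-1$, it is odd; and for $n$ odd one has $2^n \equiv 2 \pmod 3$, so $3\nmid 2^n-1$. These divisibility constraints force $d=1$. For $n$ even the same substitution gives $1+(2^n-1)/d \geq (2^n+2)/3$, i.e.\ $d \leq 3$. Combining with $d\mid 2^n-1$, together with $3\mid 2^n-1$ and $2\nmid 2^n-1$, leaves exactly $d\in\{1,3\}$.

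The main obstacle is excluding $d=1$ when $n$ is even, since an APN permutation trivially satisfies $|\Image(f)| = 2^n \geq (2^n+2)/3$, so the image-set bound alone is insufficient. I would close this gap by exploiting $\F_4 \subseteq \F_{2^n}$. Assume $d=1$; then $\gcd(k,3)=1$ because $3\mid 2^n-1$, so $\omega^k$ has order $3$ for any primitive cube root of unity $\omega$, and in particular $\omega^k+\omega^{2k}=\omega+\omega^2=1$. Using $\omega+1=\omega^2$ in characteristic $2$, this yields $f(\omega+1)+f(\omega)=f(\omega^2+1)+f(\omega^2)=1$, and trivially $f(0+1)+f(0)=f(1+1)+f(1)=1$ as well. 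Hence all four elements of $\F_4$ are solutions of $f(x+1)+f(x)=1$, contradicting the $2$-uniformity of $f$. This rules out $d=1$, so $d=3$, completing the proof.
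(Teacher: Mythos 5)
Your proposal is correct and follows essentially the same route as the paper: the image-size formula for a monomial combined with the lower bound of Theorem~\ref{thm:APN_Lower_Bound} forces $\gcd(k,q-1)\leq 3$, divisibility of $2^n-1$ settles the odd case, and the subfield $\F_4$ rules out $d=1$ for even $n$. The only difference is cosmetic: where the paper invokes the (easy-to-check) non-existence of an APN permutation of $\F_4$, you exhibit the four solutions of $f(x+1)+f(x)=1$ explicitly, which works directly inside $\F_{2^n}$ without needing to pass to the restricted map.
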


\begin{proof}
	Since \[|\Image(f)\setminus\{0\}| = \frac{q-1}{\gcd(k, q-1)},\]
	 Theorem~\ref{thm:APN_Lower_Bound} forces $\gcd(k, q-1)\leq 3$. For $n$ odd we get $\gcd(k, q-1)=1$. 
	Now let $n$ be even and $\gcd(k, q-1)=1$. Then $f$ is an APN permutation on all subfields of $\F_q$. In particular it must be an APN permutation on $\F_4$. It is easy to check that such a permutation does not exist. Hence $\gcd(k, q-1)=3$.
		\qed
\end{proof}

  The following
characterization for APN $3$-divisible DO polynomials is a direct consequence of Theorem~\ref{th_divisible}:

 \begin{corollary}\label{th_iff}
	 Let  $n$ be even and $f:\F_{2^n}\to \F_{2^n}$ be a 3-divisible DO polynomial. Then $f$ is APN if and 
	only if $f$ is almost-3-to-1.
 \end{corollary}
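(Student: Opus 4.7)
The plan is to simply instantiate Theorem~\ref{th_divisible} with $d=2$, the only point requiring a moment of thought being the divisibility hypothesis. First I would verify that $3$ divides $q-1 = 2^n - 1$ when $n$ is even: since $2^2 \equiv 1 \pmod 3$, we have $2^n \equiv 1 \pmod 3$ for every even $n$, so $3 \mid 2^n-1$.

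Next, with $d=2$ and $d+1=3$ a divisor of $q-1$, the hypothesis of Theorem~\ref{th_divisible} is satisfied. A $3$-divisible DO polynomial $f:\F_{2^n}\to\F_{2^n}$ therefore is $2$-uniform, i.e.\ APN, if and only if it is almost-$3$-to-$1$. This is exactly the claim of the corollary.

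There is no real obstacle here; the statement is a one-line specialization. The only thing to be careful about is that Theorem~\ref{th_divisible} is stated in the general setting $q=p^n$ and that its proof relies on Lemma~\ref{Lem_hyperplane}, whose conclusion (c) gives $d=p^i$ for some $i\geq 0$; for $p=2$ the value $d=2=p^1$ is consistent, so no additional parity issue arises. Hence the proof consists of exactly the two steps above: check $3\mid 2^n-1$ for even $n$, then quote Theorem~\ref{th_divisible}.
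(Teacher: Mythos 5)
Your proposal is correct and matches the paper exactly: the paper also presents Corollary~\ref{th_iff} as a direct specialization of Theorem~\ref{th_divisible} to $d=2$, and your check that $3\mid 2^n-1$ for even $n$ is the only detail that needs to be supplied.
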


We take a closer look at 3-divisible APN maps in the next section.\\

Next we observe that   Zhou-Pott APN quadratic maps constructed 
in \cite{zhou2013new} provide examples of
 almost-3-to-1 APN maps   which are not 3-divisible. 

\begin{theorem} \label{thm:zhoupott31}
	Let  $m, i\geq 2$ even, $\gcd(k, m)=1$ and $\alpha\in\F_{2^m}$ not a cube. 
	\begin{itemize}
	\item[(a)]
	Then $f:\F_{2^m}\times\F_{2^m}\to\F_{2^m}\times\F_{2^m}$ given by
\begin{equation}
	f(x,y) = (x^{2^k+1}+\alpha y^{(2^k+1)2^i}, xy)
\label{eq:zhoupott}
\end{equation}
 is almost-3-to-1. More precisely $f(x,y) = f(u,v)$ if and only if $(x,y)=(\omega u, \omega^2 v)$ 
with $\omega \in \F_4^*$. { The corresponding to $f(x,y)$ 
map on $\F_{2^{2m}}$ has a univariate representation which is not a DO-polynomial.}
\item[(b)]
	Then $g:\F_{2^m}\times\F_{2^m}\to\F_{2^m}\times\F_{2^m}$ given by
\begin{equation}
	g(x,y) = (x^{2^k+1}+\alpha y^{(2^k+1)}, xy^{2^{m-i}})
\label{eq:zhoupott}
\end{equation}
 is almost-3-to-1. {The corresponding to $g(x,y)$ 
map on $\F_{2^{2m}}$ has a univariate representation which is a DO-polynomial that is not 3-divisible.}
\end{itemize}
\end{theorem}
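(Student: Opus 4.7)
The plan is to use the APN property of both families, established by Zhou and Pott in \cite{zhou2013new}, together with the lower bound of Theorem~\ref{thm:APN_Lower_Bound} as matching upper and lower bounds on the image size. Concretely, I will exhibit an $\F_4^*$-action on $\F_{2^m}\times\F_{2^m}$ under which both $f$ and $g$ are invariant, use orbit counting for the upper bound, and invoke the APN lower bound.

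The relevant action is $(x,y)\mapsto(\omega x,\omega^2 y)$ for $\omega\in\F_4^*$. Checking invariance reduces to modular arithmetic: since $\gcd(k,m)=1$ with $m$ even, $k$ is odd, so $3\mid 2^k+1$ and $\omega^{2^k+1}=1$; this handles the first components of $f$ and $g$, as well as the second component $xy$ of $f$ (where $\omega^3=1$ trivially suffices). For the second component $xy^{2^{m-i}}$ of $g$, one obtains the scalar factor $\omega^{1+2^{m-i+1}}$, and since $i,m$ are even, $m-i+1$ is odd, so $1+2^{m-i+1}\equiv 0\pmod 3$ and the factor equals $1$. One also checks that $(0,0)$ is the unique preimage of $(0,0)$ using $\gcd(2^k+1,2^m-1)=3$ (again from $\gcd(k,m)=1$ with $m$ even) together with the hypothesis that $\alpha$ is a non-cube. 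The action has $(2^{2m}-1)/3$ orbits of size $3$ plus the fixed orbit $\{(0,0)\}$, so $|\Image(f)|,|\Image(g)|\leq(2^{2m}+2)/3$. Combined with Theorem~\ref{thm:APN_Lower_Bound} we obtain equality, which forces distinct orbits to map to distinct values and therefore pins down the almost-3-to-1 structure and the precise collision pattern as claimed.

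For the univariate statements, I fix a basis $\{1,\theta\}$ of $\F_{2^{2m}}$ over $\F_{2^m}$, so that $X=x+\theta y$ and $x,y$ can be written as $\F_2$-linear combinations of $X$ and $X^{2^m}$ with nonzero coefficients. Each bivariate monomial $x^a y^b$ then expands as a sum of four terms in $X$ whose exponents are pairs of powers of $2$. In part (b), checking the monomials $x^{2^k+1}$, $y^{2^k+1}$ and $xy^{2^{m-i}}$ under the (implicit) constraints $1\leq k<m$ and $2\leq i<m$ with $i$ even yields only exponents of the form $2^a+2^b$ with $a\neq b$, so $g$ has a DO univariate representation. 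In part (a), on the contrary, $xy=x^{2^0}y^{2^0}$ contributes the exponents $2=2^1$ and $2^{m+1}$, which correspond to linear Frobenius monomials and are not permitted in a DO polynomial. A direct count of the exponents arising from the other monomials in $f$ shows that none of them can produce $2^1$ or $2^{m+1}$, so these linear terms survive in the univariate representation of $f$ and $f$ is not DO.

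Finally, for the non-$3$-divisibility of $g$, I observe that multiplication by $\omega\in\F_4^*\subset\F_{2^m}$ in $\F_{2^{2m}}$ corresponds in bivariate coordinates to the \emph{diagonal} action $(x,y)\mapsto(\omega x,\omega y)$, which differs from the twisted action preserving $g$. Substituting into $g$'s second component yields the factor $\omega^{1+2^{m-i}}$, and since $m-i$ is even, $1+2^{m-i}\equiv 2\pmod 3$, so this factor equals $\omega^2\neq 1$, ruling out $3$-divisibility. The main obstacle I foresee is the bookkeeping of the modular conditions: correctly juggling $\gcd(k,m)=1$, the evenness of $m$ and $i$, the appearance of $\omega^N$ for several different $N$, and the distinction between the twisted action that preserves the map and the diagonal action required for $3$-divisibility.
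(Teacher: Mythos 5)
Your proof is correct, but it reaches the almost-$3$-to-$1$ claim by a genuinely different route than the paper. The paper computes the fibers directly: it substitutes $u=xy/v$ into the system $f(x,y)=f(u,v)$, factors out $1+(y/v)^{2^k+1}$, and uses the non-cube hypothesis on $\alpha$ to kill all spurious solutions, thereby establishing $f(x,y)=f(u,v)\Leftrightarrow (x,y)=(\omega u,\omega^2 v)$ \emph{without ever invoking the APN property}; part (b) is then deduced from part (a) via the linear substitution $(x,y)\mapsto(x,y^{2^{m-i}})$. You instead take the APN property of the Zhou--Pott family as an external input from \cite{zhou2013new}, observe that both maps are constant on the orbits of the twisted action $(x,y)\mapsto(\omega x,\omega^2 v)$, and squeeze the image size between the orbit count $(2^{2m}+2)/3$ and the lower bound of Theorem~\ref{thm:APN_Lower_Bound}; equality forces the fibers to be exactly the orbits. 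Your invariance computations (the congruences $2^k+1\equiv 0$, $1+2^{m-i+1}\equiv 0$, and $1+2^{m-i}\equiv 2 \pmod 3$) and the DO/non-DO analysis of the univariate forms match the paper's in substance. What your route buys is brevity and a clean conceptual mechanism (group action plus extremality); what it gives up is the point the authors emphasize in the remark following the theorem: because their fiber computation is independent of the APN property, Theorems~\ref{thm:zhoupott31} and \ref{thm:walsh_spectrum_three_to_one} together furnish a \emph{new proof} that these maps are APN, whereas your argument is logically sound (no circularity, since \cite{zhou2013new} proves APN-ness independently) but cannot serve that secondary purpose. Two small points to tighten: the check that $(0,0)$ is the unique preimage of $(0,0)$ is already forced by the equality of image size with the orbit count, so it is redundant; and for part (b) you should note explicitly that $g=f\circ(x,y\mapsto(x,y^{2^{m-i}}))$ is linearly equivalent to $f$, so the APN hypothesis you import indeed applies to $g$ as well.
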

\begin{proof}
Note that $\gcd(2^{2m}-1, 2^k+1) =3$ and 3 is a divisor of both $2^m-1$ and $2^i-1$.\\

(a)	Let $(x,y),(u,v)\in\F_{2^m}\times\F_{2^m}$ with $f(x,y)=f(u,v)$. Then we have
	\begin{align*}
		x^{2^k+1}+\alpha y^{(2^k+1)2^i} &= u^{2^k+1}+\alpha v^{(2^k+1)2^i} \\
		xy &= uv.
	\end{align*}
	First suppose $v=0$, and hence $x=0$ or $y=0$ too. For $x=0$, we get
	$$
	\alpha y^{(2^k+1)2^i} = u^{2^k+1},
	$$
	which forces $y=u=0$, since $\alpha$ is a non-cube. For $x, u \ne 0$ and $y=0$ we get
	$$
	x^{2^k+1} = u^{2^k+1},
	$$
	which is satisfied if and only if $x = \omega u$ with $\omega   \in \F_4^*$.
	Now let $v\ne 0$. Setting $u=\frac{xy}{v}$ and rearranging the first equation we get
	\[
		x^{2^k+1}+\left(\frac{xy}{v}\right)^{2^k+1} = \alpha (y^{2^k+1}+v^{2^k+1})^{2^i}
	\]
	or equivalently
	\[
		x^{2^k+1}\left(1+\left(\frac{y}{v}\right)^{2^k+1}\right) = \alpha v^{(2^k+1)2^i}\left(1+\left(\frac{y}{v}\right)^{2^k+1}\right)^{2^i}.
	\]
	If $1+\left(\frac{y}{v}\right)^{2^k+1}\neq 0$, we can divide by it and obtain
	\begin{equation}
	 x^{2^k+1} = \alpha v^{(2^k+1)2^i}\left(1+\left(\frac{y}{v}\right)^{2^k+1}\right)^{2^i-1}.
	\label{eq:cube_eq}
	\end{equation}
	Note that \eqref{eq:cube_eq} has no solution, since $x^{2^k+1}$, $ v^{(2^k+1)2^i}$ and $\left(1+\left(\frac{y}{v}\right)^{2^k+1}\right)^{2^i-1}$ are all cubes and $\alpha$ is not a cube.
	Finally observe that $\left(\frac{y}{v}\right)^{2^k+1}= 1$ holds if and only if  $y =\omega v$ with $\omega   \in \F_4^*$.\\
	Next we show that the corresponding to $f(x,y)$ map on $\F_{2^{2m}}$ is not given by a univariate DO 
	polynomial.
Let $(u_1, u_2)$ be a basis of $\F_{2^{2m}}$ over $\F_{2^m}$ and $(v_1, v_2)$ its dual basis.
Then an element $z$ of $\F_{2^{2m}}$ has the  representation $(v_1z+\overline{v_1z})u_1 + (v_2z+\overline{v_2z})u_2$,
where $\overline{a} = a^{2^m}$. Thus we get 
\begin{eqnarray*}
f(z) & = &f(v_1z+\overline{v_1z}, v_2z+\overline{v_2z}) \\
 &= &\left((v_1z+\overline{v_1z})^{2^k+1}+\alpha (v_2z+\overline{v_2z})^{(2^k+1)2^i}\right)u_1 \\ &+& (v_1z+\overline{v_1z})\cdot (v_2z+\overline{v_2z}) u_2 \\
 & = & \ldots +( (v_1\overline{v_2} + \overline{v_1}v_2)z^{2^m+1}+ v_1v_2z^2 + \overline{v_1v_2}z^{2\cdot2^m})u_2.
\end{eqnarray*}
Since $k\ne 0$, there will be no term $z^2$ in the summand for $u_1$, and hence the above polynomial
contains a non-zero term with $z^2$, showing that $f(z)$ is not a DO-polynomial.\\

(b) Note that $g(x,y)$ is obtained from $f(x,y)$ by a linear bijective transformation
$(x,y) \mapsto (x, y^{2^{m-i}})$. In particular, $g(x,y)$ is almost 3-to-1, too.
Next we describe the univariate representation of the corresponding to $g(x,y)$ map on $\F_{2^{2m}}$.
Again let $(u_1, u_2)$ be a basis of $\F_{2^{2m}}$ over $\F_{2^m}$ and $(v_1, v_2)$ its dual basis.
 Then we get 
\begin{eqnarray*}
g(z) & = &g(v_1z+\overline{v_1z}, v_2z+\overline{v_2z}) \\
 &= &\left((v_1z+\overline{v_1z})^{2^k+1}+\alpha (v_2z+\overline{v_2z})^{(2^k+1)}\right)u_1 \\ &+& (v_1z+\overline{v_1z})\cdot (v_2z+\overline{v_2z})^{2^{m-i}} u_2,
\end{eqnarray*}
which is a DO polynomial. Finally, note that $g(x,y) \ne g(\omega x, \omega y)$ for $\omega \in \F_4 \setminus \F_2$, and hence the DO polynomial $g(z)$ is not 3-divisible.

	\qed
\end{proof}

\begin{remark} {
Observe that our proof of Theorem \ref{thm:zhoupott31} does not use the property that $f(x,y)$ is APN.
Hence Theorems \ref{thm:zhoupott31} and \ref{thm:walsh_spectrum_three_to_one} prove that the maps $f(x,y)$ and $g(x,y)$ are APN. }
\end{remark}

{In ~\cite{carlet2016quadratic} a map $f:\F_q \to \F_q$ is called $\delta$-vanishing  
if for any non-zero $a$ the equation $f(x+a)-f(x) =0$ has $t_a$ solutions, where  $ 0 < t_a \leq \delta$.
Note that any  zero-difference $d$-balanced map is $d$-vanishing.
Problem 1 in ~\cite{carlet2016quadratic} asks whether any quadratic $\delta$-vanishing map
must be $d$-divisible with an appropriate $d$. Theorem \ref{thm:zhoupott31} shows that the answer
to this problem is negative. Indeed, the Zhou-Pott maps $f(x,y)$ and $g(x,y)$ are quadratic APN maps
which are not $3$-divisible. These maps are almost-3-to-1 and hence $N(f)=N(g)=3q-2$ and then by
Corollary \ref{cor:Nf} they are zero-difference 2-balanced.  } \\

\noindent
{
The next  sufficient condition for an APN map to be almost-3-to-1 
follows immediately from the lower bound  in
Theorem \ref{thm:APN_Lower_Bound}.
\begin{proposition}\label{prop:apn_3to1}
	Let $n$ be even and  $f:\F_{2^n}\to \F_{2^n}$ be an APN map satisfying
	\begin{itemize}
		\item $f(0)=0$,
		\item every $y \in \Image(f)\setminus \{0\}$ has at least $3$ preimages. 
	\end{itemize}
	Then $f$ is almost-3-to-1.
\end{proposition}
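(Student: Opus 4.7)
The plan is to combine the lower bound on $|\Image(f)|$ from Theorem~\ref{thm:APN_Lower_Bound} with the basic counting identity $\sum_{y \in \Image(f)} \omega(y) = 2^n$ applied to the hypotheses. Since $n$ is even, Theorem~\ref{thm:APN_Lower_Bound} gives $|\Image(f)| \geq (2^n+2)/3$, and the definition of almost-$3$-to-$1$ demands both $|\Image(f)| = (2^n+2)/3$ and a very specific preimage pattern. The entire argument therefore consists in showing that these two constraints force each other once we know $f(0)=0$ and every nonzero image has at least three preimages.

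First I would set $s = |\Image(f)|$ and $c = \omega(0)$. Since $f(0)=0$ we have $c \geq 1$. Splitting the counting identity as
\begin{equation*}
2^n \;=\; \omega(0) + \sum_{\substack{y \in \Image(f)\\ y \neq 0}} \omega(y) \;\geq\; c + 3(s-1),
\end{equation*}
using the hypothesis that each nonzero image contributes at least $3$, I obtain $s \leq (2^n - c + 3)/3$. Combined with the lower bound $s \geq (2^n+2)/3$ from Theorem~\ref{thm:APN_Lower_Bound}, this yields $c \leq 1$, so $c = 1$.

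Plugging $c = 1$ back into the inequality $s \leq (2^n - c + 3)/3 = (2^n+2)/3$ and comparing with the lower bound forces $s = (2^n+2)/3$. Consequently the sum $\sum_{y \neq 0} \omega(y) = 2^n - 1$ is distributed over exactly $s - 1 = (2^n-1)/3$ nonzero images, each contributing at least $3$; since the average already equals $3$, every nonzero image must have exactly $3$ preimages. Together with $\omega(0) = 1$ this is precisely the almost-$3$-to-$1$ property.

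There is no real obstacle here: the proposition is essentially a tight instance of the lower bound, and the only thing to verify is that the inequalities allow no slack. The step that deserves care is ensuring that $0$ is indeed the unique element with a single preimage; this is automatic because any image with fewer than three preimages must be $0$ by hypothesis, and the inequality forces exactly one such image.
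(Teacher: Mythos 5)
Your argument is correct and is exactly the ``immediate'' deduction the paper intends: the paper states the proposition follows from the lower bound of Theorem~\ref{thm:APN_Lower_Bound} and gives no further details, and your counting $2^n \geq \omega(0) + 3(|\Image(f)|-1)$ combined with that lower bound is precisely the squeeze that forces $\omega(0)=1$ and $\omega(y)=3$ for all $y\neq 0$. Nothing is missing.
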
 \qed
The Zhou-Pott construction of APN maps  was recently generalized in ~\cite{farukapn}.
Next we use Proposition \ref{prop:apn_3to1} to show  that the APN maps of this construction  are  almost-$3$-to-$1$ too.
\begin{theorem}\label{apn-faruk}
	Define the following maps on $\F_{2^m} \times \F_{2^m}$
	\[f_1(x,y)=(x^{2^k+1}+xy^{2^k}+y^{2^k+1},x^{2^{2k}+1}+x^{2^{2k}}y+y^{2^{2k}+1})\]
	where $\gcd(3k,m)=1$ and 
	\[f_2(x,y)=(x^{2^k+1}+xy^{2^k}+y^{2^k+1},x^{2^{3k}}y+xy^{2^{3k}}),\]
	where $\gcd(3k,m)=1$ and $m$ is odd.	
	Then $f_1$ and $f_2$ are almost-$3$-to-$1$ APN maps.
\end{theorem}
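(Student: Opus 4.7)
The plan is to mirror the strategy of Theorem~\ref{thm:zhoupott31}: exhibit an $\F_2$-linear automorphism $\sigma$ of $\F_{2^m}\times\F_{2^m}$ of order $3$ with unique fixed point $(0,0)$ such that $f_i\circ\sigma = f_i$ for $i=1,2$, and then combine this with the APN property of $f_1,f_2$ established in~\cite{farukapn} to invoke Proposition~\ref{prop:apn_3to1}. The natural candidate is $\sigma(x,y) := (x+y,\,x)$. Iterating gives $\sigma^2(x,y) = (y,\, x+y)$ and $\sigma^3(x,y) = (x,y)$, so $\sigma$ has order dividing~$3$; the fixed-point equations $x+y=x$ and $x=y$ force $(x,y) = (0,0)$, so $\sigma$ has order exactly $3$ and acts freely on $(\F_{2^m}\times\F_{2^m})\setminus\{(0,0)\}$. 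In particular, every $\sigma$-orbit of a non-zero point has cardinality exactly~$3$.

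The heart of the argument is the $\sigma$-invariance of each coordinate. The first coordinate of both $f_1$ and $f_2$ is the same form $N(x,y) := x^{2^k+1} + xy^{2^k} + y^{2^k+1}$, which is, up to a Frobenius twist, the $\F_4/\F_2$ norm form pulled back via the identification $\F_{2^m}\times\F_{2^m} \cong \F_{2^m}[\tau]/(\tau^2+\tau+1)$, in which $\sigma$ corresponds to multiplication by the primitive cube root $\tau^2$; thus structurally $N\circ\sigma = N$. Concretely, one expands $(x+y)^{2^k+1} + (x+y)x^{2^k} + x^{2^k+1}$, uses $(x+y)^{2^k} = x^{2^k}+y^{2^k}$, and cancels in characteristic $2$ to recover $N(x,y)$. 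The same templated expansion with $k$ replaced by $2k$ handles the second coordinate $x^{2^{2k}+1}+x^{2^{2k}}y+y^{2^{2k}+1}$ of $f_1$. For the second coordinate $L(x,y) := x^{2^{3k}}y + xy^{2^{3k}}$ of $f_2$, a one-line direct calculation gives
\[
L(x+y,\,x) \;=\; 2x^{2^{3k}+1} + y^{2^{3k}}x + yx^{2^{3k}} \;=\; L(x,y),
\]
the first summand vanishing in characteristic $2$.

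With $\sigma$-invariance in hand, any $y \in \Image(f_i)\setminus\{0\}$ has a $\sigma$-closed preimage set disjoint from $(0,0)$, hence of cardinality a positive multiple of $3$; in particular at least three preimages. Since $f_i(0,0) = 0$ and $f_i$ is APN by~\cite{farukapn}, Proposition~\ref{prop:apn_3to1} now yields that $f_i$ is almost-$3$-to-$1$. The only real obstacle is the bookkeeping in the $\sigma$-invariance expansions; the hypotheses $\gcd(3k,m)=1$ (and $m$ odd for $f_2$) play no role in the almost-$3$-to-$1$ conclusion and are needed only for citing APN-ness from~\cite{farukapn}.
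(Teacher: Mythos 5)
Your proposal is correct and follows essentially the same route as the paper: the paper verifies the identities $f_i(y,x+y)=f_i(x,y)=f_i(x+y,x)$ by direct expansion (your invariance under the order-$3$ map $\sigma(x,y)=(x+y,x)$ is exactly this, packaged as a group action), concludes that every non-zero image element has at least three preimages, and then applies Proposition~\ref{prop:apn_3to1} together with the APN property from~\cite{farukapn}.
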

\begin{proof}
	The APN property of these maps (under the given conditions) was proven in \cite{farukapn}. 
	For the rest, we check the conditions of Proposition~\ref{prop:apn_3to1}. The first condition is clearly satisfied in both cases.
	We start with $f_1$:
	Direct computations show that 
	\begin{align*}
		f_1(y,x+y)&=  \\  & (y^{2^k+1}+y(x+y)^{2^k}+(x+y)^{2^k+1},y^{2^{2k}+1}+y^{2^{2k}}(x+y)+(x+y)^{2^{2k}+1}) \\
		&=(x^{2^k+1}+xy^{2^k}+y^{2^k+1},x^{2^{2k}+1}+x^{2^{2k}}y+y^{2^{2k}+1})=f_1(x,y).
	\end{align*}
	A similar calculation yields $f_1(x,y)=f_1(x+y,x)$. Thus every $y \in \Image(f_1)\setminus \{0\}$ has at least three preimages. Both conditions of Proposition~\ref{prop:apn_3to1} are satisfied for $f_1$, completing the proof for $f_1$.
	Consider now $f_2$:
	Similarly to the first case, we have
	\begin{align*}
		f_2(y,x+y)&=(y^{2^k+1}+y(x+y)^{2^k}+(x+y)^{2^k+1},y^{2^{3k}}(x+y)+y(x+y)^{2^{3k}}) \\
		&=(x^{2^k+1}+xy^{2^k}+y^{2^k+1},x^{2^{3k}}y+xy^{2^{3k}})=f_2(x,y),
	\end{align*}	
	and similarly  $f_2(x,y)=f_2(x+y,x)$, so both conditions of Proposition~\ref{prop:apn_3to1} are again satisfied.
	\qed
\end{proof}
A further large family of inequivalent almost-$3$-to-$1$ APN maps has been found by Faruk G\"olo\u{g}lu and the first author, and will be published soon.} \\

\noindent
A natural question is whether  every quadratic APN map of $\F_{2^n}$ with  even $n$ is EA-equivalent to 
an almost-$3$-to-$1$ map. The answer is negative. By  Theorem~\ref{thm:walsh_spectrum_three_to_one}, all
almost-3-to-1 quadratic APN maps have the classical Walsh spectrum. And hence the EA-class of  quadratic APN maps with non-classical Walsh spectra do not contain  an almost-$3$-to-$1$ map.

\section{3-divisible APN maps}

Observe that by Corollary~\ref{th_iff}, every APN  DO polynomial $f'(x^3)$ on $\F_{2^n}$, $n$ even, is an example
with the preimage distribution described in Case 1. of Theorem~\ref{thm:APN_Lower_Bound}. Prominent examples for such
APN maps are $x\mapsto x^3$ and $x\mapsto x^3 +\Tr(x^9)$.
 These maps are APN for any $n$. If $n$ is odd,
then $x\mapsto x^3$ is a permutation and $x\mapsto x^3 +\Tr(x^9)$ is 2-to-1, as we will see later in this section.

Next we present an interesting observation which could be helpful for performing numerical searches as well as theoretical studies of
3-divisible APN DO polynomials. In particular it could be used for
 classifying exceptional APN 3-divisible DO polynomials.

\begin{theorem}\label{th_subfield_permutation}
	 Let  $n=2^im$ with $i\geq 1$ and $m\geq 3$ odd. 
	Suppose  $f:\F_{2^n}\to \F_{2^n}$ is a 3-divisible APN DO polynomial over the subfield $\F_{2^m}$ that is $f \in \F_{2^m}[x]$. Then $f$ is an APN permutation on the subfield $\F_{2^m}$.
 \end{theorem}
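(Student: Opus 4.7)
The goal is to prove separately that the restriction $g := f|_{\F_{2^m}}$ is APN and that it is a permutation of $\F_{2^m}$. Since $f \in \F_{2^m}[x]$, we have $g(\F_{2^m}) \subseteq \F_{2^m}$, so both claims make sense.

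First, I would note that APN-ness descends to the subfield for free: for any $a \in \F_{2^m}^*$ and $b \in \F_{2^m}$, every $\F_{2^m}$-solution of $g(x+a)+g(x)=b$ is also an $\F_{2^n}$-solution of $f(x+a)+f(x)=b$, so there are at most two such solutions. Since no 1-uniform map exists in characteristic 2, $g$ is APN on $\F_{2^m}$.

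The substantive step is showing that $g$ is a permutation. I would apply Corollary~\ref{th_iff} to conclude that $f$ is almost-3-to-1 on $\F_{2^n}$. Because $n$ is even, $\F_4 = \{0,1,\zeta,\zeta^2\} \subseteq \F_{2^n}$, and the 3-divisibility of $f$ gives $\{x,\zeta x,\zeta^2 x\} \subseteq f^{-1}(f(x))$ for every nonzero $x \in \F_{2^n}$. Since almost-3-to-1 caps the non-singleton fibres at size three, this inclusion is an equality for $x \neq 0$; moreover $f^{-1}(f(0)) = \{0\}$, because any additional preimage would bring along a complete $\F_4^*$-orbit and force $f(0)$ to have at least four preimages, contradicting almost-3-to-1.

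The argument closes with an orbit count: since $m$ is odd, $\F_{2^m} \cap \F_4 = \F_{2^{\gcd(m,2)}} = \F_2$, so $\zeta \notin \F_{2^m}$. Hence for every $x \in \F_{2^m}^*$, neither $\zeta x$ nor $\zeta^2 x$ lies in $\F_{2^m}$, and each $\F_4^*$-orbit of $\F_{2^n}^*$ meets $\F_{2^m}$ in at most one point. This yields injectivity of $g$ on the finite set $\F_{2^m}$, hence the permutation property. There is no substantial obstacle in this proof; the main conceptual point is simply that almost-3-to-1 combined with 3-divisibility forces every nonzero fibre to be a complete $\F_4^*$-orbit, which cannot fit inside the odd-degree subfield $\F_{2^m}$.
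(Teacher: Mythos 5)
Your proposal is correct and follows essentially the same route as the paper: restriction of the APN property to the subfield, Corollary~\ref{th_iff} to get that $f$ is almost-3-to-1 on $\F_{2^n}$, and the observation that the resulting $\F_4^*$-orbit fibres cannot meet $\F_{2^m}$ in more than one point because $\F_4\not\subseteq\F_{2^m}$ for $m$ odd. You merely spell out in more detail the final injectivity step that the paper leaves implicit.
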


\begin{proof}
Since the coefficients of $f$ are from $\F_{2^m}$, it defines an APN map on it.
By Corollary~\ref{th_iff}, $f$ is almost-3-to-1 on $\F_{2^n}$. Moreover, $f(x)=f(\omega x)=f(\omega^2 x)$ for $\omega \in \F_4\setminus\F_2$.
The statement now follows from the fact that $\F_4$ is not contained in $\F_{2^m}$.
	\qed
\end{proof}

The substitution of $x^3$ in a polynomial of shape $f'(x) = L_1(x) + L_2(x^3)$, 
where $L_1, L_2$ are linearized polynomials, results in
a DO polynomial $f(x) = f'(x^3) = L_1(x^3) + L_2(x^9)$. Hence for even $n$ by Corollary~\ref{th_iff} 
such a map is APN if and only if it is almost 3-to-1. In particular,
any permutation of shape $L_1(x) + L_2(x^3)$  yields directly
an APN DO polynomial if $n$ is even.  Observe that $x^3$ and $ x^3+ \Tr(x^9)$ are
of this type too. These and further   APN DO polynomials $L_1(x^3) + L_2(x^9)$ are studied in \cite{budaghyan2009constructing,budaghyan2009construction}. Corollary~\ref{th_iff} suggests a unified approach
for understanding such APN maps.

Results  from \cite{charpin2008class} can be used to construct and explain  permutations of shape $f'(x) = L_1(x) + L_2(x^3)$.
For example, Theorem 6 in \cite{charpin2008class}  with $s=3$ and $L(x) = x^2+\alpha x$ yields the following family of APN 3-divisible DO polynomials.

\begin{theorem}\label{th:apn_chk}
Let $\alpha, \beta, \gamma$ be non-zero elements in $\F_{2^n}$ with $n$ even. Further let
$\gamma \not\in \{ x^2+\alpha x ~|~ x \in \F_{2^n}\}$ and $\Tr(\beta \alpha)=1$, then 
 $$
f'(x) = x^2 + \alpha x +\gamma \Tr(\alpha^{-3}x^3+\beta x)
$$
is a permutation on  $\F_{2^n}$ and consequently
$$
f(x) = f'(x^3) = x^6 + \alpha x^3 +\gamma \Tr(\alpha^{-3}x^9+\beta x^3)
$$
is APN.
\end{theorem}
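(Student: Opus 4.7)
The plan is to split the argument into two parts: (i) establish that the polynomial $f'$ is a permutation of $\F_{2^n}$, and (ii) deduce the APN property of $f=f'(x^3)$ from the machinery already developed in the paper, in particular Corollary~\ref{th_iff}.

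For part (i), I would directly apply Theorem 6 of \cite{charpin2008class} with the parameter choice $s=3$ and $L(x)=x^2+\alpha x$. The hypotheses of that theorem translate into the two conditions stated: $\Tr(\beta\alpha)=1$ (the trace-type condition) and $\gamma\notin\{x^2+\alpha x:x\in\F_{2^n}\}=\Image(L)$ (the non-image condition). Carrying out the substitution and matching coefficients is the bookkeeping step, but no genuinely new argument is required.

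For part (ii), first I would rewrite $f'(x)=L_1(x)+L_2(x^3)$, where $L_1(x)=x^2+\alpha x+\gamma\Tr(\beta x)$ and $L_2(y)=\gamma\Tr(\alpha^{-3}y)$ are $\F_2$-linear polynomials. Substituting $x\mapsto x^3$ therefore produces
\[
f(x)=L_1(x^3)+L_2(x^9),
\]
whose non-linear monomials are of the form $x^{3\cdot 2^i}=x^{2^i+2^{i+1}}$ and $x^{9\cdot 2^i}=x^{2^i+2^{i+3}}$, so $f$ is a Dembowski-Ostrom polynomial. It is 3-divisible by construction, since $(\omega x)^3=x^3$ for every $\omega\in\F_4^*$.

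It remains to check that $f$ is almost-3-to-1, which is immediate from part (i): for $x,y\in\F_{2^n}$,
\[
f(x)=f(y)\iff f'(x^3)=f'(y^3)\iff x^3=y^3\iff y\in x\cdot\F_4^*\text{ or }x=y=0.
\]
Hence $0$ is the unique element of $\Image(f)$ with a single preimage and every other image value has exactly three preimages. Since $n$ is even, Corollary~\ref{th_iff} now applies and yields that $f$ is APN, completing the proof. The only real obstacle is the correct invocation of Theorem 6 of \cite{charpin2008class}; once $f'$ is known to be a bijection, the APN conclusion is an immediate consequence of the already-developed framework.
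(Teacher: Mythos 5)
Your proposal is correct and follows essentially the same route as the paper: the permutation property of $f'$ is obtained by invoking Theorem 6 of \cite{charpin2008class} with $s=3$ and $L(x)=x^2+\alpha x$, and the APN property then follows because $f=f'(x^3)$ is a $3$-divisible DO polynomial that is almost-$3$-to-$1$ (as the composition of the almost-$3$-to-$1$ cube map with a bijection), so Corollary~\ref{th_iff} applies. Your write-up in fact supplies the routine verifications (that $f$ is DO and almost-$3$-to-$1$) more explicitly than the paper, which leaves them to the surrounding discussion.
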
 \qed

An APN map constructed in  Theorem~\ref{th:apn_chk} is affine equivalent to one of form 
$x^3 + \alpha\Tr(\alpha^{-3}x^9)$ studied in \cite{budaghyan2009construction}.
Indeed, the map $f'(x)$ can be reduced to 
$$
f'(x) = x^2 + \alpha x + \gamma \Tr(\beta x) + \gamma \Tr(\alpha^{-3}x^3) = L_1(x)+\gamma \Tr(\alpha^{-3}x^3),
$$
where $L_1$ is linear over $\F_2$. Using \cite[Theorem 5]{charpin2008class} the map $L_1$
is bijective. Then $L_1^{-1}$ composed with $f'(x)$ yields
$$
L_1^{-1} \circ f'(x) =  x + \Tr(\alpha^{-3}x^3)L_1^{-1}(\gamma),
$$
and thus
$$
L_1^{-1} \circ f'(x^3)  =  x^3 + \Tr(\alpha^{-3}x^9)L_1^{-1}(\gamma) = x^3+\alpha \Tr(\alpha^{-3}x^9),
$$
where for the last equality we used $L_1(\alpha) = \gamma$. Note that this reduction remains true for $n$ odd too,
showing that the examples of Theorem~\ref{th:apn_chk} are APN  for any $n$.\\

As we mentioned earlier the polynomials $x^3$ and $x^3+\Tr(x^9)$ define APN maps on $\F_{2^n}$ for every $n\geq 1$.
For $n$ odd, the first map is a permutation and the second is 2-to-1, as the following result shows. 

\begin{proposition}
Let $n$ be odd and $a \in \F_{2^n}$ non-zero.
Then the  APN map $x \mapsto x^3 + a^{-1}\Tr(a^3x^9)$ is 2-to-1.
\end{proposition}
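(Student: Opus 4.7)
The plan is to analyze directly when $f(x)=f(y)$ with $x\neq y$. Writing
\[
f(x)+f(y) \;=\; x^3+y^3 + a^{-1}\,\Tr\!\big(a^3(x^9+y^9)\big),
\]
and noting that the trace value lies in $\F_2$ while cubing is a bijection on $\F_{2^n}$ for odd $n$ (since $\gcd(3,2^n-1)=1$), the sum can vanish with $x\neq y$ only if $\Tr(a^3(x^9+y^9))=1$ and $x^3+y^3=a^{-1}$. This reduction is the guiding observation.

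I would then rule out images with three or more preimages: if distinct $x_1,x_2,x_3$ all share the same $f$-value, the three pairwise identities $x_i^3+x_j^3=a^{-1}$ combine, upon adding two of them, to $x_j^3+x_k^3=0$, contradicting $x_j\neq x_k$. Thus $M_r(f)=0$ for every $r\geq 3$, and only $M_1$ and $M_2$ can be nonzero.

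Next I would parameterize collisions by $s=x+y\neq 0$, $p=xy$. The equation $x^3+y^3=a^{-1}$ becomes $s^3+ps=a^{-1}$, i.e.\ $p=a^{-1}/s+s^2$. For $x,y\in\F_{2^n}$ to actually exist, the quadratic $t^2+st+p=0$ must split, which in characteristic $2$ means $\Tr(p/s^2)=0$; substituting $p$ and using $\Tr(1)=1$ (as $n$ is odd) this collapses to $\Tr(a^{-1}/s^3)=1$. On the other hand, expanding $x^9+y^9=(x^3+y^3)^3+x^3y^3(x^3+y^3)=a^{-3}+p^3 a^{-1}$ and computing
\[
\Tr\!\big(a^3(x^9+y^9)\big) \;=\; 1 + \Tr(a^2 p^3),
\]
together with the Frobenius identity $\Tr(a^2 s^6)=\Tr(as^3)$, shows after a direct expansion of $p^3$ that the trace condition $\Tr(a^3(x^9+y^9))=1$ likewise collapses to $\Tr(a^{-1}/s^3)=1$. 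The main obstacle is precisely this double collapse: that both the collision condition and the quadratic-solvability condition reduce to the same trace equation, and this is where the specific coefficient $a^{-1}$ in front of the trace in $f$ is essential.

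Finally, since $s\mapsto a^{-1}/s^3$ is a bijection on $\F_{2^n}^*$ and exactly $2^{n-1}$ elements of $\F_{2^n}^*$ have trace $1$ (as $n$ is odd), there are exactly $2^{n-1}$ admissible values of $s$, each producing one unordered collision pair. Hence $M_2(f)=2^{n-1}$, and combining with $M_r(f)=0$ for $r\geq 3$ and the identity $M_1(f)+2M_2(f)=2^n$ forces $M_1(f)=0$. Thus $|\Image(f)|=2^{n-1}$ and $f$ is $2$-to-$1$.
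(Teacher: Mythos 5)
Your proof is correct, and I verified the two computations on which it hinges: with $s=x+y$, $p=xy=a^{-1}/s+s^2$, one gets $p/s^2=a^{-1}s^{-3}+1$ so solvability of $t^2+st+p=0$ becomes $\Tr(a^{-1}/s^3)=1$, and $a^2p^3=a^{-1}s^{-3}+1+as^3+a^2s^6$ so, using $\Tr(a^2s^6)=\Tr(as^3)$ and $\Tr(1)=1$, the collision condition $\Tr(a^3(x^9+y^9))=1$ becomes the same equation; the count of admissible $s$ is $2^{n-1}$ because $s\mapsto a^{-1}s^{-3}$ permutes $\F_{2^n}^*$ for $n$ odd. However, your route is genuinely different from the paper's, which is a two-line argument: writing $f(x)=f'(x^3)$ with $f'(y)=y+a^{-1}\Tr(a^3y^3)$ (legitimate since cubing is a bijection for $n$ odd), the paper observes that $a^{-1}$ is a \emph{$1$-linear structure} of the Boolean function $\gamma(y)=\Tr(a^3y^3)$, i.e.\ $\gamma(y+a^{-1})+\gamma(y)=\Tr(a^2y^2+ay+1)=\Tr(1)=1$ for all $y$, whence $f'(y+a^{-1})=f'(y)$ identically; since $f'(y)\in\{y,y+a^{-1}\}$, every fiber is exactly the coset $\{y,y+a^{-1}\}$ and $f'$ is $2$-to-$1$ by a cited theorem of Charpin--Kyureghyan on maps $G(x)+\Tr(H(x))$. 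The paper's argument is shorter and exposes the structural reason (translation invariance of $f'$ by $a^{-1}$, equivalently your identity $x^3+y^3=a^{-1}$ on each fiber), while your argument is self-contained, needs no external result, and produces the fiber partition and the value of $N(f)=2^{n+1}$ explicitly; your preliminary observation that no element can have three preimages is a nice shortcut that lets the final count close without further case analysis. One stylistic remark: in the last step it is worth saying explicitly that distinct admissible $s$ yield disjoint pairs (immediate since $s$ is recovered as $x+y$) and that, because $M_r(f)=0$ for $r\geq 3$, these $2^{n-1}$ disjoint pairs are precisely the fibers of size two, which is what converts the pair count into $M_2(f)=2^{n-1}$.
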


\begin{proof}
This follows from Theorem 3 in \cite{charpin-kyureg-ffa}, since $a^{-1}$ is a $1$-linear structure of $\Tr(a^3x^3)$ for $n$ odd,
which can be easily checked by direct calculations.
	\qed
\end{proof}

We believe that if $n$ is odd, then $2^{n-1}$ is the minimal possible image size of an APN DO polynomial of shape 
$L_1(x^3) +L_2(x^9)$.
 However, an analog of Corollary~\ref{th_iff} is not true for 
the size $2^{n-1}$.
There are such DO polynomials with image size $2^{n-1}$, which are not APN.
For example, if $n$ is odd,  the DO polynomial $(x^2 + x)\circ x^3$ is 2-to-1, but not APN.\\

\noindent

Next we observe that for $n$ odd there are APN DO polynomials of shape $L_1(x^3)+L_2(x^9)$, that are neither bijective nor have image size $2^{n-1}$. 
For a divisor $t$ of $n$ we denote by {$\Tr_{2^n/2^t}(x)$} the trace map from $\F_{2^n}$ into the subfield 
$\F_{2^t}$, that is
{\[
	\Tr_{2^n/2^t}(x) = \sum_{k=0}^{n/t}x^{\left({2^t}\right)^k}. 
\]}

In \cite{budaghyan2009construction}, it is shown that for any non-zero $a \in \F_{2^{3m}}$, $m $ arbitrary, the DO polynomials
$$
f_1(x) = f'_1(x^3) = x^3+a^{-1}\Tr_{2^{3m}/2^3}(a^3x^9+a^6x^{18})
$$
and 
$$
f_2(x) = f'_2(x^3) = x^3+a^{-1}(\Tr_{2^{3m}/2^3}(a^3x^9+a^6x^{18}))^2
$$
define APN maps on $\F_{2^{3m}}$. Moreover, the maps $f'_1$ and $f'_2$ are bijective when  $m$ is even. 
For $m$ odd, the image sets of these maps contain $5\cdot 2^{3m-3}$  elements, as Propositions~\ref{prop:image-new1} and
\ref{prop:image-new2} show. 

\begin{proposition}\label{prop:image-new1}
Let $m$ be an odd integer and $a\in\F_{2^{3m}}^*$ be arbitrary.
Then the APN map $f:\F_{2^{3m}} \to \F_{2^{3m}}$ given by
\[
	f(x) = x^3+a^{-1}\Tr_{2^{3m}/2^3}(a^3x^9+a^6x^{18})
\]
satisfies $M_1(f) = 2^{3m-1}$, $M_4(f)=2^{3m-3}$. In particular $|\Image(f)|=5\cdot 2^{3m-3}$. 
\end{proposition}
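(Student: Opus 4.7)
The plan is to reduce the question to a simple computation on the subfield $\F_8$. Since $m$ is odd, $\gcd(3, 2^{3m}-1) = 1$, so $x \mapsto x^3$ is a bijection of $\F_{2^{3m}}$; hence $M_r(f) = M_r(f')$ for every $r$, where $f'(y) := y + a^{-1} T(y)$ and $T(y) := \Tr_{2^{3m}/2^3}(a^3 y^3 + a^6 y^6) \in \F_8$. Because $f'(y) - y \in a^{-1}\F_8$, the map $f'$ preserves each of the $2^{3m-3}$ cosets of the $\F_2$-subspace $a^{-1}\F_8 \subset \F_{2^{3m}}$, so the preimage distribution of $f'$ is the sum of those of the restrictions $f'|_C$.

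Fix a representative $y_0$ of a coset $C = y_0 + a^{-1}\F_8$ and parametrize $C$ by $c \in \F_8$ via $y = y_0 + a^{-1}c$. Using the $\F_8$-linearity of $\Tr_{2^{3m}/2^3}$, the identity $\Tr_{2^{3m}/2^3}(x^{2^i}) = \Tr_{2^{3m}/2^3}(x)^{2^i}$, and the fact that $\Tr_{2^{3m}/2^3}$ fixes $\F_8$ pointwise for $m$ odd, a direct expansion yields $f'(y_0 + a^{-1}c) = y_0 + a^{-1}\hat\psi(c)$ with
\[
\hat\psi(c) = c^6 + u^2 c^4 + c^3 + (u+u^4)c^2 + (u^2+1)c + (v+v^2),
\]
where $u = \Tr_{2^{3m}/2^3}(a y_0)$ and $v = \Tr_{2^{3m}/2^3}(a^3 y_0^3)$ both lie in $\F_8$. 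Thus $f'|_C$ and $\hat\psi : \F_8 \to \F_8$ have identical preimage distributions. The crucial observation is that the substitution $c = d + u$ kills both the $d^4$ and $d^2$ coefficients and reduces $\hat\psi$ to
\[
\hat\psi(d+u) = d^6 + d^3 + d + K(u,v)
\]
for some constant $K(u,v) \in \F_8$, so the preimage distribution of $\hat\psi$ equals that of $\phi(d) := d^6 + d^3 + d$ on $\F_8$, independently of the coset.

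To analyze $\phi$, I would use the identity $d^4 = d + d^2 + \Tr_{2^3/2}(d)$ valid for $d \in \F_8$; multiplying by $d^2$ and substituting into $\phi$ gives
\[
\phi(d) = d^2\bigl(1+\Tr_{2^3/2}(d)\bigr) + \Tr_{2^3/2}(d).
\]
Hence $\phi(d) = d^2$ on the $4$-element kernel of $\Tr_{2^3/2}$ and $\phi(d) = 1$ on its $4$-element complement. Since squaring is a bijection of the kernel, $\phi$ attains the value $1$ with multiplicity $4$ and each of the other $4$ values in its image with multiplicity $1$, giving $M_1(\phi) = 4$ and $M_4(\phi) = 1$. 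Summing over the $2^{3m-3}$ cosets produces $M_1(f') = 2^{3m-1}$ and $M_4(f') = 2^{3m-3}$, and the cube reduction transfers these counts to $f$, yielding $|\Image(f)| = M_1(f) + M_4(f) = 5 \cdot 2^{3m-3}$. The main obstacle is the characteristic-$2$ expansion behind the cancellation $\hat\psi(d+u) = d^6 + d^3 + d + K(u,v)$; once that is verified, the description of $\phi$ is essentially automatic.
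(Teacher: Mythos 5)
Your argument is correct; I verified the key cancellation $\hat\psi(d+u)=d^6+d^3+d+K(u,v)$ (with $K(u,v)=u^6+u^3+u+v+v^2$) and the value distribution of $\phi(d)=d^6+d^3+d$ on $\F_8$, so the counts $M_1(\phi)=4$, $M_4(\phi)=1$ per coset are right, and the coset images are disjoint because $f'(C)\subseteq C$. The route is genuinely different from the paper's, though both start the same way (reduce to $f'(y)=y+a^{-1}\Tr_{2^{3m}/2^3}(a^3y^3+a^6y^6)$ via bijectivity of cubing, and both ultimately exploit that collisions are confined to cosets of $a^{-1}\F_8$). The paper analyzes the \emph{difference equation}: it shows $f'(x)=f'(y)$ forces $a(x+y)\in\ker\Tr_{2^3/2}=\{0,\beta,\beta^2,\beta^4\}$ and then, for each nonzero shift $z=a^{-1}\beta^k$, computes that $f'(x)=f'(x+z)$ holds precisely when $\Tr_{2^{3m}/2}(ax)=1$; the fibers of size $4$ are thus the full $a^{-1}\F_8$-cosets lying in the affine hyperplane $\Tr_{2^{3m}/2}(ax)=1$, and $f'$ is injective on the complementary hyperplane. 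You instead compute the entire restriction $f'|_C$ as an explicit degree-$6$ polynomial on $\F_8$ and observe that, after the translation $c=d+u$, it is the \emph{same} polynomial $\phi$ on every coset; the two pictures agree, since your condition $\Tr_{2^3/2}(d)=1$ unwinds exactly to $\Tr_{2^{3m}/2}(ay)=1$. Your version buys a cleaner structural statement (the induced map on every coset is affinely the fixed map $\phi$, whose distribution is read off from $\phi(d)=d^2(1+\Tr_{2^3/2}(d))+\Tr_{2^3/2}(d)$), at the cost of a somewhat heavier polynomial expansion; the paper's version gets away with computing only the difference $f'(x)+f'(x+z)$, which is shorter but yields less information about the map itself.
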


\begin{proof}
We consider the equation $f(x)=f(y)$ on $\F_{2^{3m}}$. Since $x\mapsto x^3$ is a permutation on $\F_{2^n}$ with  $n$ odd, 
	it is sufficient to look at $f'(x)=f'(y)$, where
	\[ 
	f'(x) = x+a^{-1}\Tr_{2^{3m}/2^3}(a^3x^3+a^6x^6),
\]
	and  $f(x)=f'(x^3)$.
	Suppose $f'(x)=f'(y)$. Then
	\[
		x+a^{-1}\Tr_{2^{3m}/2^3}(a^3x^3+a^6x^6) = y+a^{-1}\Tr_{2^{3m}/2^3}(a^3y^3+a^6y^6)
	\]
	or equivalently,
	\begin{equation}\label{eq:x+y}
		\Tr_{2^{3m}/2^3}(a^3x^3+a^6x^6 + a^3y^3+a^6y^6) = a(x+y).
	\end{equation}
	In particular, $f'(x) = f'(y)$ only if $a(x+y) \in \F_8$.
	Let $z=x+y$. Taking the absolute trace on both sides of (\ref{eq:x+y}), we get
	\[
		\Tr_{2^3/2}(az)=\Tr_{2^{3m}/2}(a^3x^3+(a^3x^3)^2 + a^3(x+z)^3+(a^3(x+z)^3)^2) = 0.
	\]
	Let $\beta\in\F_8$ with $\beta^3=\beta+1$, then
	\[
		\Tr_{2^3/2}(\beta)=\Tr_{2^3/2}(\beta^2)=\Tr_{2^3/2}(\beta^4)=0,
	\]
	so that
	\[
		az\in\{0, \beta, \beta^2, \beta^4\}.
	\]
	If $az=0$ we have $z=0$ and $x=y$. So let $z=a^{-1}\beta^k$ with $k\in\{1,2,4\}$. Note that $x\mapsto x^k$ is a linear permutation on $\F_{2^{3m}}$. We have
	\begin{align*}
		& a^3x^3+a^6x^6+a^3(x+z)^3+a^6(x+z)^6 \\
		&=a^3x^3+a^6x^6+a^3(x^3+x^2z+xz^2+z^3)+a^6(x^6+x^4z^2+x^2z^4+z^6)\\
		&=a^3x^2z+a^3xz^2+a^3z^3+a^6x^4z^2+a^6x^2z^4+a^6z^6 \\
		&=a^2x^2\beta^k+ax\beta^{2k}+\beta^{3k}+a^4x^4\beta^{2k}+a^2x^2\beta^{4k}+\beta^{6k} \\
		&=(\beta^{3k}+\beta^{6k})+ax\beta^{2k}+a^2x^2(\beta^k+\beta^{4k})+a^4x^4\beta^{2k}.
	\end{align*}
	As $\beta^3=\beta+1$, we get $\beta^6=\beta^2+1$ and therefore $\beta^3+\beta^6=\beta^2+\beta=\beta^4$. Further $\beta+\beta^4=\beta^2$, so that
	\begin{equation}\label{eq2imageproof}
		 a^3x^3+a^6x^6+a^3(x+z)^3+a^6(x+z)^6
		=\beta^{4k}+\beta^{2k}(ax+(ax)^2+(ax)^4).
	\end{equation}
	We now need to ensure that \eqref{eq:x+y} holds. Using \eqref{eq2imageproof} and $m$ odd this turns into
	\begin{align*}
		\beta^k &=\Tr_{2^{3m}/2^3}(\beta^{4k}+\beta^{2k}(ax+(ax)^2+(ax)^4)) \\
						&=\beta^{4k}+\beta^{2k}\Tr_{2^{3m}/2^3}(ax+(ax)^2+(ax)^4) = \beta^{4k}+\beta^{2k}\Tr_{2^{3m}/2}(ax).
						 \\
						&=(\beta^4+\beta^2\Tr_{2^{3m}/2}(ax))^k
	\end{align*}
	Using again that $x\mapsto x^k$ is a permutation and that $\beta^4=\beta^2+\beta$ we obtain
	\[
		\beta^2(\Tr_{2^{3m}/2}(ax)+1)=0,
	\]
	which has a solution $x$ if and only if $\Tr_{2^{3m}/2}(ax)=1$.
	
	Concluding, we have $f'(x)=f'(x+z)$ if and only if $\Tr_{2^{3m}/2}(ax)=1$ and  $z\in\{0, a^{-1}\beta, a^{-1}\beta^2, a^{-1}\beta^4\}$. Since there are $2^{3m-1}$ elements $x$ with $\Tr_{2^{3m}/2}(ax)=1$, we get $M_4(f)=2^{3m-3}$. The map $f'$
	is injective on the hyperplane $\{x \in \F_{2^{3m}} ~|~ \Tr_{2^{3m}/2}(ax)=0 \}$, yielding $M_1(f)=2^{3m-1}$.
		\qed
\end{proof} 

The proof of next result is almost identical to the one of Proposition~\ref{prop:image-new1}: 

\begin{proposition} \label{prop:image-new2}
Let $m$ be an odd integer and $a\in\F_{2^{3m}}^*$ be arbitrary.
Then the APN map $f:\F_{2^{3m}} \to \F_{2^{3m}}$ given by 
\[
	f(x) = x^3+a^{-1}\Tr_{2^{3m}/2^3}(a^3x^9+a^6x^{18})^2
\]
satisfies $M_1(f) = 2^{3m-1}$, $M_4(f)=2^{3m-3}$. In particular $|\Image(f)|=5\cdot 2^{3m-3}$. 
\end{proposition}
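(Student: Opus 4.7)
The plan is to follow the proof of Proposition~\ref{prop:image-new1} almost verbatim, modifying only the handful of steps where the extra square on the trace term matters. As before, since $3m$ is odd, $x\mapsto x^3$ is a permutation of $\F_{2^{3m}}$, so it suffices to analyze $f'(x)=f'(y)$, where $f(x)=f'(x^3)$ and
\[
f'(u) \;=\; u + a^{-1}\bigl[\Tr_{2^{3m}/2^3}(a^3 u^3 + a^6 u^6)\bigr]^2.
\]
Using that squaring is $\F_2$-linear in characteristic two, the equation $f'(x)=f'(y)$ rearranges, with $z=x+y$, to
\[
T^2 \;=\; a z, \qquad T \;=\; \Tr_{2^{3m}/2^3}(a^3x^3+a^6x^6 + a^3y^3+a^6y^6)\in\F_8.
\]
In particular $az\in\F_8$, and taking the absolute trace from $\F_8$ to $\F_2$ gives $\Tr_{2^3/2}(az) = \Tr_{2^3/2}(T^2) = \Tr_{2^3/2}(T) = 0$ by exactly the argument used in the previous proof (the key point is that the absolute trace of $v+v^2$ over $\F_2$ vanishes). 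Hence $az\in\{0,\beta,\beta^2,\beta^4\}$, with $\beta\in\F_8$ satisfying $\beta^3=\beta+1$.

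For $az=\beta^k$ with $k\in\{1,2,4\}$, I plan to reuse the computation culminating in equation~\eqref{eq2imageproof} of Proposition~\ref{prop:image-new1}, which does not involve the outer square at all and yields
\[
T \;=\; \beta^{4k} + \beta^{2k}\,\Tr_{2^{3m}/2}(ax).
\]
Setting $s=\Tr_{2^{3m}/2}(ax)\in\F_2$, the identities $\beta^{8k}=\beta^k$ and $s^2=s$ give $T^2 = \beta^k + \beta^{4k}s$, so the equation $T^2=\beta^k$ is equivalent to $s=0$, that is, $\Tr_{2^{3m}/2}(ax)=0$. Note this is precisely the \emph{complementary} hyperplane to the one found in Proposition~\ref{prop:image-new1}; the new outer square has the net effect of flipping the trace condition from $1$ to $0$, while leaving its cardinality $2^{3m-1}$ unchanged.

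The rest is bookkeeping identical to the previous proof: on this hyperplane $f'$ identifies the four elements $x, x+a^{-1}\beta, x+a^{-1}\beta^2, x+a^{-1}\beta^4$, so $f'$ (equivalently $f$) is $4$-to-$1$ there, giving $M_4(f)=2^{3m-1}/4=2^{3m-3}$; on the complementary hyperplane $f'$ is injective, contributing $M_1(f)=2^{3m-1}$. Therefore $|\Image(f)|=M_1(f)+M_4(f)=2^{3m-1}+2^{3m-3}=5\cdot 2^{3m-3}$. I anticipate no real obstacle: the only non-routine step is recognizing that the outer square combines cleanly via $(T_x+T_y)^2 = T_x^2+T_y^2$ in characteristic two, reducing the problem at once to the scalar equation $T^2=az$ whose $\F_8$-level analysis is effectively the same as in Proposition~\ref{prop:image-new1}.
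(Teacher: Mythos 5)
Your proposal is correct and is precisely the adaptation of the proof of Proposition~\ref{prop:image-new1} that the paper has in mind (the paper omits the details, stating only that the argument is ``almost identical''). You correctly identify the single substantive change: the outer square turns the collision condition into $T^2=az$, and the computation $T^2=\beta^k+\beta^{4k}s$ flips the defining trace condition of the $4$-to-$1$ hyperplane from $\Tr_{2^{3m}/2}(ax)=1$ to $\Tr_{2^{3m}/2}(ax)=0$, leaving all cardinalities, and hence $M_1(f)=2^{3m-1}$, $M_4(f)=2^{3m-3}$, unchanged.
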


\section{Relations between the image sets of APN maps and their Walsh spectrum}

Let $f \colon \F_{2^n} \rightarrow \F_{2^n}$. The Boolean fuctions $f_\lambda(x) = \Tr(\lambda f(x))$ with $\lambda \in \F_{2^n}^*$ are called the component functions of $f$. We call $f_\lambda$ a balanced component of $f$, if it takes the values $0$ and $1$ equally often, that is both $2^{n-1}$ times.
The Walsh transform of $f$ is defined by
	\[W_f(b,a)=\sum_{x \in \F_{2^n}}(-1)^{\Tr(bf(x)+ax)} \in \mathbb{Z},\]
	where $a,b \in \F_{2^n}, b\ne 0$.
	The multiset $\{*W_f(b,a) \colon b \in \F_{2^n}^*, a \in \F_{2^n}*\}$ is called the Walsh spectrum of $f$ and $\{*|W_f(b,a)| \colon b \in \F_{2^n}^*, a \in \F_{2^n}*\}$ is called the extended Walsh spectrum of $f$. 
	\begin{definition}
	Let $f \colon \F_{2^n} \rightarrow \F_{2^n}$, $n$ even. We say that the map $f$ has the classical Walsh spectrum if
	$$|W_f(b,a)| \in \{0,2^{n/2},2^{(n+2)/2}\}$$
	for any $b \in \F_{2^n}^*, a \in \F_{2^n}$ and the extended Walsh spectrum of $f$ contains 
	the values $0,2^{n/2},2^{(n+2)/2}$ precisely $(2^n-1)\cdot 2^{n-2}$-times, $(2/3)(2^n-1)(2^n)$-times and $(1/3)(2^n-1)(2^{n-2})$-times, respectively.
	\end{definition}
	Most of the known APN maps in even dimension have the classical Walsh spectrum, including the monomial APN maps with Gold and
Kasami exponents. There are  APN maps with non-classical 
Walsh spectra, see e.g. \cite{beierle-leander2020} for  such examples. 

\begin{definition}
	Let $f \colon \F_{2^n} \rightarrow \F_{2^n}$. A component function $f_\lambda$ is called plateaued with amplitude $t$  for an integer $t \geq 0$ if $W_f(\lambda,a) \in \{0,\pm 2^{\frac{n+t}{2}}\}$ for all $a \in \F_{2^n}$. If all component functions of $f$ are plateaued,  $f$ is called component-wise plateaued. 
	For $n$ odd the map $f$ is called almost bent if all its components $f_\lambda$ are plateaued with $t=1$.  For $n$ even, a plateaued component with $t=0$ is called a bent component.
\end{definition}

  It is well known that an almost bent map is necessarily  APN, and there are APN maps,
	which are not almost bent. 
Quadratic maps are always component-wise plateaued.  Also a crooked map, which is defined by the property that 
all its  differential sets are affine hyperplanes, is component-wise plateaued \cite{bending-crooked,kyureg-crooked}. Properties
of component functions of crooked maps are studied in \cite{charpin-crooked}.
Further examples of component-wise 
 plateaued maps can be found in \cite{carletplateaued}.\\

\noindent 
The next result gives a sufficient condition for a map to be APN in terms of its component functions.
\begin{proposition} [{\cite[Corollary 3]{berger2006almost}}]\label{prop:bentcomponents}
	Let $n$ be even and $f:\F_{2^n}\to \F_{2^n}$ be a component-wise plateaued map. If $f$ has $(2/3)(2^n-1)$ bent components and $(1/3)(2^n-1)$ components with amplitude $t=2$ then $f$ is APN.
\end{proposition}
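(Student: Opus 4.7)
The plan is to use the standard fourth-moment argument relating the Walsh transform of $f$ to the sum $\sum_{a,b}\delta_f(a,b)^2$, where $\delta_f(a,b)=|\{x\in\F_{2^n}:f(x+a)+f(x)=b\}|$. First I would establish the identity
\[
\sum_{\lambda\in\F_{2^n}}\sum_{a\in\F_{2^n}}W_f(\lambda,a)^4
   = 2^{2n}\sum_{a,b\in\F_{2^n}}\delta_f(a,b)^2,
\]
which follows from expanding the fourth power and summing first over $a$ and then over $\lambda$, each sum producing a delta-function constraint that leaves exactly the $4$-tuples $(x_1,x_2,x_3,x_4)$ with $\sum x_i=0$ and $\sum f(x_i)=0$.

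Next I would isolate the contribution of $\lambda=0$, which gives $W_f(0,a)=2^n\cdot\mathbf{1}_{a=0}$ and hence contributes $2^{4n}$ to the left-hand side. On the right, the $a=0$ summand contributes $2^{2n}$ to $\sum_{a,b}\delta_f(a,b)^2$. For $a\neq0$ one uses the elementary inequality $\delta^{2}\le 2\delta$, valid because $\delta_f(a,b)$ is even, together with $\sum_{a\neq0,b}\delta_f(a,b)=2^{2n}-2^n$, to obtain
\[
\sum_{a\neq0,b}\delta_f(a,b)^2\ \ge\ 2(2^{2n}-2^n),
\]
with equality precisely when $f$ is APN. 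Combining these gives the classical lower bound
\[
\sum_{\lambda\neq0,a}W_f(\lambda,a)^4\ \ge\ 2^{4n+1}-2^{3n+1},
\]
attained if and only if $f$ is APN.

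The final step is to compute the left-hand side under the stated hypothesis. A plateaued component $f_\lambda$ of amplitude $t$ satisfies $W_f(\lambda,a)\in\{0,\pm 2^{(n+t)/2}\}$, and Parseval's identity $\sum_a W_f(\lambda,a)^2=2^{2n}$ forces exactly $2^{n-t}$ values of $a$ with $W_f(\lambda,a)\neq 0$. Hence $\sum_a W_f(\lambda,a)^4=2^{n-t}\cdot 2^{2n+2t}=2^{3n+t}$. Applied to the $(2/3)(2^n-1)$ bent components ($t=0$) and the $(1/3)(2^n-1)$ components of amplitude $t=2$ this gives
\[
\sum_{\lambda\neq0,a}W_f(\lambda,a)^4
   =\tfrac{2}{3}(2^n-1)\cdot 2^{3n}+\tfrac{1}{3}(2^n-1)\cdot 2^{3n+2}
   =2(2^n-1)\cdot 2^{3n}=2^{4n+1}-2^{3n+1},
\]
matching the lower bound exactly. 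Consequently the equality case holds and $f$ is APN.

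\paragraph{Expected obstacle.} The proof is almost bookkeeping once the fourth-moment identity is in place; the only subtle point I would double-check is the counting of nonzero Walsh values via Parseval for plateaued components, since an off-by-one in the exponent $t$ would ruin the cancellation that makes the two sides agree.
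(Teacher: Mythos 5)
Your argument is correct and complete. The paper itself gives no proof of this proposition --- it is stated with a citation to Berger, Canteaut, Charpin and Laigle-Chapuy --- so there is nothing internal to compare against; your fourth-moment computation is exactly the classical route (the Chabaud--Vaudenay/Nyberg identity relating $\sum_{\lambda,a}W_f(\lambda,a)^4$ to $\sum_{a,b}\delta_f(a,b)^2$, combined with Parseval to count the support of each plateaued component) that underlies the cited result. All the bookkeeping checks out: the $\lambda=0$ and $a=0$ contributions are $2^{4n}$ and $2^{2n}\cdot 2^{2n}$ respectively, the support size $2^{n-t}$ gives $\sum_a W_f(\lambda,a)^4=2^{3n+t}$, and $\tfrac{2}{3}(2^n-1)2^{3n}+\tfrac{1}{3}(2^n-1)2^{3n+2}=2^{4n+1}-2^{3n+1}$ meets the APN lower bound exactly. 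One typographical slip: you wrote the elementary inequality as $\delta^2\le 2\delta$, but what you need (and what you actually use, since your displayed conclusion has the correct direction $\ge$) is $\delta^2\ge 2\delta$, which holds for every nonnegative even integer $\delta$ with equality iff $\delta\in\{0,2\}$; fix the inequality sign in that sentence.
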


The Parseval equation states
$$
\sum_{a\in\F_{2^n}}W_f(b,a)^2 =2^{2n}
$$ 
for any $b \in \F_{2^n}$. It implies that a component-wise plateaued map $f$ with $(2/3)(2^n-1)$ bent components and $(1/3)(2^n-1)$ plateaued components with amplitude $t=2$ has always the classical Walsh spectrum.\\

{
To prove the next theorem we use the following well known lemma.
\begin{lemma}\label{lem:gcds}
	Let $i,r \in \mathbb{N}$ be arbitrary. Then
	\begin{itemize}
		\item $\gcd(2^i-1,2^r+1) = \begin{cases} 2^{\gcd(i,r)} +1 & \text{if } i/\gcd(i,r) \text{ is even} \\
		1 & \text{else.} 
		\end{cases}$
		\item $\gcd(2^i+1,2^r+1) = \begin{cases} 2^{\gcd(i,r)} +1 & \text{if } i/\gcd(i,r) \text{ and } r/\gcd(i,r)\text{ are odd} \\
		1 & \text{else.} 
		\end{cases}$
	\end{itemize}
	\end{lemma}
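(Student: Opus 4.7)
The plan is to reduce both identities to the classical formula $\gcd(2^a-1,2^b-1)=2^{\gcd(a,b)}-1$ by exploiting the factorization $2^{2k}-1=(2^k-1)(2^k+1)$, together with the basic coprimality $\gcd(2^k-1,2^k+1)=1$ (which holds since $2^k-1$ is odd). The main recurring tool will be the multiplicativity $\gcd(A,BC)=\gcd(A,B)\,\gcd(A,C)$ whenever $\gcd(B,C)=1$.

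For part~(1), I would apply this multiplicativity with $A=2^i-1$, $B=2^r-1$, $C=2^r+1$, which gives
\[
  \gcd(2^i-1,\,2^{2r}-1) \;=\; \gcd(2^i-1,\,2^r-1)\cdot\gcd(2^i-1,\,2^r+1),
\]
so that, writing $d=\gcd(i,r)$, the target gcd equals
\[
  \gcd(2^i-1,\,2^r+1) \;=\; \frac{2^{\gcd(i,2r)}-1}{2^d-1}.
\]
A short $2$-adic analysis then completes the case distinction: $\gcd(i,2r)=d$ precisely when $v_2(i)\le v_2(r)$ (equivalently $i/d$ is odd), and $\gcd(i,2r)=2d$ otherwise. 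The ratio thereby simplifies either to $1$ or to $(2^{2d}-1)/(2^d-1)=2^d+1$, matching the claim.

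For part~(2), the plan is to apply the same coprime-product trick to $2^{2i}-1=(2^i-1)(2^i+1)$ and then reuse part~(1). Since these two factors are coprime,
\[
  \gcd(2^{2i}-1,\,2^r+1) \;=\; \gcd(2^i-1,\,2^r+1)\cdot\gcd(2^i+1,\,2^r+1).
\]
The left-hand side is computed exactly as in part~(1) with $i$ replaced by $2i$, yielding $2^d+1$ when $r/d$ is odd and $1$ otherwise. Dividing by the value of $\gcd(2^i-1,2^r+1)$ furnished by part~(1), and noting that $\gcd(i/d,r/d)=1$ forbids $i/d$ and $r/d$ from both being even, a short three-case analysis produces the claimed value of $\gcd(2^i+1,2^r+1)$.

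I do not expect a genuine obstacle: every gcd that appears reduces to an instance of $\gcd(2^a-1,2^b-1)=2^{\gcd(a,b)}-1$, and the only real bookkeeping needed is careful tracking of the $2$-adic valuations. The step most prone to slips will be determining $\gcd(i,2r)$ and $\gcd(2i,r)$ from $v_2(i)$ and $v_2(r)$, so I would carry that out by directly comparing the two valuations rather than manipulating the ratios $i/d$ and $r/d$ in disguise.
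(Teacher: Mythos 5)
Your proposal is correct. Note that the paper itself offers no proof of this lemma: it is stated as ``well known'' and closed immediately with \qed, so there is no argument to compare yours against. Your derivation is a clean and complete one. The two ingredients you rely on both hold: $\gcd(2^k-1,2^k+1)=1$ since both factors are odd and differ by $2$, and $\gcd(A,BC)=\gcd(A,B)\gcd(A,C)$ for coprime $B,C$ (check it prime by prime). From $\gcd(2^i-1,2^{2r}-1)=2^{\gcd(i,2r)}-1$ the quotient $(2^{\gcd(i,2r)}-1)/(2^{\gcd(i,r)}-1)$ is indeed $2^{\gcd(i,r)}+1$ or $1$ according to whether $\gcd(i,2r)=2\gcd(i,r)$ or $\gcd(i,r)$, and your valuation criterion ($i/d$ even iff $2d\mid i$, with $d=\gcd(i,r)$) is the right one. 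In part~(2) the bookkeeping also works out: $\gcd(2^{2i}-1,2^r+1)$ equals $2^d+1$ exactly when $r/d$ is odd (if $r/d$ is even then coprimality of $i/d$ and $r/d$ forces $i/d$ odd, so $2i/\gcd(2i,r)=i/d$ is odd and the gcd is $1$), and dividing by the part~(1) value in the three admissible parity cases reproduces the stated formula. The only step worth writing out fully in a final version is that three-case division at the end, which you have correctly flagged.
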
 \qed
	The next theorem shows that almost-$(2^r+1)$-to-$1$ component-wise plateaued maps have a very special
	Walsh spectrum. The key step in its proof is the fact that  the components of such maps have weights
	divisible by ${2^r+1}$. This together with Lemma \ref{lem:gcds} and some basic identities for Walsh
	values allow to control the Walsh spectrum of $f$.} {Our proof is  an adaption of the one of Theorem 2
	from \cite{carlet2016quadratic}, where $(p^r+1)$-divisible quadratic maps of finite fields with
	an arbitrary characteristic $p$ are considered.}

{	
\begin{theorem}\label{thm:k-to-1}
	Let $n=2rm$ and  $f:\F_{2^n}\to \F_{2^n}$ be an almost-$(2^r+1)$-to-$1$ component-wise plateaued  map with $f(0)=0$ and $\omega(0)=1$, i.e. $0$ be the unique element with precisely one preimage. 
	 Then $f$ has $(2^r/(2^r+1))\cdot(2^n-1)$ bent components and $(2^n-1)/(2^r+1)$ components with amplitude $t=2r$. 
	Moreover, 
	\[W_f(b,0) \in \{(-1)^{m}2^{rm},(-1)^{m+1}2^{r(m+1)}\}\] for any $b \in \F_{2^n}^*$.
\end{theorem}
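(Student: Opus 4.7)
The plan is to restrict each Walsh value $W_f(b,0)$ to a two-element set via a divisibility argument, and then to use the first two moments of $W_f(\cdot,0)$ together with a quadratic identity to eliminate all other cases and to count multiplicities.

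For the divisibility step: since $\omega(0)=1$ and every other $y\in\Image(f)$ has exactly $2^r+1$ preimages, the component $\Tr(bf(\cdot))$ is constant on each fibre of $f$ over a nonzero image. Hence its weight is divisible by $2^r+1$, equivalently $W_f(b,0)\equiv 1\pmod{2^r+1}$. In particular $W_f(b,0)\ne 0$, and the plateaued property forces $W_f(b,0)=\pm 2^{(n+t_b)/2}$. Since $2^r\equiv -1\pmod{2^r+1}$ has order $2r$, and any $\pm 2^{\rho}$ with $0\le\rho<r$ has absolute value less than $2^r+1$, the congruence $\equiv 1$ forces $r\mid (n+t_b)/2$ and determines the sign. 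Writing $t_b=2rj_b$ with $j_b\in\{0,1,\ldots,m\}$, one obtains $W_f(b,0)=(-1)^{m+j_b}2^{r(m+j_b)}$.

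Next I would compute the first two moments. Using $f(0)=0$ and $W_f(0,0)=2^n$ gives $\sum_{b\in\F_{2^n}^*}W_f(b,0)=0$. The preimage distribution yields $N(f)=1+(2^n-1)(2^r+1)$, so $\sum_{b\in\F_{2^n}^*}W_f(b,0)^2=2^n N(f)-2^{2n}=2^{n+r}(2^n-1)$.

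The heart of the argument is to introduce the quadratic
\[
P(z)=(z-(-1)^m 2^{rm})(z-(-1)^{m+1}2^{r(m+1)}),
\]
whose two roots are exactly the two target values. Expanding $P$ and substituting the two moments yields $\sum_{b\ne 0}P(W_f(b,0)) = 2^{n+r}(2^n-1)-0-2^{n+r}(2^n-1)=0$, where the constant term uses $n=2rm$ so that $rm+r(m+1)=n+r$. On the other hand $P(W_f(b,0))=0$ precisely when $j_b\in\{0,1\}$, while a short case analysis on the parities of $m$ and $j_b$ shows $P(W_f(b,0))>0$ whenever $j_b\ge 2$. The vanishing sum therefore forces $j_b\in\{0,1\}$ for every $b\ne 0$, i.e.\ each $f_b$ is either bent or has amplitude $2r$. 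The multiplicities $N_0,N_1$ are then uniquely pinned down by $N_0+N_1=2^n-1$ and $N_0+2^{2r}N_1=2^r(2^n-1)$, giving $N_0=2^r(2^n-1)/(2^r+1)$ and $N_1=(2^n-1)/(2^r+1)$. The explicit values of $W_f(b,0)$ then follow from the formula in the first step.

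The main technical obstacle is the sign and exponent analysis modulo $2^r+1$ in the first step, and the positivity verification of $P(W_f(b,0))$ when $j_b\ge 2$ in the last step; both reduce to careful case handling depending on the parity of $m$ and of $j_b$.
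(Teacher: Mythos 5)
Your proposal is correct and follows essentially the same route as the paper: the same divisibility argument modulo $2^r+1$ pinning $W_f(b,0)$ to the values $(-1)^{m+j}2^{r(m+j)}$, and the same two power moments of $W_f(\cdot,0)$. Your quadratic $P(z)$ is just a repackaging of the paper's final step, since $P\bigl((-1)^{m+k}2^{r(m+k)}\bigr)=2^n\bigl(2^{2rk}+(-1)^k(2^r-1)2^{rk}-2^r\bigr)$ is exactly (up to the factor $2^n$) the linear combination of the three counting equations that the paper forms to show $N_2=N_3=\dots=0$.
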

\begin{proof}
Let $b\in \F_{2^n}^*$ be arbitrary.
	Since $f$ is component-wise plateaued, $W_f(b,0)$  takes the values $0$ or $\pm 2^{rm+s}$ with $s\geq 0$. 
Note that since	$f$ is almost-$(2^r+1)$-to-$1$ with $f(0)=0$ and $\omega(0)=1$, the value 
$|\{x \in \F_{2^n}^* \colon \Tr(bf(x))=c\}|$ is divisible by $2^r+1$ for any $c \in \F_2$. Thus
\begin{eqnarray*}
	W_f(b,0) &=& |\{x \in \F_{2^n}^* \colon \Tr(bf(x))=0\}|-|\{x \in \F_{2^n}^* \colon \Tr(bf(x))=1\}|+1 \\
	 & \equiv & 1 \pmod {2^r+1}. 
	\end{eqnarray*}
		This shows in particular that $W_f(b,0) \neq 0$.
	Further, by Lemma~\ref{lem:gcds}, $2^{rm+s} \equiv 1 \pmod{2^r+1}$ if and only if $r|s$ and $m+(s/r)$ is even. Similarly, $-2^{rm+s} \equiv 1 \pmod{2^r+1}$ if and only if $r|s$ and $m+(s/r)$ is odd.
	Hence $W_f(b,0) =(-1)^{m+k}2^{r(m+k)}$ for a suitable $k\geq 0$.
	Define $$N_k=|\{b \in \F_{2^n}^* \colon |W_f(b,0)|=2^{r(m+k)}\}|$$ for an integer $k \geq 0$. 
	Since $f(x)=0$ holds only for $x=0$, we have 
	\[\sum_{b\in \F_{2^n}} W_f(b,0) = 2^n,\]
	which directly implies
		\[\sum_{b\in \F_{2^n}^*} W_f(b,0) = 0.\]
		Substituting  the possible values for $W_f(b,0)$ in the above equation, we get
		\begin{equation*}
			2^{rm}(N_0-2^{r}N_1+2^{2r}N_2-2^{3r}N_3+\dots )= 0,
		\end{equation*}
		implying
		\begin{equation} \label{eq:1}
			N_0-2^{r}N_1+2^{2r}N_2-2^{3r}N_3+\dots= 0.
		\end{equation}
		Now since $f$ is almost-$(2^r+1)$-to-$1$, for every fixed non-zero $x$ there are exactly $(2^r+1)$ elements $a \in \F_{2^n}$ satisfying
		$f(x)+f(x+a)=0$, and for $x=0$ only  $a=0$ solves it. Thus we get
		\begin{align*}
			\sum_{b \in \F_{2^n}} (W_f(b,0))^2 &=  \sum_{b \in \F_{2^n}} \sum_{x \in \F_{2^n}}\sum_{a \in \F_{2^n}} (-1)^{\Tr(b(f(x)+f(x+a)))} \\
			&=2^n((2^r+1) \cdot (2^n-1)+1)=2^{2n+r}+2^{2n}-2^{n+r}.
		\end{align*}
		In particular, 
		\begin{equation*}
		\sum_{b \in \F_{2^n}^*} (W_f(b,0))^2 =  2^{2n+r}-2^{n+r}.
		\end{equation*}
		Again, substituting  the possible values for $W_f(b,0)$, we get
		\begin{equation*}
			2^n(N_0+2^{2r}N_1+2^{4r}N_2+2^{6r}N_3+ \dots )= 2^{2n+r}-2^{n+r},
		\end{equation*}
		which immediately leads to
		\begin{equation} \label{eq:2}
			N_0+2^{2r}N_1+2^{4r}N_2+2^{6r}N_3+ \dots = 2^{n+r}-2^r.
		\end{equation}
		Clearly, we also have
		\begin{equation}\label{eq:3}
			N_0+N_1+N_2+\dots = 2^n-1.
		\end{equation}
		Adding Eq.~\eqref{eq:1} $(2^r-1)$-times to Eq.~\eqref{eq:2}, we get
		\begin{equation} \label{eq:4}
			2^rN_0+2^rN_1+(2^{2r}(2^r-1)+2^{4r})N_2+(2^{6r}-(2^r-1)2^{3r})N_3+\dots = 2^{n+r}-2^r.
		\end{equation}
		Observe that all coefficients in Eq.~\eqref{eq:4} are positive.
		Now, subtracting Eq.~\eqref{eq:3} $2^r$-times from Eq.~\eqref{eq:4} yields
		\begin{equation*}
			(2^{2r}(2^r-1)+2^{4r}-2^r)N_2+(2^{6r}-(2^r-1)2^{3r}-2^r)N_3+\dots = 0.
		\end{equation*}
		Here, all coefficients are again positive, so we conclude $N_2=N_3=\dots = 0$. From Eq.~\eqref{eq:1} and Eq.~\eqref{eq:3} we then immediately deduce that $N_0 =(2^r/(2^r+1))(2^n-1)$ and $N_1 = (2^n-1)/(2^r+1)$.
			\qed
\end{proof} }

Note that the conditions $f(0)=0$ and $\omega(0)=1$ are not restrictive when we consider  the extended Walsh spectrum: Indeed, otherwise we consider $f(x+c)+d$ with suitable  $c,d \in \F_{2^n}$, which is also component-wise plateaued and has  the same extended Walsh spectrum as $f$:
		\begin{align*}
	W_{f(x+c)+d}(b,a) &= \sum_{x \in \F_{2^n}}(-1)^{\Tr(b(f(x+c)+d)+ax)} \\
	&= (-1)^{\Tr(bd)}\sum_{x \in \F_{2^n}}(-1)^{\Tr(bf(x)+a(x+c))} \\
	& = (-1)^{\Tr(bd+ac)}W_{f}(b,a).
	\end{align*} \\

\noindent
{	
The two boundary cases $m=1$ and $r=1$ of Theorem~\ref{thm:k-to-1} imply
interesting extremal cases. For $m=1$, we get that a component-wise plateaued almost-$(2^{n/2}+1)$-to-$1$ map on $\F_{2^n}$ has $2^n-2^{n/2}$ bent components, which is the maximum number of bent components that a map on $\F_{2^n}$ can have~\cite{maxbent}. For $m=1$ the following result holds too:
\begin{proposition}
Let $r\in \mathbb{N}$, $n=2r$ and  $f:\F_{2^n}\rightarrow \F_{2^n}$ be an almost-$(2^r+1)$-to-$1$ map.
	 Then $f$ is component-wise plateaued if and only if it has $2^n-2^{n/2}$ bent components.
\end{proposition}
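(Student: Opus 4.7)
The plan is to prove both directions separately, after first reducing to the case $f(0)=0$ and $\omega(0)=1$; this is justified because the extended Walsh spectrum (hence the number of bent components) and the property of being component-wise plateaued are both invariant under the shift $f(x) \mapsto f(x+c)+d$, as explained in the paragraph following Theorem~\ref{thm:k-to-1}.

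For the forward implication, I would simply apply Theorem~\ref{thm:k-to-1} with $m=1$. Substituting $m=1$ into the formula $(2^r/(2^r+1))(2^n-1)$ and using $n=2r$ gives $2^r(2^r-1)=2^n-2^{n/2}$ bent components, as required.

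For the converse, suppose $f$ has $2^n-2^{n/2}$ bent components. Let $B$ denote this set and $NB=\F_{2^n}^*\setminus B$, so that $|NB|=2^r-1$. The plan is to extract everything from the single Parseval-type identity at $a=0$. The almost-$(2^r+1)$-to-$1$ hypothesis gives
\[
	N(f) \;=\; 1+(2^r-1)(2^r+1)^2 \;=\; 2^{3r}+2^{2r}-2^r,
\]
and expanding $W_f(\lambda,0)^2$ and swapping the order of summation yields
\[
	\sum_{\lambda\in\F_{2^n}}W_f(\lambda,0)^2 \;=\; 2^n\cdot N(f).
\]
Subtracting the contribution of $\lambda=0$ (which is $2^{2n}$) and of each bent component (each contributing $2^n$) leaves
\[
	\sum_{\lambda\in NB}W_f(\lambda,0)^2 \;=\; 2^{2n}(2^r-1) \;=\; |NB|\cdot 2^{2n}.
\]

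The decisive observation is that $|W_f(\lambda,0)|\leq 2^n$ for every $\lambda$, with equality exactly when the Boolean function $f_\lambda$ is constant on $\F_{2^n}$. Since $|NB|$ summands, each bounded above by $2^{2n}$, sum to precisely $|NB|\cdot 2^{2n}$, every summand must saturate the bound. Hence $f_\lambda$ is constant for each $\lambda\in NB$, and because $f(0)=0$ this constant equals $0$, i.e.\ $f_\lambda\equiv 0$. The zero function is trivially plateaued (its Walsh transform is supported only at $a=0$, with value $2^n$), while bent components are plateaued by definition; therefore $f$ is component-wise plateaued. I expect the main obstacle to be spotting that the cardinality $|NB|=2^r-1$ is exactly calibrated so that the Parseval-type sum is forced to be tight; once one commits to computing the second moment at $a=0$, the remainder of the argument is essentially bookkeeping.
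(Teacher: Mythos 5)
Your proof is correct. The forward direction is exactly the paper's: invoke Theorem~\ref{thm:k-to-1} with $m=1$ after the standard normalization. For the converse you take a genuinely different route. The paper re-enters the proof of Theorem~\ref{thm:k-to-1} to extract the congruence $W_f(b,0)\equiv 1 \pmod{2^{n/2}+1}$, which pins the Walsh value of every bent component at $a=0$ to be exactly $-2^{n/2}$ (sign included); it then applies the \emph{first}-moment identity $\sum_{b\neq 0}W_f(b,0)=0$ (using $\omega(0)=1$) to force $W_f(b,0)=2^n$ for the remaining $2^{n/2}-1$ components. You instead use the \emph{second}-moment identity $\sum_b W_f(b,0)^2=2^nN(f)$ together with the exact value $N(f)=2^{3r}+2^{2r}-2^r$ coming from the preimage distribution, so that after removing the $b=0$ term and the bent contributions the leftover sum saturates the trivial bound $W_f(b,0)^2\le 2^{2n}$ term by term. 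This buys you two things: you avoid any sign analysis (squares make the congruence argument unnecessary), and your converse is self-contained rather than depending on an intermediate step inside another proof; moreover, since $N(f)$ and the second moment are insensitive to which image point carries the single preimage, your converse does not actually need the normalization $\omega(0)=1$ at all. Both arguments land in the same place --- the $2^{n/2}-1$ non-bent components are constant, hence (by Parseval in $a$) have Walsh support only at $a=0$ and are plateaued --- so the two proofs are of comparable length, with yours arguably the cleaner bookkeeping.
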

\begin{proof}
	One direction is covered by Theorem~\ref{thm:k-to-1}. Assume that $f$ has $2^n-2^{n/2}$ bent components. As mentioned before, we can assume without loss of generality that $f(0)=0$ and $\omega(0)=1$. By the proof of Theorem~\ref{thm:k-to-1} we have $W_f(b,0) \equiv 1 \pmod{2^{n/2}+1}$ for all $b \in \F_{2^n}^*$. Hence if $b$ defines
	a bent component, then $W_f(b,0)=-2^{n/2}$. Thus
	\begin{align*}
		0&=\sum_{b\in \F_{2^n}^*} W_f(b,0) \\
		 &=-(2^n-2^{n/2})2^{n/2}+\sum_{b \text{ not bent}} W_f(b,0),
	\end{align*}
	implying $\sum_{b \text{ not bent}} W_f(b,0)=2^{n}(2^{n/2}-1)$. The sum has $2^{n/2}-1$ terms and each term is less or equal to $2^n$, so necessarily it must hold $W_f(b,0)=2^n$ for every $b \in\F_{2^n}^*$ that does not  define a bent component. Then, by Parseval's equation, $W_f(b,a)=0$ for these $b$ and every $a \in \F_{2^n}^*$, so these components are also plateaued.
	\qed
\end{proof}}

The  case $r=1$ of Theorem~\ref{thm:k-to-1} shows that almost-3-to-1 component-wise plateaued  maps are APN and they have the classical Walsh spectrum:
\begin{theorem}\label{thm:walsh_spectrum_three_to_one}
	Let $n=2m$ and  $f:\F_{2^n}\to \F_{2^n}$ be an almost-$3$-to-$1$ component-wise plateaued map. 
	 Then $f$ is an APN map with the classical Walsh spectrum.
	Moreover, if $f(0)=0$ and $\omega(0)=1$, i.e. $0$ is the only element with precisely one preimage, then \[W_f(b,0) \in \{(-1)^{m}2^m,(-1)^{m+1}2^{m+1}\}\] for any $b \in \F_{2^n}^*$.
\end{theorem}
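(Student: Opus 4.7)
The plan is to obtain Theorem~\ref{thm:walsh_spectrum_three_to_one} as the specialization $r=1$ of Theorem~\ref{thm:k-to-1}, combined with Proposition~\ref{prop:bentcomponents} and a short Parseval counting argument. The main work has essentially already been done in the proof of Theorem~\ref{thm:k-to-1}; what remains is to unpack what the $r=1$ statement says and to verify the multiplicities in the definition of the classical Walsh spectrum.

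First I would handle the normalization. The claims on the extended Walsh spectrum, the APN property, and the component-wise plateaued property are all preserved when $f(x)$ is replaced by $f(x+c)+d$ with suitable $c, d \in \F_{2^n}$, as observed in the remark following the proof of Theorem~\ref{thm:k-to-1}. Since $f$ is almost-$3$-to-$1$, there is a unique image with one preimage, and by choosing $c, d$ so that this preimage is $0$ and its image is $0$, I may assume $f(0)=0$ and $\omega(0)=1$. With $r=1$ and $m=n/2$, Theorem~\ref{thm:k-to-1} then applies verbatim, yielding $W_f(b,0) \in \{(-1)^m 2^m, (-1)^{m+1}2^{m+1}\}$ for every $b\in\F_{2^n}^*$, and producing exactly $(2/3)(2^n-1)$ bent components and $(1/3)(2^n-1)$ components of amplitude $t=2$.

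The APN property then follows at once from Proposition~\ref{prop:bentcomponents}, whose hypotheses are precisely the component-distribution just obtained.

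To finish, I would verify the multiplicities in the classical Walsh spectrum by a direct counting. Each bent component $f_\lambda$ satisfies $|W_f(\lambda,a)|=2^{n/2}$ for all $a\in\F_{2^n}$, so the $(2/3)(2^n-1)$ bent components together contribute $(2/3)(2^n-1)\cdot 2^n$ occurrences of $2^{n/2}$. Each component $f_\lambda$ of amplitude $t=2$ takes $|W_f(\lambda,a)|\in\{0,2^{(n+2)/2}\}$, and Parseval's identity
\[
\sum_{a\in\F_{2^n}} W_f(\lambda,a)^2 = 2^{2n}
\]
forces exactly $2^{n-2}$ values equal to $2^{(n+2)/2}$ and $3\cdot 2^{n-2}$ zeros per such component; multiplying by the $(1/3)(2^n-1)$ components of amplitude $2$ gives $(1/3)(2^n-1)\cdot 2^{n-2}$ occurrences of $2^{(n+2)/2}$ and $(2^n-1)\cdot 2^{n-2}$ zeros. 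These are exactly the multiplicities in the definition of the classical Walsh spectrum, completing the proof. I do not expect any serious obstacle, since all the nontrivial arithmetic was already handled inside the proof of Theorem~\ref{thm:k-to-1}.
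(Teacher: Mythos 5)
Your proposal is correct and follows essentially the same route as the paper: the authors also deduce the theorem directly from Theorem~\ref{thm:k-to-1} with $r=1$ together with Proposition~\ref{prop:bentcomponents}, relying on the Parseval-equation remark preceding Theorem~\ref{thm:k-to-1} for the classical-spectrum multiplicities. Your explicit count of $2^{n-2}$ nonzero values per amplitude-$2$ component and the normalization via $f(x+c)+d$ merely spell out details the paper leaves implicit.
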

\begin{proof}
	The result follows from Theorem~\ref{thm:k-to-1} for $r=1$ and Proposition~\ref{prop:bentcomponents}.
			\qed
\end{proof}

\noindent
{In \cite[Corollary 10 and 11]{carletplateaued} it is proven that if $n$ is even, then
 all almost-3-to-1 plateaued maps of $\F_{2^n}$ have the same Walsh spectrum as the cube function $x\mapsto x^3$. This  is exactly the statement of Theorem \ref{thm:walsh_spectrum_three_to_one} too.
 The precise Walsh spectrum of the cube function is determined by Carlitz~\cite{carlitzgold} via
 a refined evaluation of certain Gauss sums.
The proof of Theorem \ref{thm:k-to-1} implies 
an elementary proof for  Carlitz's result on the value of the cubic exponential sum $S(b)=\sum_{x \in \mathbb{F}_{2^n}} (-1)^{\Tr(bx^3)}$. The latter is the key step for obtaining the  
Walsh spectrum of the cube function in  \cite{carlitzgold}.} \\

\noindent
It is well known that quadratic (not necessarily APN) maps as well as crooked maps are component-wise plateaued.
 Hence  Theorem~\ref{thm:walsh_spectrum_three_to_one} implies 
\begin{corollary}\label{cor:crooked-walsh}
Let $n=2m$ and  $f:\F_{2^n}\to \F_{2^n}$.
\begin{itemize}
\item[(a)] If $f$ is  almost-$3$-to-$1$ crooked map,
 then $f$ has  the classical Walsh spectrum. 
\item[(b)] If $f$ is almost-3-to-1 quadratic map, then it is APN with  the classical Walsh
spectrum.
\end{itemize}
\end{corollary}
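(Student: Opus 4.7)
The plan is to derive both parts of the corollary as direct consequences of Theorem~\ref{thm:walsh_spectrum_three_to_one}. That theorem already delivers the full statement --- APN property together with the classical Walsh spectrum --- for any almost-3-to-1 component-wise plateaued map on $\F_{2^{2m}}$. What remains is simply to observe that both crooked maps and quadratic maps fall into this class, which is the content of the paragraph immediately preceding the corollary and is well documented in the literature cited there.

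For part (a), I would recall that a crooked map has every differential set $D_a(f)$ equal to an affine hyperplane of $\F_{2^n}$. As shown in \cite{bending-crooked,kyureg-crooked}, this hyperplane structure forces each component $f_\lambda = \Tr(\lambda f(\cdot))$ to be plateaued, so $f$ is component-wise plateaued. Combined with the almost-3-to-1 hypothesis, Theorem~\ref{thm:walsh_spectrum_three_to_one} yields the classical Walsh spectrum at once. The APN property is in any case automatic for crooked maps, since hyperplane differential sets always meet each point in $0$ or $2$ preimages of the difference equation.

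For part (b), I would invoke the classical fact that every quadratic map on $\F_{2^n}$ is component-wise plateaued: each component $f_\lambda$ is a quadratic Boolean function, and Dickson's normal form of quadratic forms over $\F_2$ forces $|W_f(\lambda,a)|$ to take only a single nonzero value as $a$ ranges over $\F_{2^n}$. Another application of Theorem~\ref{thm:walsh_spectrum_three_to_one} then gives both the APN property and the classical Walsh spectrum.

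There is essentially no obstacle in this argument: the corollary is a packaging of the main theorem of the section together with two well-known structural facts. The only point at which one might expect to do some work, namely establishing plateauedness in the crooked case from first principles, is already handled in the cited references, and invoking it here is entirely legitimate. A similar remark applies to quadratic maps, where the plateaued property is part of the standard folklore on Boolean quadratic forms.
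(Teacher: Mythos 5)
Your proposal is correct and follows exactly the paper's route: the paper likewise derives both parts by noting that crooked maps and quadratic maps are component-wise plateaued (citing the same standard facts) and then applying Theorem~\ref{thm:walsh_spectrum_three_to_one}. The extra detail you supply on \emph{why} these classes are plateaued is consistent with the cited references and does not change the argument.
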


Note that Corollaries~\ref{th_iff} and \ref{cor:crooked-walsh} confirm  Conjecture 1 stated in
\cite{villa2019apn},  that all APN maps of the form $f(x)=L_1(x^3)+L_2(x^9)$ in even dimension have the classical Walsh spectrum. Theorem~\ref{thm:walsh_spectrum_three_to_one} combined with Theorems~\ref{thm:zhoupott31} and \ref{apn-faruk}  yield that Zhou-Pott and G\"olo\u{g}lu APN maps have the classical Walsh spectrum. This is shown for Zhou-Pott and further  maps in \cite{anbar-walsh}, using Bezout's theorem on intersection points of two projective plane curves. Not all maps considered in \cite{anbar-walsh} are almost-3-to-1.\\

\noindent
By Corollary \ref{cor:crooked-walsh} the EA-class of a quadratic APN map with non-classical Walsh spectrum do
not contain an almost-3-to-1 map. The following related question is yet open:\\

\noindent
{{\bf Open Problem:} Let $n$ be even. Is there any APN DO map $f:\F_{2^n}\to \F_{2^n}$ with the classical Walsh spectrum, such that for any $\F_2$-linear map $f+l$ is not almost-$3$-to-$1$ (equivalently, that there is no
almost-$3$-to-$1$ map in the EA-class of $f$)\,?} \\

\noindent
Almost-$3$-to-$1$ APN maps with non-classical Walsh spectra exist; an example is the Dobbertin map $x \mapsto x^d$  on $\F_{2^n}$ where $10|n$, $n=5g$ and $d=2^{4g}+2^{3g}+2^{2g}+2^{g}-1$~\cite{ccddobbertin}.\\

\noindent
We conclude this section with some observations on the almost bent maps on $\F_ {2^n}$ with $n$ odd.
We use them in the next section to give an upper bound on the image set of such maps.
Next lemma describes  a direct connection between $N(f)$ and the number $N_0$ of balanced component functions
of almost bent maps.
\begin{lemma} \label{lem:AB}
	Let $n$ be odd and $f:\F_{2^n} \to \F_{2^n}$ be  almost bent. 
	Set 
	\begin{align*}
	N_0&=|\{b \in \F_{2^n}^* \colon W_f(b,0)=0\}|\\
	N_+ &= |\{b \in \F_{2^n}^* \colon W_f(b,0)=2^{(n+1)/2}\}|\\
	N_- &= |\{b \in \F_{2^n}^* \colon W_f(b,0)=-2^{(n+1)/2}\}|.
	\end{align*}
	Then these  three values are determined by $N(f)$ in the following way:
	\begin{align*}
	N_0&=2^n-1+2^{n-1}-N(f)/2\\
	N_+ &= N(f)/4 -2^{n-2}+2^{(n-3)/2}(\omega (0)-1)\\
	N_- &= N(f)/4 -2^{n-2}-2^{(n-3)/2}(\omega (0)-1).
	\end{align*}
\end{lemma}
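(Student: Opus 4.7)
The plan is to exploit the fact that for an almost bent map on $\F_{2^n}$ with $n$ odd the value $W_f(b,0)$ for $b\ne 0$ can only be $0$ or $\pm 2^{(n+1)/2}$. Hence the three quantities $N_0,N_+,N_-$ exhaust all $b\in\F_{2^n}^*$, so it suffices to produce three linear equations in these three unknowns and invert the resulting (Vandermonde-type) system. The three equations will be the total count, the first moment, and the second moment of $W_f(b,0)$ over $b\in\F_{2^n}^*$.

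The first identity is trivial: $N_0+N_++N_-=2^n-1$. For the first moment, I would use orthogonality of additive characters,
\[
\sum_{b\in\F_{2^n}}W_f(b,0)=\sum_{x\in\F_{2^n}}\sum_{b\in\F_{2^n}}(-1)^{\Tr(bf(x))}=2^n\,|\{x:f(x)=0\}|=2^n\omega(0),
\]
so that after removing the $b=0$ term $W_f(0,0)=2^n$ one obtains
\[
\sum_{b\in\F_{2^n}^*}W_f(b,0)=2^n(\omega(0)-1),
\]
and since the left side equals $2^{(n+1)/2}(N_+-N_-)$, this gives $N_+-N_-=2^{(n-1)/2}(\omega(0)-1)$. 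For the second moment, a standard double count yields
\[
\sum_{b\in\F_{2^n}}W_f(b,0)^2=\sum_{x,y}\sum_{b}(-1)^{\Tr(b(f(x)+f(y)))}=2^n\,N(f),
\]
and subtracting the $b=0$ contribution $2^{2n}$ gives
\[
2^{n+1}(N_++N_-)=2^nN(f)-2^{2n},
\]
i.e.\ $N_++N_-=N(f)/2-2^{n-1}$.

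Solving the three linear equations is then purely mechanical: from $N_0=2^n-1-(N_++N_-)$ one immediately reads off $N_0=2^n-1+2^{n-1}-N(f)/2$, and combining the sum and the difference of $N_\pm$ yields
\[
N_\pm=\tfrac{N(f)}{4}-2^{n-2}\pm 2^{(n-3)/2}(\omega(0)-1),
\]
as claimed. There is no real obstacle here: the only place that uses the almost bent hypothesis is the reduction of $W_f(b,0)$ to three possible values, and everything else is just character orthogonality and the definition of $N(f)$ and $\omega(0)$.
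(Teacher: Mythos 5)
Your proof is correct and follows essentially the same route as the paper: the count $N_0+N_++N_-=2^n-1$, the first and second moments of $W_f(b,0)$ over $b\in\F_{2^n}^*$ computed by character orthogonality, and then solving the resulting linear system. No gaps.
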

\begin{proof}
	Clearly, we have
	\begin{equation} \label{eq:ab_1}
		N_0+N_++N_- = 2^n-1.
	\end{equation}
	Further, we have 
		\begin{align*}
			\sum_{b \in \F_{2^n}} (W_f(b,0))^2 &=  \sum_{b \in \F_{2^n}} \sum_{x \in \F_{2^n}}\sum_{y \in \F_{2^n}} (-1)^{\Tr(b(f(x)+f(y)))} \\
			&=2^n N(f),
		\end{align*}
	which implies
			\begin{equation*}
		\sum_{b \in \F_{2^n}^*} (W_f(b,0))^2 =  2^n N(f) - 2^{2n}.
		\end{equation*}
	Rewriting this equation yields
	\begin{equation*}
		2^{n+1}(N_++N_-) = 2^n N(f) - 2^{2n}
	\end{equation*}
	or, equivalently, 
		\begin{equation}
		N_++N_-=  N(f)/2 - 2^{n-1}.
	\label{eq:ab_2}
	\end{equation}
	Moreover, we have
		\[\sum_{b\in \F_{2^n}} W_f(b,0) = 2^n\cdot \omega (0),\]
	which directly implies
		\[\sum_{b\in \F_{2^n}^*} W_f(b,0) = 2^n\cdot (\omega (0)-1),\]
		which yields
	\begin{equation*}
		2^{(n+1)/2}(N_+-N_-) = 2^n\cdot (\omega (0)-1),
	\end{equation*}	
	and
	\begin{equation}\label{eq:ab_3}
		N_+-N_- = 2^{(n-1)/2}\cdot (\omega (0)-1).
	\end{equation}	
	Subtracting Eq.~\eqref{eq:ab_2} from Eq.~\eqref{eq:ab_1} yields
	\begin{equation*}
		N_0 = 2^n+2^{n-1}-N(f)/2-1.
	\end{equation*}
	Similarly adding Eq.~\eqref{eq:ab_2} and Eq.~\eqref{eq:ab_3} we get that $N(f)$ must be divisible by $4$ and that
	\begin{equation*}
		N_+ = N(f)/4-2^{n-2}+2^{(n-3)/2}(\omega (0)-1).
	\end{equation*}
	The value of $N_-$ then follows immediately from Eq.~\eqref{eq:ab_1}. 
		\qed
\end{proof}

Lemma~\ref{lem:AB} directly implies

\begin{corollary}\label{cor:AB}
Let $n$ be odd and  $f:\F_{2^n} \to \F_{2^n}$ be almost bent. Then
\begin{itemize}
\item[(a)]
$N(f)$ is  divisible by $4$.
\item[(b)] The number of balanced component functions of $f$ is odd.
In particular, every almost bent function has at least one balanced component function.
\item[(c)]	$N(f) \leq 3\cdot 2^n-4$ and $f$ is not  zero-difference 2-balanced.
\item[(d)] {\[|\Image(f)| > \frac{2^n+1}{3}.\]}
	\end{itemize}
\end{corollary}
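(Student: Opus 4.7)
The plan is to read off each of the four statements directly from Lemma~\ref{lem:AB} and combine with Corollary~\ref{cor:Nf} and Lemma~\ref{Lem_Cauchy_Schwarz}. Essentially all the work has already been done; the corollary is just a bookkeeping exercise on the formulas for $N_0,N_+,N_-$.

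For (a), I would observe that $N_\pm$ are cardinalities of finite sets, hence nonnegative integers, while $2^{n-2}$ and $2^{(n-3)/2}(\omega(0)-1)$ are integers for odd $n\geq 3$. The formulas $N_\pm = N(f)/4 \mp 2^{n-2} \pm 2^{(n-3)/2}(\omega(0)-1)$ therefore force $N(f)/4\in\mathbb{Z}$. For $n=1$ the claim is trivial since $\F_2\to\F_2$ admits no almost bent map in the relevant sense, or one checks directly. For (b), the formula $N_0 = 2^n-1+2^{n-1}-N(f)/2$ combined with (a) implies $N_0$ is odd: $2^n-1$ is odd, $2^{n-1}$ is even for $n\geq 2$ odd, and $N(f)/2$ is even since $4\mid N(f)$. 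In particular $N_0\geq 1$, so at least one component function is balanced.

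For (c), $N_0\geq 1$ applied to its formula gives $N(f)\leq 2\bigl(2^n-1+2^{n-1}-1\bigr)=3\cdot 2^n-4$. By Corollary~\ref{cor:Nf} applied with $d=2$, the bound $N(f)\leq 3\cdot 2^n-2$ is attained precisely by zero-difference 2-balanced maps, and since our strict bound $3\cdot 2^n-4$ is below this threshold, $f$ cannot be zero-difference 2-balanced. For (d), Lemma~\ref{Lem_Cauchy_Schwarz} yields
\[
|\Image(f)| \geq \frac{2^{2n}}{N(f)} \geq \frac{2^{2n}}{3\cdot 2^n-4}.
\]
Clearing denominators, the inequality $(2^n+1)(3\cdot 2^n-4) < 3\cdot 2^{2n}$ reduces to $2^n+4>0$, which is trivially true. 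Hence $2^{2n}/(3\cdot 2^n-4) > (2^n+1)/3$, and $|\Image(f)|>(2^n+1)/3$ follows.

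No step is a genuine obstacle; the only care required is the parity bookkeeping in (a) and (b), and noting in (c) that the improvement from $3\cdot 2^n-2$ to $3\cdot 2^n-4$ is exactly what is needed to exclude the zero-difference 2-balanced case and to push the image-size bound strictly above the generic lower bound of Theorem~\ref{thm:APN_Lower_Bound}.
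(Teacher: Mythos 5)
Your proof is correct, and for parts (a) and (b) it is exactly the paper's argument: integrality of $N_\pm$ forces $4\mid N(f)$, and then the formula for $N_0$ forces $N_0$ odd, hence $N_0\geq 1$. The only (minor) divergences are in how you close out (c) and (d). For (c) the paper combines $4\mid N(f)$ with the generic bound $N(f)\leq 3\cdot 2^n-2$ of Corollary~\ref{cor:Nf} (the largest multiple of $4$ below that threshold is $3\cdot 2^n-4$), whereas you extract the same bound from $N_0\geq 1$ in the formula for $N_0$; both are one-line computations and both are valid. For (d) the paper argues by contradiction via the preimage-distribution characterization in Theorem~\ref{thm:APN_Lower_Bound}: if $|\Image(f)|=(2^n+1)/3$ then one element has $2$ preimages and the rest have $3$, forcing $N(f)=3\cdot 2^n-2$, which contradicts (c). You instead feed the improved bound $N(f)\leq 3\cdot 2^n-4$ directly into the Cauchy--Schwarz inequality of Lemma~\ref{Lem_Cauchy_Schwarz} and verify $2^{2n}/(3\cdot 2^n-4)>(2^n+1)/3$ by clearing denominators; this is equally rigorous and arguably more self-contained, since it does not rely on the equality analysis of Theorem~\ref{thm:APN_Lower_Bound}. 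One small caveat: your parity argument in (a) needs $2^{(n-3)/2}$ and $2^{n-2}$ to be integers, i.e.\ $n\geq 3$; you flag $n=1$ as a degenerate case to be dispatched separately, which is the right instinct (the paper silently assumes $n\geq 3$ as well), so nothing is missing in substance.
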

\begin{proof}
Statement (a) holds since $N_+$ and $N_-$ in Lemma~\ref{lem:AB} are integers. Then (b) is a direct consequence of (a) and
 Lemma~\ref{lem:AB}. Using Corollary~\ref{cor:Nf} and (a), we get that 
$N(f) \leq 3\cdot 2^n-4$ and hence $f$ is not  zero-difference 2-balanced.
{Theorem \ref{thm:APN_Lower_Bound} with (c) imply (d), since if the lower bound is fulfilled then necessarily $N(f)=3\cdot 2^n-2$. }
\qed
\end{proof}

\begin{remark}
Recall that any crooked map is almost bent if $n$ is odd. Property (c) in Corollary~\ref{cor:AB} implies that at least one differential set of a crooked map on $\F_{2^n}$ with $n$ odd
is a complement of a hyperplane. Equivalently, for $n$ odd there is no crooked
map such that all its  difference sets are hyperplanes.
To the contrary if $n$ is odd then there are bijective crooked maps, for which necessarily all
differential sets are complements of hyperplanes. Interestingly, this property is the other way around if $n$ is even.
Then  crooked maps, for which  all  differential sets are hyperplanes, do exist (for instance, $x\mapsto x^3$ as observed in \cite{kyureg-crooked}). But there are no crooked
maps with all their differential sets being complements of hyperplanes. The latter is a consequence of the 
non-existence of bijective crooked maps in even dimension \cite{kyureg-crooked}. 
\end{remark}

\section{Upper bounds on the image sets of APN maps}

In previous sections  we  used the value $N(f)$ to obtain a lower bound for the image size of some special maps. In~\cite{coultersenger}, information on $N(f)$ was used to prove an \emph{upper} bound on the image size of maps, significantly for  planar maps. 

\begin{theorem}[{\cite[Theorem 2]{coultersenger}}] \label{thm:upperbound}
	Let $f\colon \F_{2^n} \rightarrow \F_{2^n}$. Then
	\[|\Image(f)| \leq 2^n-\frac{2N(f)-2^{n+1}}{1+\sqrt{4N(f)-2^{n+2}+1}}=2^n-\frac{1}{2}(\sqrt{4N(f)-2^{n+2}+1}-1).  \]
\end{theorem}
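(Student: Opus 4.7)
The plan is to reformulate the desired inequality using the basic identities \eqref{eq_Mr}--\eqref{eq_r2Mr} in terms of the ``excess'' $q-|\Image(f)|$, where I write $q=2^n$ for brevity, and then to reduce to the elementary fact that for non-negative reals $a_i$ one has $\sum a_i^2 \leq \bigl(\sum a_i\bigr)^2$.

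Setting $t=q-|\Image(f)|\geq 0$, I would first show that the asserted bound is equivalent to the polynomial inequality
\[
	t(t+1) \;\geq\; N(f)-q,
\]
by solving the quadratic $t^2+t-(N(f)-q)\geq 0$ via the usual formula (noting $t\geq 0$). So the whole task reduces to establishing this quadratic inequality.

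To prove it, I would introduce the non-negative integers $n(y):=\omega(y)-1$ for $y\in\Image(f)$. The identities \eqref{eq_Mr}--\eqref{eq_r2Mr} translate to
\begin{align*}
	\sum_{y\in\Image(f)} n(y) &= q-|\Image(f)| = t,\\
	\sum_{y\in\Image(f)} n(y)^2 &= N(f)-2q+|\Image(f)| = N(f)-q-t,
\end{align*}
where the second identity comes from expanding $(\omega(y)-1)^2=\omega(y)^2-2\omega(y)+1$ and summing over $y\in\Image(f)$.

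Now the elementary inequality $\sum n(y)^2 \leq \bigl(\sum n(y)\bigr)^2$, valid for any collection of non-negative numbers (it is just $\bigl(\sum n(y)\bigr)^2 = \sum n(y)^2 + 2\sum_{y<y'}n(y)n(y')\geq \sum n(y)^2$), immediately yields $N(f)-q-t \leq t^2$, i.e.\ $t(t+1)\geq N(f)-q$, which is exactly what we needed. There is no real obstacle here; the only thing to check carefully is the algebraic equivalence between the stated upper bound and the quadratic condition on $t$, and the fact that the discriminant $4N(f)-2^{n+2}+1$ is non-negative, which follows from $N(f)\geq q$ (a trivial consequence of \eqref{eq_rMr} and \eqref{eq_r2Mr} via Cauchy--Schwarz, already used in Lemma~\ref{Lem_Cauchy_Schwarz}). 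One may also note that equality throughout forces all the cross terms $n(y)n(y')$ to vanish, meaning at most one element of $\Image(f)$ has more than one preimage; this recovers the extremal characterisation.
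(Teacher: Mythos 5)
Your argument is correct and complete. Note, however, that the paper does not actually prove this statement: it is imported verbatim as Theorem 2 of \cite{coultersenger}, and the only thing verified in the surrounding text is the algebraic identity between the fractional and the square-root forms of the bound (by rationalizing the denominator). So there is no in-paper proof to compare against; what you have produced is a self-contained elementary proof of the cited result. Your normalization is the right one: with $t=2^n-|\Image(f)|$ the bound is equivalent to $t(t+1)\geq N(f)-2^n$, since the quadratic $t^2+t-(N(f)-2^n)$ has its unique nonnegative root at $\tfrac12(\sqrt{4N(f)-2^{n+2}+1}-1)$ and $N(f)\geq 2^n$ guarantees the radicand is at least $1$. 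The key inequality $\sum_y n(y)^2\leq\bigl(\sum_y n(y)\bigr)^2$ for the nonnegative excesses $n(y)=\omega(y)-1$, combined with the two moment identities you derive from \eqref{eq_Mr}--\eqref{eq_r2Mr}, delivers exactly this; it is in the same spirit as the second-moment computations the paper itself performs elsewhere (e.g.\ in Theorem~\ref{thm:Image_Size2}). Your closing remark on the equality case is also right: equality forces every cross term $n(y)n(y')$ to vanish, so at most one image point has more than one preimage, and conversely any such map attains the bound.
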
 \qed

\medskip

\noindent
The equality 
\[\frac{2N(f)-2^{n+1}}{1+\sqrt{4N(f)-2^{n+2}+1}} = \frac{1}{2}(\sqrt{4N(f)-2^{n+2}+1}-1)\]
is not mentioned in~\cite{coultersenger}, but it can be verified easily by expanding the fraction with $1-\sqrt{4N(f)-2^{n+2}+1}$.\\

If $n$ is even, Theorem \ref{thm:upperbound} implies in particular an upper bound on the image size of a map  depending on the number of  its bent components as observed in Theorem \ref{thm:bentupperbound}. For an  odd $n$
Lemma~\ref{lem:AB}  yields an upper bound for the image size of almost bent maps. Since almost bent maps, contrary to the planar ones, can  be permutations, the bound is more involved.

\begin{theorem} \label{thm:ABupperbound}
	Let $f \colon \F_{2^n} \rightarrow \F_{2^n}$ be an almost bent map. Set $k = \max\{\omega(a) | a\in\F_{2^n}\}$, i.e. there exists an element $c \in \F_{2^n}$ with $k$ preimages under $f$ and there is no element with more than $k$ preimages. 
	Then
	\[|\Image(f)|  \leq 2^n - \frac{k-1}{k}2^{(n+1)/2}.\]
	In particular, if $f$ is not a permutation, then
	\begin{equation}
			|\Image(f)|  \leq 2^n - 2^{(n-1)/2}.
	\label{eq:ABupperbound}
	\end{equation}

\end{theorem}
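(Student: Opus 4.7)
The plan is to combine two bounds on $N(f)$: a lower bound coming from the almost bent structure via Lemma~\ref{lem:AB}, and an upper bound in terms of $J := 2^n - |\Image(f)|$ that exploits the multiplicity cap $\omega(y)\le k$. Theorem~\ref{thm:upperbound} could be invoked instead, but since it only sees $N(f)$ it ignores this cap and would produce a strictly weaker estimate.

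First, I would reduce to the case $\omega(0)=k$. Translating by a constant preserves the Walsh spectrum up to signs, the image size, the value of $N(f)$, and the multiplicity sequence. So replacing $f$ by $g(x)=f(x)+c$, where $c$ is any element with $\omega_f(c)=k$, turns the hypothesis into $\omega_g(0)=k$, and $g$ is still almost bent. Lemma~\ref{lem:AB} then applies to $g$, and the non-negativity of $N_-(g)$ yields
\[
N(f) \;\geq\; 2^n + (k-1)\,2^{(n+1)/2}.
\]

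Next, I would exploit the cap $\omega(y)\le k$ through the elementary observation that for $y\in\Image(f)$ one has $0\le \omega(y)-1\le k-1$, so $(\omega(y)-1)^{2}\le (k-1)(\omega(y)-1)$. Summing over $\Image(f)$, using $\sum_{y\in\Image(f)}\omega(y)=2^n$ and $|\Image(f)|=2^n-J$, the left-hand sum evaluates to $N(f)-2^n-J$ and the right-hand sum to $(k-1)J$, so
\[
N(f)-2^n \;\leq\; k\,J.
\]
Combining the two inequalities produces $kJ\geq (k-1)\,2^{(n+1)/2}$, i.e. $J\geq \tfrac{k-1}{k}\,2^{(n+1)/2}$, which rearranges to the claimed bound. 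The subsidiary estimate~\eqref{eq:ABupperbound} then follows at once, since $f$ being non-bijective forces $k\geq 2$ and hence $\tfrac{k-1}{k}\geq \tfrac12$.

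I do not anticipate any substantial obstacle. The one step that could easily be missed is the translation in Step~1: Lemma~\ref{lem:AB} is stated using $\omega(0)$, so one has to pass from $f$ to $f(x)+c$ in order to convert the hypothesis ``some element has $k$ preimages'' into the lower bound on $N(f)$ that the argument needs.
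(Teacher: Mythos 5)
Your proposal is correct and follows essentially the same route as the paper: translate so that the element with $k$ preimages sits at $0$, use $N_-\geq 0$ from Lemma~\ref{lem:AB} to get $N(f)-2^n\geq (k-1)2^{(n+1)/2}$, and combine with the cap $\omega(y)\leq k$ to bound $2^n-|\Image(f)|$ from below. Your inequality $(\omega(y)-1)^2\leq(k-1)(\omega(y)-1)$ summed over the image is algebraically identical to the paper's $\sum_r r(r-1)M_r\leq k\sum_r(r-1)M_r$, so the two arguments coincide.
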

\begin{proof}
	By Eq.s~\eqref{eq_rMr} and \eqref{eq_r2Mr}
		\[N(f)-2^n = \sum_{r=1}^k r(r-1)M_r \leq k \sum_{r=1}^k (r-1)M_r,\]
	implying
	\begin{equation}
		\sum_{r=1}^k (r-1)M_r \geq \frac{N(f)-2^n}{k}.
	\label{eq:imagesizeupper}
	\end{equation}
	Set $f'(x) = f(x) - c$ with $\omega(c)=k$. Clearly, $f'$ is also almost bent and it satisfies $N(f) = N(f')$ and $|\Image(f)| = |\Image(f')|$, and additionally for $f'$ we have $\omega(0) =k$. We apply Lemma~\ref{lem:AB} to $f'$. Then
	\[0 \leq N_- = N(f)/4 -2^{n-2}-(k-1)2^{(n-3)/2}, \]
	which leads to
	\[N(f) - 2^n \geq (k-1)2^{(n+1)/2}.\] 
	Then, using Eq.~\eqref{eq:imagesizeupper},
	\begin{align*}
		|\Image(f)| &= \sum_{r=1}^k M_r = \sum_{r=1}^k r M_r - \sum_{r=1}^k (r-1) M_r \\
		&= 2^n - \sum_{r=1}^k (r-1) M_r \leq 2^n- \frac{N(f)-2^n}{k} \leq 2^n - \frac{k-1}{k}2^{(n+1)/2}.
	\end{align*}

	If $f$ is not a permutation, then $k>1$ and $\frac{k-1}{k}\geq 1/2$, completing the proof.
\qed
\end{proof}

\begin{remark}
\begin{enumerate}
	\item From Theorem~\ref{thm:ABupperbound}, it is clear that almost bent maps that satisfy the bound  in \eqref{eq:ABupperbound} with equality must satisfy $\max\{\omega(a) | a\in\F_{2^n}\}=2$. For such a map we then necessarily have $M_1(f)= 2^n-2^{(n+1)/2}$ and $M_2(f)=2^{(n-1)/2}$. However we believe that the bound is not sharp.
	\item {The bound of Theorem~\ref{thm:ABupperbound} is similar in style to the well-known general upper bound on the image size of  maps by Wan~\cite{wanbound}, stating that if  $f \colon \F_{2^n} \rightarrow \F_{2^n}$ is not bijective then 
	\[|\Image(f)| \leq 2^n-\frac{2^n-1}{d},\]
	where $d$ is the degree of $f$. Another bound similar to Wan's bound  
	is: If  $f \colon \F_{2^n} \rightarrow \F_{2^n}$ is not bijective 
	and has index $l$ then 
	\[|\Image(f)| \leq 2^n-\frac{2^n-1}{l},\]
	see \cite{wang-value-set} for more details and the definition of the index of maps. For almost bent maps with known small degree or index, these upper bounds are stronger
	than the one in Theorem~\ref{thm:ABupperbound}.}
\end{enumerate}
\end{remark}

In even dimension, it is well known that maps with bent component functions cannot be permutations since bent functions are never balanced. Using Theorem~\ref{thm:upperbound}, we present an upper bound on the image size of a map depending on the number of bent component functions. This also yields an upper bound for the image size of component-wise plateaued APN maps in even dimension since such maps have always many bent component functions.

\begin{theorem} \label{thm:bentupperbound}
	Let $n$ be even and $f\colon \F_{2^n} \rightarrow \F_{2^n}$ be a map with $t$ bent component functions. Then $N(f) \geq t+2^n$ and
		\[|\Image(f)| \leq 2^n-\frac{1}{2}(\sqrt{4t+1}-1).  \]
\end{theorem}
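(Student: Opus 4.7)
The plan is to derive the bound $N(f) \geq t + 2^n$ via the standard identity linking $N(f)$ with the squared Walsh values $W_f(b,0)^2$, and then to plug this lower estimate into Theorem~\ref{thm:upperbound} to obtain the image-size bound essentially for free.

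First I would compute, by the standard double-counting argument already used in the proof of Lemma~\ref{lem:AB}, the identity
\[
\sum_{b \in \F_{2^n}} W_f(b,0)^2
= \sum_{b \in \F_{2^n}} \sum_{x,y \in \F_{2^n}} (-1)^{\Tr(b(f(x)+f(y)))}
= 2^n\, N(f).
\]
Separating the trivial term $W_f(0,0)^2 = 2^{2n}$ yields
\[
\sum_{b \in \F_{2^n}^*} W_f(b,0)^2 = 2^n\, N(f) - 2^{2n}.
\]

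Next I use the defining property of a bent component: if $f_b$ is bent then $|W_f(b,a)| = 2^{n/2}$ for every $a\in\F_{2^n}$, and in particular $W_f(b,0)^2 = 2^n$. Since there are $t$ such indices $b$ and every other term in the sum above is non-negative, we obtain
\[
2^n\, N(f) - 2^{2n} \;=\; \sum_{b \in \F_{2^n}^*} W_f(b,0)^2 \;\geq\; t\cdot 2^n,
\]
which rearranges to $N(f) \geq t + 2^n$, giving the first claim.

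Finally, since the right-hand side of Theorem~\ref{thm:upperbound} is monotonically decreasing in $N(f)$, substituting $N(f) \geq t + 2^n$ gives
\[
|\Image(f)| \leq 2^n - \tfrac{1}{2}\bigl(\sqrt{4N(f) - 2^{n+2}+1} - 1\bigr)
\leq 2^n - \tfrac{1}{2}\bigl(\sqrt{4(t+2^n) - 2^{n+2}+1} - 1\bigr)
= 2^n - \tfrac{1}{2}\bigl(\sqrt{4t+1} - 1\bigr),
\]
where the cancellation $4\cdot 2^n - 2^{n+2} = 0$ cleans up the expression. There is no real obstacle here; the only thing to check carefully is that the quantity under the square root in Theorem~\ref{thm:upperbound} is non-negative (which is automatic as soon as $t\geq 0$, since $N(f)\geq 2^n$ always holds), so the substitution is legitimate.
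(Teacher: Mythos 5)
Your proposal is correct and follows essentially the same route as the paper: the identity $2^n N(f)=\sum_b W_f(b,0)^2$, the contribution $2^n$ from each bent component to get $N(f)\geq t+2^n$, and then substitution into Theorem~\ref{thm:upperbound}. The only difference is that you spell out the monotonicity and the cancellation $4\cdot 2^n-2^{n+2}=0$ explicitly, which the paper leaves implicit.
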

\begin{proof}
	We use again the relation
	\[2^n N(f) = \sum_{b \in \F_{2^n}} W_f(b,0)^2 = 2^{2n}+\sum_{b \in \F_{2^n}^*} W_f(b,0)^2.\]
	If $x \mapsto \Tr(bf(x))$ is bent, then $W_f(b,0)^2 = 2^n$, so 
		\[2^nN(f) \geq 2^{2n}+t \cdot 2^n,\]
	implying $N(f) \geq t +2^n$. 
	The remaining  follows from Theorem~\ref{thm:upperbound}. 
	\qed
\end{proof}

Theorems~\ref{thm:ABupperbound} and~\ref{thm:bentupperbound} yield an upper bound on the image size for  component-wise plateaued APN maps.

\begin{theorem}
Let $f:\F_{2^n} \to \F_{2^n}$ be a component-wise plateaued APN map, and non-bijective if $n$ is odd. Then
			\[|\Image(f)| \leq 
			\begin{cases}
		   2^n - 2^{(n-1)/2}  & n \text{ is odd,} \\
			2^n-\frac{1}{2}(\sqrt{\frac{8}{3}(2^n-1)+1}-1)< 2^n-\sqrt{\frac{2}{3}(2^n-1)}+1/2 & n \text{ is even}.
		\end{cases}
			  \]
\end{theorem}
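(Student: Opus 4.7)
The plan is to handle the odd and even cases separately, in each case first pinning down the Walsh spectrum of $f$ and then invoking one of the upper bounds proved earlier in this section (Theorems~\ref{thm:ABupperbound} and~\ref{thm:bentupperbound}). The key preliminary is the fourth-moment identity
\[
\sum_{b \in \F_{2^n}^*}\sum_{a \in \F_{2^n}} W_f(b,a)^4 = 2^{3n+1}(2^n-1),
\]
valid for every APN $f$. It follows by expanding the left-hand side with characters into $2^{2n}\cdot|\{(x,y,z,w):x+y+z+w=0,\,f(x)+f(y)+f(z)+f(w)=0\}|$ and noting that for an APN map the only solutions are trivial, so this multiset has size $3\cdot 2^{2n}-2^{n+1}$; subtracting the $b=0$ contribution $2^{4n}$ yields the identity. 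For a component-wise plateaued $f$ with amplitudes $t_b$, Parseval forces exactly $2^{n-t_b}$ of the values $W_f(b,a)$ to be nonzero with absolute value $2^{(n+t_b)/2}$, hence $\sum_a W_f(b,a)^4 = 2^{3n+t_b}$; substituting gives
\[
\sum_{b\ne 0} 2^{t_b} = 2(2^n-1),
\]
and integrality of $W_f(b,a)$ forces $t_b \equiv n \pmod{2}$.

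For $n$ odd every $t_b$ is odd and at least $1$, so the $2^n-1$ summands $2^{t_b}$ are each at least $2$; equality in the displayed identity then forces $t_b = 1$ throughout, i.e., $f$ is almost bent. Since $f$ is assumed non-bijective, Theorem~\ref{thm:ABupperbound} yields $|\Image(f)| \leq 2^n - 2^{(n-1)/2}$.

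For $n$ even every $t_b$ is even. Writing $N_j$ for the number of $b\ne 0$ with $t_b = 2j$, the identities $\sum_j N_j = 2^n-1$ and $\sum_j 4^j N_j = 2(2^n-1)$ combine to give $\sum_{j\geq 1}(4^j-1)N_j = 2^n-1$; since $4^j-1 \geq 3$ for $j\geq 1$ with equality only at $j=1$, any nonzero $N_j$ with $j\geq 2$ would make the left-hand side exceed the right. Hence $N_j = 0$ for $j \geq 2$, $N_1 = (2^n-1)/3$, and $f$ has exactly $t := (2/3)(2^n-1)$ bent components. Applying Theorem~\ref{thm:bentupperbound} with this value of $t$ yields
\[
|\Image(f)| \leq 2^n - \tfrac{1}{2}\bigl(\sqrt{\tfrac{8}{3}(2^n-1)+1}-1\bigr),
\]
and the cleaner estimate $2^n - \sqrt{(2/3)(2^n-1)} + 1/2$ follows at once from $\sqrt{(8/3)(2^n-1)+1} \geq 2\sqrt{(2/3)(2^n-1)}$.

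The main technical step is the fourth-moment calculation that pins down the Walsh spectrum of a component-wise plateaued APN map; once it is in hand, the theorem falls out mechanically from Theorems~\ref{thm:ABupperbound} and~\ref{thm:bentupperbound}, so I expect no serious obstacle beyond a careful book-keeping of the amplitude constraints.
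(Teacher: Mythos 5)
Your overall strategy is sound and your fourth-moment computation is correct: for an APN map the relevant quadruple count is indeed $3\cdot 2^{2n}-2^{n+1}$, for a component-wise plateaued map Parseval gives $\sum_a W_f(b,a)^4 = 2^{3n+t_b}$, and hence $\sum_{b\neq 0}2^{t_b}=2(2^n-1)$ with $t_b\equiv n\pmod 2$. For $n$ odd this forces $t_b=1$ for every $b$, i.e.\ $f$ is almost bent, and Theorem~\ref{thm:ABupperbound} finishes that case. Note that this is a self-contained derivation of facts the paper simply cites: its proof is two lines long, invoking the known results that component-wise plateaued APN maps are almost bent for $n$ odd and have at least $(2/3)(2^n-1)$ bent components for $n$ even (Berger et al., \cite[Corollary 3]{berger2006almost}). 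Your route re-proves these from scratch, which costs more work but makes the argument independent of external references.

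There is, however, a genuine error in your even case. From $\sum_j N_j = 2^n-1$ and $\sum_j 4^jN_j = 2(2^n-1)$ you correctly get $\sum_{j\geq 1}(4^j-1)N_j = 2^n-1$, but this does \emph{not} force $N_j=0$ for $j\geq 2$: the equation admits solutions with $N_2>0$ (for instance $N_2=1$ and $N_1=(2^n-16)/3$), and the conclusion you draw is in fact false in general --- there exist quadratic, hence component-wise plateaued, APN maps in even dimension with non-classical Walsh spectrum, as the paper itself notes with a reference to Beierle and Leander. What the identity does yield is $3\sum_{j\geq 1}N_j \leq \sum_{j\geq 1}(4^j-1)N_j = 2^n-1$, hence $N_0 \geq (2/3)(2^n-1)$, i.e.\ \emph{at least} $(2/3)(2^n-1)$ bent components. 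This weaker statement suffices for the theorem, because the upper bound of Theorem~\ref{thm:bentupperbound} is decreasing in $t$, so applying it with $t=(2/3)(2^n-1)$ is legitimate whenever the true count is at least that large. With this one correction your proof is complete.
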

\begin{proof}
The statement for $n$ odd follows from Theorems~\ref{thm:ABupperbound}, since every component-wise plateaued APN map 
is almost bent.
The upper bound for $n$ even is a direct consequence from Theorem \ref{thm:bentupperbound} and the fact that a component-wise plateaued APN map has at least $(2/3)(2^n-1)$ bent component functions \cite[Corollary 3]{berger2006almost}.
	\qed
\end{proof}

\begin{acknowledgement} {
We thank our colleagues for all the comments which help us to improve the presentation of this paper.
Our special thank is to Zeying Wang for pointing us an inaccuracy in the earlier version of Theorem \ref{th:2.7}  and consequently in Corollary \ref{cor:apn-1-2}.
 We thank Steven Wang for bringing to our
attention references \cite{wanbound,wang-value-set}. }

\end{acknowledgement}

\end{document}